\documentclass[11pt,reqno,a4paper]{amsart}

\usepackage[utf8]{inputenc}
\usepackage{cancel}
\usepackage[margin=0.8in]{geometry}
\usepackage[usenames]{color}
\usepackage{amsmath,amsthm,pdfsync,verbatim,graphicx,epstopdf,enumerate,amssymb}
\usepackage[colorlinks=true]{hyperref}
\usepackage{float}
\usepackage{cancel,cite}
\usepackage[framemethod=tikz]{mdframed}
\usepackage{mathtools}
\mathtoolsset{showonlyrefs}

\hypersetup{allcolors=blue}
\numberwithin{equation}{section}

\newcommand{\lb}{\left(}

\newcommand{\rb}{\right)}
\newcommand{\PD}{\partial}

\newcommand{\Beq}{\begin{equation}}
	\newcommand{\Eeq}{\end{equation}}
\newcommand{\beq}{\begin{equation*}}
	\newcommand{\eeq}{\end{equation*}}
\newcommand{\bal}{\begin{align}}
	\newcommand{\eal}{\end{align}}

\newcommand{\bp}{\begin{prob}}
	\newcommand{\ep}{\end{prob}}
\newcommand{\bpr}{\begin{proof}}
	\newcommand{\epr}{\end{proof}}
\renewcommand{\o}{\omega}

\newcommand{\tblue}[1]{{\color{blue}{#1}}}

\newcommand{\bel}[1]{\begin{equation}\label{#1}}
	\newcommand{\ee}{\end{equation}}
\newcommand{\norm}[1]{|{#1}|}

\allowdisplaybreaks
\newtheorem{theorem}{Theorem}[section]

\newtheorem{lemma}[theorem]{Lemma}
\newtheorem{proposition}[theorem]{Proposition}

\theoremstyle{definition}

\newtheorem{remark}[theorem]{Remark}

\newcommand{\D}{\mathrm{d}}
\newcommand{\Lc}{\mathcal{L}}
\newcommand{\Rb}{\mathbb{R}}

\newcommand{\A}{\alpha}

\newcommand{\Bb}{\mathbb{B}}
\usepackage[usenames]{color}
\usepackage{amsmath,pdfsync,verbatim,graphicx,epstopdf,enumerate}

\newcommand{\wt}{\widetilde}

\newcommand{\Rc}{\mathcal{R}}

\newcommand{\Sb}{\mathbb{S}}
\newcommand{\Sn}{\mathbb{S}^{n-1}}

\renewcommand{\o}{\omega}
 \newcommand{\pcc}[1]{\begin{quotation}\textbf{\color{teal}Pradipta's comment:\
 		}{\color{teal}\textit{#1}}\end{quotation}}

\title[A constructive approach to range characterization of SMT odd dimensions]{A constructive approach to range characterization of spherical mean transform in odd dimensions} 

\begin{document}
\subjclass{33C55, 35R30, 44A12, 44A15, 44A20, 45Q05, 92C55}
\keywords{Spherical Radon transform; spherical harmonics; inversion; thermoacoustic tomography}
\author[Chatterjee, Krishnan and Tushir]{Pradipta Chatterjee, Venkateswaran P.\ Krishnan and Abhilash Tushir}
\address{
	Tata Institute of Fundamental Research, Centre For Applicable Mathematics
    \endgraf Bangalore, Karnataka 560065, India
	\endgraf
    \it Email: \tt pradipta22@tifrbng.res.in, vkrishnan@tifrbng.res.in, abhilash2296@gmail.com 
}
\maketitle
\begin{abstract} This work focuses on range characterization of spherical mean transform (SMT) in odd-dimension Euclidean space. In a recent work, a simpler description characterizing the range of SMT in odd dimensions was derived. This involved symmetry relations involving certain linear ordinary differential operators acting on the coefficients of the spherical harmonics expansion of the function in the range. The sufficiency part of this characterization was shown using an existing range characterization result due to Agranovsky-Kuchment-Quinto. In the current work, we provide a direct and constructive proof showing the sufficiency of the aforementioned symmetry conditions and we obtain this without invoking any of the existing range characterization results. As an added advantage, given a function $g$ satisfying the range conditions, our approach provides an explicit construction of a function $f$  such that the SMT of $f$ is $g$. 
\end{abstract}
\section{Introduction and main results}

The spherical mean transform, commonly known as spherical Radon transform (SMT), maps a function to its integrals over spheres in $\mathbb{R}^n$. More precisely, given a 
function $f \in C_c^\infty (\Bb)$, where $\Bb$ is the unit ball in $\Rb^n$,  the \textit{spherical mean transform} $\Rc$ is defined as 
\begin{equation*}
    \Rc f (p, t) = \frac{1}{\o_n} \int\limits_{\Sn} f(p+t\theta) \, \D S(\theta), \quad (p,t)\in \Sb^{n-1}\times (0,\infty).
\end{equation*}
Here $\o_n$ denotes the surface area of $\Sn$ and $\D S$ denotes the surface measure on it. 
Due to the support restriction on $f$, $\Rc f (\cdot, t) = 0$ for $t \geq 2$. Thus we have $\Rc: C_c^\infty(\Bb) \to C_c^\infty (\Sn \times (0,2))$.

The SMT has a long and rich history,  naturally arising in the study of partial differential equations, particularly the wave and Euler--Poisson--Darboux equations; see \cite{CH_Book,John-book,Rhee}. Since then, the transform has played an important role in approximation theory; see \cite{agranovsky1996approximation,Eknaraynan2007,EKnarayanan2009} and functional analysis; see \cite{agranovsky1996injectivity,Rama2009,Rama2011}. In addition, it has become a fundamental tool in integral geometry with various applications in inverse problems, particularly in thermoacoustic and photoacoustic tomography; see \cite{Agranovsky:1999,AQ2, AQ3, Finch_2006,And,Denisjuk1999,Kuchment-Kunyansky-TAT,Kruger:1995}. A fundamental inverse problem in these applications is the reconstruction of the function from its spherical averages.  Due to its formally overdetermined nature, it is typically studied under suitable geometric constraints, such as restricting the centers of the spheres to a prescribed hypersurface or limiting the range of admissible radii; see \cite{Finch-P-R,Palamodov2016,ambartsoumian2015inversion,Ambartsoumian-Zarrad-Lewis,aramyan2020recovering,Finch-Haltmeir-Rakesh_even-inversion,CKT,K,R,Salman:2014,ambartsoumian2014exterior,nguyen2009family,norton1980reconstruction}. The problem of inversion from restricted data has also been extensively investigated; see \cite{Ambartsoumian2018,Ambartsoumian2015,CKT,Salman,Kuchment-Kunyansky-TAT,DK}. A distinctive feature of the inversion formulas for SMT is its dependence on the  ambient dimension. In odd dimensions, reconstruction involves local inversion formulas, whereas in even dimensions the inversion formula is  non-local, typically involving singular or integral operators. The characterization of injectivity sets for the spherical mean transform in various classes of functions is a fundamental problem as well that has attracted considerable attention; see \cite{agranovsky1996injectivity,ref:AmbKuch,Agronovasky96,Clark1999}.

In the study of the spherical Radon transform, besides investigating inversion formulas and injectivity sets, it is desirable to have a description of the range of the transform. Apart from theoretical interest, a complete description of the range is valuable in applications since the measured data can be noisy or have missing parts due to restrictions in measurements or having inaccessible boundaries from which measurements are not possible. In such instances, the knowledge of the range of the transform can help suppress the noise or fill in the missing data \cite{Natterer_book,Anastasio2001FBP,SKPatch_2004,Nat83}. 

The range characterization of the spherical mean transform has been extensively studied over the past few decades. The current work is a follow-up of the article \cite{AAKN1}, where a comprehensive survey of the literature on the range characterization of the spherical mean transform is already provided; we refer the reader to this article for the relevant references. The literature contains several range characterization results for the spherical mean transform; see \cite{Agranovsky-Finch-Kuchment-range,Finch_2006,Agranovsky-Kuchment-Quinto,AN,ref:AmbKuch-range,Linh,KK,AAKN1}. Among these, the recent characterization established in \cite{AAKN1} serves as the starting point of the current work because of its simple and transparent formulation. This range characterization is particularly well suited for constructive proofs, which is the main focus of this paper.

We summarize the range characterization results from \cite{AAKN1} now. 
Given a function $g\in C_c^{\infty}(\Sb^{n-1}\times (0,2))$, let us expand it in spherical harmonics: 

\begin{equation*}
     g(\theta,t)=\sum_{q=0}^{\infty}\sum_{s=1}^{d_{q}} g_{q,s}(t)Y_{q,s}(\theta),
\end{equation*}
where $g_{q,s}\in C_{c}^{\infty}((0,2))$,\begin{equation}\label{dqandfqs}
 \quad d_{q}=\frac{(2q+n-2)(n+q-3)!}{q!(n-2)!},\quad d_{0}=1,\quad \text{and}\quad g_{q,s}(t)=\int_{\mathbb{S}^{n-1}}g(t\theta)\overline{Y}_{q,s}(\theta)\D \theta.
\end{equation} 

\begin{theorem}[Range characterization for SMT of general functions]\label{generalrange}
    \cite{AAKN1} Let $\Bb$ denote the unit ball in $\Rb^n$ for an odd
$n\geq 3$, and $k := \frac{n-3}{2}$. A function $g\in C_c^{\infty}(\Sb^{n-1}\times (0, 2))$ is representable as $g= \mathcal{R}f$ for $f\in C_c^{\infty}(\Bb)$
if and only if for each $(q, s), q\geq 0, 1 \leq s \leq d_{q}, h_{q,s}(t) = t^{n-2}g_{q,s}(t)$ satisfies the following two consistency 
conditions:
\begin{itemize}
\item there is a function $\phi_{q,s}\in C_c^{\infty}((0,2))$ such that
\begin{equation}\label{Gencond1}
    h_{q,s}(t) =D^q\phi_{q,s}(t), \quad D=\frac{1}{t}\frac{\D}{\D t},
\end{equation}
\item the function $\phi_{q,s}(t)$ satisfies
\begin{equation}\label{Gencond2}
    \left[\mathcal{L}_{q+k} \phi_{q,s}\right](1-t)=\left[\mathcal{L}_{q+k}\phi_{q,s}\right](1+t),
\end{equation}
where $\mathcal{L}_k$ is the linear differential operator of order $k$:
\begin{equation}\label{lkh:operator}
    \mathcal{L}_k=\sum_{l=0}^k \frac{(k+l)!}{(k-l)!l!2^l}(1-t)^{k-l} D^{k-l},
\end{equation}
and $\left[\mathcal{L}_k h\right](\cdot)$ denotes evaluation of the function $\mathcal{L}_k h$ at the given point.
\end{itemize}
\end{theorem}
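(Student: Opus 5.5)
The plan is to prove necessity by an explicit computation on each spherical harmonic. For sufficiency, I will invert that computation explicitly, which gives the constructive formula for $f$. Write $f(x)=\sum_{q,s} f_{q,s}(r)Y_{q,s}(\omega)$ with $x=r\omega$. By the Funk--Hecke theorem applied on the sphere $|x-p|=t$ with $|p|=1$, each coefficient becomes a one-dimensional integral:
\begin{equation*}
g_{q,s}(t)=\frac{c_n}{t^{n-2}}\int_{|1-t|}^{1+t} f_{q,s}(r)\,r^{n-2}\,C_q(x)\,(1-x^2)^{k}\,\frac{\D r}{r},\qquad x=\frac{1+r^2-t^2}{2r}.
\end{equation*}
Here $C_q$ is the normalized Gegenbauer polynomial attached to $\Sb^{n-1}$ and $k=\frac{n-3}{2}$. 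Since $n$ is odd, $k$ is an integer, so the kernel is a polynomial in $t^2$, and $h_{q,s}=t^{n-2}g_{q,s}$ is a finite-interval integral with polynomial kernel.

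For necessity I will use the Rodrigues formula $C_q(x)(1-x^2)^{k}=c_{q}\frac{\D^q}{\D x^q}(1-x^2)^{q+k}$. Since $\partial x/\partial t=-t/r$, we have $\frac{\D}{\D x}=-r\,D$ with $D=\frac1t\frac{\D}{\D t}$. The factor $(1-x^2)^{q+k}$ vanishes to order $q+k$ at both endpoints $r=|1\pm t|$. Hence the $q$ differentiations in $t$ can be pulled outside the integral with no boundary terms, giving $h_{q,s}=D^q\phi_{q,s}$ with
\begin{equation*}
\phi_{q,s}(t)=c\int_{|1-t|}^{1+t} f_{q,s}(r)\,r^{n-2-q}\Big(\frac{(t^2-(1-r)^2)((1+r)^2-t^2)}{4r^2}\Big)^{q+k}\D r .
\end{equation*}
If $\operatorname{supp} f\subset\{|x|\le 1-\varepsilon\}$, this $\phi_{q,s}$ is smooth and supported in $[\varepsilon,2-\varepsilon]$, which proves \eqref{Gencond1}. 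For \eqref{Gencond2}, set $m=q+k$ and expand the $m$-th power in the factor $(1+r)^2-t^2$ around $t=1$. I will check by direct computation with $D$ that $\mathcal{L}_m$ is exactly the operator that removes the dependence on $(1+r)^2-t^2$. The result should be $[\mathcal{L}_m\phi_{q,s}](t)=c'\int_{|1-t|}^{1}f_{q,s}(r)\,r^{a}\,(r^2-(1-t)^2)^{m}\,\D r$ for some exponent $a=a(q,n)$. This expression depends only on $(1-t)^2$, hence is invariant under $1-t\leftrightarrow 1+t$. Verifying this identity, i.e.\ matching the coefficients $\frac{(m+l)!}{(m-l)!l!2^l}$, is a combinatorial lemma that I would isolate and prove by induction on $m$.

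For sufficiency, start from $\phi_{q,s}$ satisfying \eqref{Gencond1}--\eqref{Gencond2}. The symmetry lets me write $[\mathcal{L}_{m}\phi_{q,s}](t)=\Psi_{q,s}(\rho)$ with $\rho=|1-t|\in[0,1]$, and $\Psi_{q,s}$ vanishes for $\rho\ge 1-\varepsilon$. Treat $\Psi_{q,s}(\rho)=\int_\rho^1 F(r)(r^2-\rho^2)^m\,\D r$ as an equation for the unknown $F(r)=c'f_{q,s}(r)r^{a}$. Applying $\big(\frac{1}{2\rho}\frac{\D}{\D\rho}\big)^{m}$ and then one more $\rho$-derivative gives $F(\rho)=c''\rho^{-1}\big(\tfrac{1}{\rho}\tfrac{\D}{\D\rho}\big)^{m}\Psi_{q,s}(\rho)$ up to sign. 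This defines $f_{q,s}$ explicitly, and then $f:=\sum f_{q,s}Y_{q,s}$. By construction $f_{q,s}$ vanishes for $r>1-\varepsilon$. Running the necessity computation in reverse shows $\mathcal{L}_m$ applied to the new $\phi$ reproduces $\Psi_{q,s}$. I also need injectivity of $\mathcal{L}_m$ on functions vanishing near $t=0$: it is an ODE in $D$ with leading coefficient $(1-t)^m\neq0$ near $0$, so zero data propagates up to $t=1$, and this should extend past $t=1$ using \eqref{Gencond1}. Together these give $\mathcal{R}f$ having coefficients $g_{q,s}$.

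The main obstacle is showing that $f$ so defined lies in $C_c^\infty(\Bb)$. Each $f_{q,s}$ must be smooth with $f_{q,s}(r)=r^q\cdot(\text{smooth even function})$ at $r=0$, so that $f_{q,s}Y_{q,s}$ is smooth at the origin. I plan to get this from the fact that $\Psi_{q,s}$ comes from $\phi_{q,s}$, which vanishes near $t=0$, i.e.\ near $\rho=1$. The identity $\Psi(\rho)$ at small $\rho$ involves moments of $F$, and the factor $r^{-q}$ should come from the $D^q$ structure in \eqref{Gencond1}. Separately, the series must converge in $C^\infty$. For this I will bound $\sup|\partial_r^j f_{q,s}|$ by finitely many $t$-derivatives of $g_{q,s}$ times a polynomial in $q$. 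Integration by parts with the Laplace--Beltrami operator on $\Sb^{n-1}$ shows $g_{q,s}$ decays faster than any power of $q$, which should dominate the polynomial growth of the constants in the explicit inversion. Tracking this polynomial-in-$q$ growth through the $m=q+k$ differentiations is the step I expect to require the most care.
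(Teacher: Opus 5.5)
\textbf{Verdict: genuine gap.} The identity for $\Lc_m\phi_{q,s}$ that both your necessity and your sufficiency arguments rest on is false for every $m\ge1$.

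\textbf{Necessity.} Your Funk--Hecke/Rodrigues derivation of \eqref{Gencond1} is sound up to bookkeeping. Funk--Hecke gives $r^{n-2}\,\D r$, not $r^{n-2}\,\D r/r$. Then $\partial_x=-rD$ produces the weight $r^{n-2+q}(1-x^2)^{q+k}$. So $\phi_{q,s}(t)=c\int_{|1-t|}^1 uF(u)Q(t,u)^m\,\D u$ with $F=u^{-q}f_{q,s}$ and $m=q+k$, as in \eqref{phi(q,s)(t)}.

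To see that the kernel identity fails, take $m=1$. Then $\Lc_1=(1-t)D+1$, $DQ=4(1+u^2-t^2)$, and no boundary terms arise, so
\[
[\Lc_1\phi](t)=\int_{|\beta|}^{1}uF(u)\bigl[4(u^2+\beta^2)-(u^2-\beta^2)^2\bigr]\,\D u,\qquad \beta=1-t.
\]
This kernel is even in $\beta$. But on the diagonal $u=|\beta|$ it equals $8\beta^2\neq0$, so it is not of the form $u^a(u^2-\beta^2)^m$. In general, write $Q=4u^2-A^2$ with $A=1+u^2-t^2$, so that $D=-2\,\D/\D A$. One then finds $\Lc_m[Q^m]=m!\,8^m\beta^{2m}$ at $u=|\beta|$. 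So the lemma you plan to prove by induction is false.

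What necessity of \eqref{Gencond2} actually needs is the weaker fact that $\Lc_m[Q(\cdot,u)^m]$ is a polynomial in $(1-t)^2$. That is the necessity half of Theorem \ref{RC:Radial} with $k$ replaced by $m$, and it still has to be proved. The paper itself does not reprove necessity; it takes it from \cite{AAKN1}.

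\textbf{Sufficiency.} Here the damage is worse. With the true kernel $K_m(u,\rho)$, the equation $\Psi(\rho)=\int_\rho^1 uF(u)K_m(u,\rho)\,\D u$ is not an Abel equation. Your formula for $F$ does not solve it, and the ``reversed necessity computation'' does not return $\Psi$. Differentiating once and dividing by the diagonal value gives
\[
F(\rho)=\frac{\int_\rho^1 uF(u)\,\partial_\rho K_m(u,\rho)\,\D u-\Psi'(\rho)}{m!\,8^m\rho^{2m+1}},
\]
a Volterra equation of the second kind that is singular at $\rho=0$.

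Regularity of $F$ at the origin is therefore the real difficulty. In your Abel picture it came for free; your sketch (moments of $F$, the $D^q$ structure) does not supply it.

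\textbf{How the paper handles it.} For each $(q,s)$ it applies the radial construction, with $k$ replaced by $m=k+q$, to $\phi_{q,s}$; see \eqref{wtfqs(t)}. It sets $f_{q,s}=t^q\wt f_{q,s}$, so the factor $r^q$ is built in. The candidate has the form $M(t)/t^{2m+1}$ with $M$ smooth and odd. The paper then:
\begin{itemize}
\item proves that the candidate reproduces $\phi_{q,s}$, using the inversion ODE \eqref{ODE:op}, the operator $T$ and Lemma \ref{mainlemma}; the symmetry condition enters here;
\item uses the symmetry condition a second time to get $\bigl[\frac{\D}{\D t}D^{2m}\phi_{q,s}\bigr](1)=0$, which forces $M$ to vanish to order $2m+1$ at $0$.
\end{itemize}

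\textbf{What carries over.} Your ODE-uniqueness step is sound in itself. It is cleanest to propagate $\Lc_m(\phi-\wt\phi)=0$ from $t=0$ and from $t=2$, where both functions vanish, rather than across the singular point $t=1$. Your convergence plan matches the paper's. The paper keeps a fixed number of derivatives on $h_{q,s}$ by moving the rest onto $Q^{k+q-1}$, and then uses the Gegenbauer sup bound.
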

If we assume that $g$ is independent of the angular coordinate, then the above theorem simplifies to the following result: 

\begin{theorem}[Range characterization for SMT of radial functions] \cite{AAKN1}\label{RC:Radial} Let $\mathbb{B}$ denote the unit ball in $\mathbb{R}^n$ for an odd $n \geq 3$, and $k:=(n-3) / 2$. A function $g \in C_c^{\infty}((0,2))$ is representable as $g=\mathcal{R} f$ for a radial function $f \in C_c^{\infty}(\mathbb{B})$ if and only if $h(t):=t^{n-2} g(t)$ satisfies
\begin{equation}\label{rangeee}
    \left[\mathcal{L}_k h\right](1-t)=\left[\mathcal{L}_k h\right](1+t), \quad \text { for all } t \in[0,1]
\end{equation}
where $\mathcal{L}_k$ is the operator in \eqref{lkh:operator}.

\end{theorem}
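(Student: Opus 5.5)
We prove Theorem \ref{RC:Radial} constructively, treating necessity and sufficiency separately and working entirely with one-variable integral transforms.

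\emph{Reduction to one variable.} For radial $f(x)=F(|x|)$ and $|p|=1$, polar coordinates centred at $p$ together with the law of cosines $|p+t\theta|^2=1+t^2+2t\cos\phi$ give
\begin{equation*}
\Rc f(p,t)=\frac{\o_{n-1}}{\o_n}\int_{-1}^{1} F\big(\sqrt{1+t^2+2tu}\big)(1-u^2)^{\frac{n-3}{2}}\,\D u .
\end{equation*}
Substitute $r^2=1+t^2+2tu$, so that $r\in(|1-t|,1+t)$. Then $1-u^2=\big((1+t)^2-r^2\big)\big(r^2-(1-t)^2\big)/(4t^2)$, and since $k=(n-3)/2$ is an integer,
\begin{equation*}
h(t)=t^{n-2}g(t)=c_n\int_{|1-t|}^{1+t}F(r)\,r\,\big[((1+t)^2-r^2)(r^2-(1-t)^2)\big]^{k}\,\D r .
\end{equation*}
The integrand is a polynomial in $t$ and $r$, with no singular kernel. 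Because $F$ vanishes near $r=1$, the lower limit may be taken to be $1-t$ for all $t\in(0,2)$.

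\emph{Necessity.} Write $a=1-t$ and $b=1+t$, and let $\Phi(a,b)=\int_a^b F(r)\,r\,[(b^2-r^2)(r^2-a^2)]^k\,\D r$. This function is symmetric in $a$ and $b$ up to the sign $(-1)^{k+1}$. I expect $\mathcal{L}_k$ to be exactly the operator that converts $h$, written in this form, into an expression manifestly symmetric under $t\mapsto 2-t$ (equivalently $a\leftrightarrow b$ after the shift in \eqref{rangeee}). The coefficients $\frac{(k+l)!}{(k-l)!l!2^l}$ are those of the Bessel polynomial, which supports this guess.

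To verify it, compute $D^{j}h$ by repeated differentiation under the integral. Boundary terms vanish to order $k$, because the kernel vanishes to order $k$ at both endpoints. One then checks that $\mathcal{L}_k h(t)$ equals $C\int_{1-t}^{1+t}F(r)\,r\,P(r)\,\D r$ plus endpoint-evaluation terms, where $P$ is symmetric. The cleanest check is on the test family $F(r)=\delta(r-r_0)$: in that case $h$ is an explicit piecewise polynomial and \eqref{rangeee} becomes a polynomial identity.

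\emph{Sufficiency and construction.} Given $h$ satisfying \eqref{rangeee}, we must produce $F$. The plan is to invert the Abel-type relation above. Applying $D=\frac1t\frac{\D}{\D t}$ a total of $k+1$ times to $h$ strips off the polynomial kernel; each application lowers the power of the bracket by one. After $k+1$ applications, $D^{k+1}h$ is a combination of $F(1+t)$ and $F(1-t)$, plus lower-order terms in $F$ and its derivatives at these two points. This yields an explicit local formula of the shape
\begin{equation*}
F(1+t)\cdot(\text{weight}) \pm F(1-t)\cdot(\text{weight}) = [\text{explicit differential operator applied to } h](t).
\end{equation*}
This is the odd-dimensional locality. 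Setting $t$ near $0$ and $t\to$ its reflection, the symmetry \eqref{rangeee} should be exactly the statement that this two-point relation is consistent. It forces $F(1+t)$, for $t>0$, to be zero, which is the support condition $\operatorname{supp}F\subset[0,1)$. It then expresses $F(1-t)$, for $t\in(0,1)$, as a local differential expression in $h$. We define $F$ by this formula, check that $F$ is smooth and compactly supported in $[0,1)$, and extend it evenly to get a smooth radial $f$ (smoothness at the origin follows from $h$ vanishing near $t=1$). Finally, we verify $\Rc f=g$. Since both sides satisfy the same $(k+1)$-fold $D$-equation and both vanish near $t=0$, their difference solves $D^{k+1}(\cdot)=0$ with zero data, and hence vanishes.

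\emph{Main obstacle.} The hard step is the exact algebraic matching between $\mathcal{L}_k$ and the iterated-$D$ inversion. One has to show that the Bessel-coefficient operator $\mathcal{L}_k$ is precisely what makes the reflected difference equal to the $F(1+t)$ terms, so that \eqref{rangeee} is equivalent to vanishing outside the ball. I would first try induction on $k$, using a recursion for $\mathcal{L}_k$ in terms of $\mathcal{L}_{k-1}$ that is inherited from the three-term recurrence of Bessel polynomials, and checking the case $k=0$ ($n=3$) directly. In that case $h(t)=c\int_{1-t}^{1+t}rF(r)\,\D r$ and the condition reads $h(1-t)=h(1+t)$.
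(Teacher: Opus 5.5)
\textbf{There is a genuine gap: the sufficiency direction fails at its central step and is otherwise unproved.}

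Your sufficiency argument rests on a step that fails as stated. With $Q(t,r)=((1+t)^2-r^2)(r^2-(1-t)^2)=4r^2-(1+r^2-t^2)^2$ one has $D_tQ=4(1+r^2-t^2)$ and $D_t^2Q=-8$. Each $D$ lowers the power of $Q$ by one, but it creates a factor $1+r^2-t^2$ that later $D$'s must also remove. So $Q^k$, a polynomial of degree $2k$ in $t^2$, is annihilated by $D_t^{2k+1}$, not by $D_t^{k+1}$. Already for $k=1$ (dropping $c_n$) the result is nonlocal:
\begin{equation*}
D^2h(t)=\frac{8}{t}\Big[(1+t)^2F(1+t)+(1-t)^2F(1-t)\Big]-8\int_{1-t}^{1+t}rF(r)\,\D r .
\end{equation*}
Locality appears only for $\frac{\D}{\D t}D^{2k}h$. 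What one gets there (using $F=0$ on $(1,2)$) is not a formula for $F(1-t)$ but the $k$-th order ODE \eqref{ODE:op},
$$\frac{\D}{\D t}[D^{2k}h](t)=\frac{(-1)^kk!\,2^{3k}}{t^{2k}}[\mathcal{L}_k(s^{2k+1}F)](1-t)$$
in the normalization of Theorem~\ref{ODE:inversion}, together with its counterpart \eqref{ODE:op2} on $(1,2)$. So your step ``define $F$ by this formula'' has nothing to apply to. You must either solve this ODE or write down a two-sided candidate such as \eqref{Eq f expression}. That formula does use only $k+1$ derivatives of $h$, but it combines the data at $1\pm t$ and agrees with its integral forms only because of \eqref{rangeee}.

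Your closing step, that $\Rc f$ and $g$ ``satisfy the same $D$-equation'', is then exactly what has to be proved. One must show that the constructed $F$ satisfies the ODE driven by $h$ on both $(0,1)$ and $(1,2)$, and this is where \eqref{rangeee} has to enter. The proposal gives no mechanism for this. In the paper it is the Proposition of Step~1,
$$\frac{\D}{\D t}[D^{2k}\widetilde h_k](1-t)=T\big([\mathcal{L}_kh_k](1+t)-[\mathcal{L}_kh_k](1-t)\big)+\frac{\D}{\D t}[D^{2k}h_k](1-t)$$
for an explicit operator $T$. After the boundary terms cancel, this reduces to the polynomial identity $F(t,u)\equiv0$ of Lemma~\ref{mainlemma}. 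Your ``main obstacle'' paragraph names this difficulty but offers only a hoped-for induction.

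Second, your smoothness claim is false. $h\in C_c^\infty((0,2))$ vanishes near $t=0$ and $t=2$, not near $t=1$. Its behaviour near $t=1$ is precisely what encodes $f$ near the origin, since the spheres of radius $1$ centred on $\Sn$ pass through $0$. For $k\ge1$ the candidate has the form $M(t)/t^{2k+1}$ with $M$ smooth and odd, and one must show that $M$ vanishes to order $2k+1$ at $t=0$. The paper does this by deriving $\frac{\D}{\D t}[D^{2k}h_k](1)=0$ from the range condition, then reading off the leading power of $M$ from the ODE as $t\to0$.

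Finally, your necessity part is only a conjecture. It also contains a slip: $\Phi(b,a)=-\Phi(a,b)$ for every $k$, not $(-1)^{k+1}\Phi(a,b)$. The paper does not reprove necessity but takes it from \cite{AAKN1}. By Proposition~\ref{Lem:Lktointegral}, condition \eqref{rangeee} is equivalent to the vanishing of $\int_{1-t}^{1+t}\frac{\D}{\D u}[D^{2k}h](u)\,Q(t,u)^k\,\D u$.
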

In \cite{AAKN1}, the authors proved the necessity of two conditions in Theorem \ref{generalrange} by an application of Funk-Hecke theorem \cite[Theorem 3]{Seeley} and for the proof of sufficiency, they relied on the following range characterization result of Agranovsky-Kuchment-Quinto.

\begin{theorem}\cite[Theorem~11]{Agranovsky-Kuchment-Quinto}\label{range-AKQ}
    Let $n > 1$ be an odd integer. A function $g \in C_c^\infty(\Sn \times (0,2))$ is representable as $\Rc f$ for some $f \in C_c^\infty(\Bb)$ if and only if for any $m$, the $m^{\mathrm{th}}$ order spherical harmonic term $\widehat{g}_m (p,\lambda)$ of $\widehat{g}(p,\lambda)$ vanishes at non-zero zeros of the Bessel function $J_{m+n/2-1}(\lambda)$, where
    \begin{align*}
        \widehat{g}(p,\lambda) =\int\limits_0^\infty g(p,t) j_{\frac{n-2}{2}}(\lambda t) t^{n-1} \, \D t
    \end{align*}
    is the Hankel transform of $g$. 
\end{theorem}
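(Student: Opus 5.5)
The plan is to work entirely on the Hankel–transform side. For $f\in C_c^\infty(\Bb)$ and $|p|=1$, integrating $\Rc f(p,t)$ against $j_{\frac{n-2}{2}}(\lambda t)t^{n-1}$ and switching to polar coordinates around $p$ gives
\begin{equation*}
\widehat{\Rc f}(p,\lambda)=\frac{1}{\o_n}\int_{\Bb} f(x)\, j_{\frac{n-2}{2}}(\lambda|x-p|)\,\D x .
\end{equation*}
Writing $\nu=\frac{n-2}{2}$, the Gegenbauer addition theorem expands $j_\nu(\lambda|x-p|)$, for $|x|<1=|p|$, as a series $\sum_m c_m\, \frac{J_{m+\nu}(\lambda|x|)}{(\lambda|x|)^{\nu}}\,\frac{J_{m+\nu}(\lambda)}{\lambda^{\nu}}\, C_m^{\nu}(\hat x\cdot p)$. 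The Funk–Hecke theorem \cite{Seeley} then shows that the $m$-th spherical harmonic component of $\widehat{\Rc f}(\cdot,\lambda)$ equals $J_{m+\nu}(\lambda)\lambda^{-\nu}$ times the order-$(m+\nu)$ Hankel transform of the $m$-th harmonic coefficients $f_{m,s}(r)$ of $f$. This factorization gives necessity at once: the $m$-th term vanishes at every nonzero zero of $J_{m+n/2-1}$.

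For sufficiency, fix $(m,s)$ and let $G_{m,s}(\lambda)$ be the corresponding coefficient of $\widehat g$. Since $g_{m,s}\in C_c^\infty((0,2))$, the Paley–Wiener theorem for the Hankel transform makes $G_{m,s}$ an even entire function of exponential type $\le 2$, rapidly decreasing on $\Rb$. By hypothesis the quotient $F_{m,s}(\lambda)=G_{m,s}(\lambda)\lambda^{\nu}/J_{m+\nu}(\lambda)$ is also even and entire; the zero of $J_{m+\nu}$ at the origin is handled by the factor $\lambda^\nu$, after checking the order of vanishing of $G_{m,s}$ at $0$. I then want to show that $F_{m,s}$ has exponential type $\le 1$ and is rapidly decreasing on $\Rb$. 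In that case the Paley–Wiener theorem for the Hankel transform of order $m+\nu$ yields a smooth $f_{m,s}$ supported in $[0,1]$, with the right parity and vanishing order at $r=0$, whose transform is $F_{m,s}$. I would then set $f=\sum f_{m,s}(r)Y_{m,s}(\theta)$. The necessity computation together with injectivity of the Hankel transform gives $\Rc f=g$ coefficientwise.

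The main obstacle is the division step. The lower bound $|J_{m+\nu}(\lambda)|\gtrsim e^{|\operatorname{Im}\lambda|}|\lambda|^{-1/2}$ holds only on contours that stay a fixed distance from the real zeros. I would first establish bounds on such horizontal lines and on circles threading between consecutive zeros. The maximum principle and Phragmén–Lindelöf would then carry the type-$1$ estimate, and the rapid decay on $\Rb$, to the whole plane.

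The second difficulty is making these estimates uniform in $m$, since this is what is needed to sum the series into a genuinely $C_\infty$ function on $\Bb$. For $m$ large compared with $|\lambda|$ the Bessel function is no longer in its oscillatory regime. There I would use the Debye/uniform asymptotics of $J_{m+\nu}$ to get polynomial-in-$m$ control. This polynomial control would then be combined with the rapid decay in $m$ of $g_{m,s}$ coming from smoothness of $g$ in $\theta$.
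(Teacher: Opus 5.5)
The paper does not prove this statement. It is quoted from \cite{Agranovsky-Kuchment-Quinto}, and the paper's purpose is to bypass it, so your proposal has to stand on its own. Your necessity argument, via Gegenbauer's addition theorem and Funk--Hecke, is fine. Sufficiency has a genuine gap, and it sits exactly at the step you call a check.

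\textbf{The vanishing at $\lambda=0$.} Write $\mu=m+\nu$ and $J_\mu(\lambda)=\lambda^{\mu}E_m(\lambda)$, with $E_m$ even, entire and $E_m(0)\neq0$. Then $F_{m,s}=G_{m,s}\lambda^{-m}/E_m$. So the factor $\lambda^\nu$ removes only part of the zero of $J_\mu$, and $F_{m,s}$ has parity $(-1)^m$, not even parity. You need $f_{m,s}(r)=r^m\phi(r)$ with $\phi$ smooth and even, since that is what makes $f_{m,s}(|x|)Y_{m,s}(x/|x|)$ smooth at $0$. This requires $F_{m,s}=\lambda^m\cdot(\text{even entire})$. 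Equivalently, $G_{m,s}$ must vanish to order $2m$ at $\lambda=0$, which means $\int_0^2 t^{2j+n-1}g_{m,s}(t)\,\D t=0$ for $0\le j<m$.

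These are the moment conditions. That they follow from the zero conditions is the real content of the odd-dimensional statement, and they do not follow from the properties you list. Up to a constant, $E_m$ is the $n$-dimensional Fourier transform of $(1-|y|^2)_+^{m-1}$. Take any radial $\psi\in C_c^\infty(\{|z|<1/2\})$ with $\int\psi\neq0$ and set $G=E_m\widehat\psi$. Then $G$ is even, entire, of type $\le 3/2$, rapidly decreasing on $\Rb$, and vanishes at every nonzero zero of $J_\mu$. Yet $G(0)\neq0$. Its profile $(1-|\cdot|^2)_+^{m-1}*\psi$ is smooth and supported in $[0,3/2]$; the only thing it fails to do is vanish near $t=0$.

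So the support of $g$ near $t=0$ must enter the argument. Here is one way:
\begin{itemize}
\item Work with type $\le 2-\delta$ throughout, not $2$. If $\operatorname{supp}g\subset\Sn\times[\delta,2-\delta]$, this holds, and you also need it to get $f$ compactly supported in $\Bb$.
\item Apply your division estimates to $\Psi=G_{m,s}/E_m$. This gives $\Psi=\widehat\psi$ with $\psi$ radial and supported in $|z|\le 1-\delta$, and hence $g_{m,s}(|y|)=c\,\bigl((1-|\cdot|^2)_+^{m-1}*\psi\bigr)(y)$.
\item For $|y|<\delta$ the cutoff is inactive, so there $g_{m,s}$ equals the polynomial $P=(1-|\cdot|^2)^{m-1}*\psi$. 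Since $g_{m,s}\equiv0$ near $0$, $P\equiv0$.
\item $P\equiv 0$ means $\widehat\psi\,(1+\Delta)^{m-1}\delta_0=0$. For radial $\widehat\psi$ this forces $\Psi^{(2i)}(0)=0$ for $i<m$, which is the required order-$2m$ vanishing of $G_{m,s}=E_m\Psi$.
\end{itemize}

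\textbf{Uniformity in $m$.} Per mode your division argument is standard, but your route to uniformity in $m$ does not work as described. For $|\lambda|\le m/2$, and on a complex neighbourhood of that segment of size comparable to $m$, the Debye asymptotics say precisely that $J_{m+\nu}$ is exponentially small in $m$. There you cannot bound $F_{m,s}$ by dividing. Any maximum-principle bound must come from a contour that leaves this region, so the contour passes at height $\gtrsim m$. On it, the only available information is $|G_{m,s}(\lambda)|\le C_N m^{-N}e^{(2-\delta)|\operatorname{Im}\lambda|}$, which yields only $|F_{m,s}|\lesssim e^{cm}$ for some $c>0$. The reason is that this a priori bound does not see the cancellation that makes $G_{m,s}$ small there. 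Coefficients that decay merely faster than every power of $m$ cannot absorb such a loss.

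The simple repair is to avoid uniform Hankel-side estimates altogether. Once each mode satisfies $g_{m,s}Y_{m,s}=\Rc(f_{m,s}Y_{m,s})$ with $\operatorname{supp}f_{m,s}\subset[0,1-\delta]$, apply an explicit odd-dimensional inversion operator $\mathcal{A}$, for example one from \cite{Finch-P-R}. Such an operator is a differential operator in $t$, followed by the backprojection $h\mapsto\int_{\Sn}h(p,|x-p|)\,\D S(p)$, possibly followed by $\Delta_x$. It is continuous from $C^\infty(\Sn\times(0,2))$ to $C^\infty(\Bb)$, and $\mathcal{A}\Rc=\mathrm{Id}$ on $C_c^\infty(\Bb)$. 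Hence $f:=\mathcal{A}g=\sum_{m,s}f_{m,s}Y_{m,s}$ converges in $C^\infty$ and is supported in $|x|\le 1-\delta$. By continuity of $\Rc$, it satisfies $\Rc f=g$.
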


Our first motivation for this paper is to provide a direct and constructive proof of sufficiency of the conditions given in \cite{AAKN1} without the use of any of the existing range characterization results; in particular the aforementioned result of Agranovsky-Kuchment-Quinto. 
Additionally, we note that the characterization result in Theorem \ref{range-AKQ} gives the existence of a function $f\in C_c^{\infty}(\Bb)$ such that $\Rc f =g$, but does not, however, give an explicit representation for such an $f$. This leads us to our second motivation.  We are interested in an explicit construction of an $f\in C_c^{\infty}(\Bb)$ such that $\Rc f =g$, where $h(p,t)=\dfrac{\omega_{2k+3}t^{2k+1}}{\omega_{2k+2}}g(p,t)$ satisfies conditions \eqref{Gencond1} and \eqref{Gencond2}.
More precisely, our main result states that if a function $g\in C_c^{\infty}(\Sb^{n-1}\times (0,2))$ satisfies the two conditions of Theorem \ref{generalrange}, then there exists an \emph{explicit} function $f\in C_c^{\infty}(\Bb)$ such that $\Rc f =g$. We again note that we do not rely on Theorem \ref{range-AKQ} in the construction of this $f$. It provides an alternate and reconstructive proof of the sufficiency part of the range characterization result in \cite{AAKN1}. 

The proof for the range characterization in this article can be viewed in the same spirit as the one for the classical Radon transform, in the sense that the range characterization result for the Radon transform in Schwartz spaces provides an explicit candidate through the use of Fourier slice theorem. The integral moment conditions are required to show that this candidate function is smooth across the origin. Similarly, in our set-up, we give a candidate function whose SMT is the given function in the range; see \eqref{Eq f expression}. We note that this function is well-defined in $\Bb\setminus \{0\}$. The conditions in Theorem \ref{generalrange} are used to show that this function extends smoothly across the origin. 

Our approach has a few other advantages that we would like to highlight. 
\begin{itemize} 
\item From the inversion formula we obtained in this work (see \eqref{Eq f expression}), the support of the reconstructed function is immediately evident. This is in contrast with most existing inversion formulas, which are formulated as integral representations and in such cases, the support of the reconstructed function  is not immediate. 
\item We also observe from our inversion formula that to determine the value of $f(x)$ at a point \(x=t\theta\), it suffices to know the given data and its derivatives on the cylinder  $\Sb^{n-1}\times[1-t,1+t]$.
\item A final feature is that the inversion formula involves only $k+1$ derivatives of the data; see \eqref{General f expression}, whereas the existing formulas in the literature require $2k+2$ derivatives on the data; see \cite{R, Finch-P-R}. 

\end{itemize} 

This article is organized as follows.
Section \ref{sec:mainresults} presents the main results of this article. In Section \ref{sec:prepresults}, we establish the preliminary results that are essential for the proofs of the main theorems. In Section \ref{equiv:repoff} we present equivalent representations for our candidate function. Sections \ref{Mthm:step2} and \ref{sec:genfunction}  contain the proofs of the main results in the case of functions independent of the angular variable and the general case respectively.
\subsection{Main results}\label{sec:mainresults}


We now state the first main result of this article, which provides an explicit reconstruction formula for radial functions from the given function $g$ (independent of the angular variable $p$) satisfying the  condition \eqref{rangeee}:

\begin{theorem}[Explicit reconstruction for radial functions] \label{Mainthm1} Let $\mathbb{B}$ denote the unit ball in $\mathbb{R}^{n}$ 
for an odd integer $n\geq 3$, and $k:=\frac{n-3}{2}$.
Let $g\in C_c^{\infty}((0,2))$ satisfy the range condition \eqref{rangeee}.
Then the unique $f\in C_c^{\infty}(\Bb)$ such that $\mathcal{R}f=g$ is given by: 
\begin{itemize}
\item for $k=0$: 
\[
f(t)=\frac{h'(1-t)-h'(1+t)}{2t},
\]
\item for $k\geq 1$: 
\[
f(t)=\frac{1}{2^{3k+1}k!}\sum_{j=0}^{k-1}(-1)^{k-j-1}\binom{k-1}{j}D^{k-j}\left(\frac{[D^{j+1}h_{k}](1+t)-[D^{j+1}h_{k}](1-t)}{t}\right),
\]
where $h_{k}(t)=\dfrac{4^{k}\omega_{2k+3}t^{2k+1}}{\omega_{2k+2}}g(t)$. 
\end{itemize} 
\end{theorem}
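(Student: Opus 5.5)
The proof has three stages. First I derive an explicit formula for $\Rc f$ when $f$ is radial. Then I check that the candidate $f$ in the statement satisfies $\Rc f=g$ away from the origin. Finally I use the range condition \eqref{rangeee} to show that the candidate extends smoothly across the origin and vanishes near $|x|=1$.

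\emph{Step 1: the forward map for radial $f$.} Write $f(x)=F(|x|)$ and put $\theta=e_1$. Integrating over the sphere $|x-e_1|=t$ in polar coordinates around the origin, with $r=|x|$, the cosine law $r^2=1+t^2-2t\cos\varphi$ gives $r\,dr=t\sin\varphi\,d\varphi$. This yields
\[
t^{n-2}g(t)=c_n\int_{|1-t|}^{1+t} F(r)\,r\,\bigl(\text{polynomial in } r,t \text{ of degree } 2k\bigr)\,dr .
\]
Here the weight $(\sin\varphi)^{n-3}=(\sin\varphi)^{2k}$ is a polynomial because $n$ is odd, and $c_n$ is the constant $\omega_{2k+2}/\omega_{2k+3}$ that is absorbed into $h_k$. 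For $k=0$ this reads $h(t)=\int_{|1-t|}^{1+t}rF(r)\,dr$. Differentiating gives $h'(1\pm t)$ as a combination of $(1\pm t\pm1)F(\cdot)$ terms, and one recovers
\[
F(t)=\frac{h'(1-t)-h'(1+t)}{2t}.
\]
For general $k$ I would use $D=\frac1t\frac{d}{dt}$. The plan is to apply $D$ repeatedly to remove the polynomial weight, in the spirit of Abel-type transforms: $D^{k+1}$ applied to the expression in terms of $h_k$ should produce boundary terms $F(|1\pm t|)$ together with lower-order $D^j$ terms. Organising these terms is exactly what produces the binomial sum with $\binom{k-1}{j}$ in the statement.

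\emph{Step 2: verifying $\Rc f=g$ on $\Bb\setminus\{0\}$.} The cleaner route is to go in the reverse direction. Define $f$ by the stated formula, which is smooth for $0<t<1$, and compute $\Rc f$ using Step 1. Integration by parts in $r$ transfers the operators $D^{k-j}$ onto the polynomial kernel. The resulting integrals of $[D^{j+1}h_k](1\pm r)$ telescope into $h_k(t)$ plus boundary terms evaluated at $r=0$ and $r=1$. The terms at $r=1$ vanish because $g$ vanishes near $t=0$ and near $t=2$. The terms at $r=0$ are combinations of $[\mathcal{L}_k h](1+s)-[\mathcal{L}_k h](1-s)$ at $s=0$ and its derivatives. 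I would identify these combinations with $\mathcal{L}_k$ by matching coefficients with \eqref{lkh:operator}, and then \eqref{rangeee} makes them vanish.

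\emph{Step 3: smoothness at the origin and support.} This is the main obstacle. The candidate involves the quotient
\[
\frac{[D^{j+1}h_k](1+t)-[D^{j+1}h_k](1-t)}{t}
\]
followed by further $D$'s, which produce powers $t^{-1}$. One must show that the combination is a smooth even function of $t$, so that $f(x)=F(|x|)$ is $C^\infty$. My first attempt would be to rewrite the sum, using the identity $D(t\,u)$-type commutations, as
\[
F(t)=c\,D^{k}\!\left(\frac{\Psi(t)}{t}\right),\qquad \Psi(t)=[\mathcal{L}_k h](1+t)-[\mathcal{L}_k h](1-t)
\]
up to explicit lower-order terms. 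By \eqref{rangeee}, $\Psi\equiv0$ on $[0,1]$. So in the expression of $F$ only even parts survive, and $D$ maps smooth even functions to smooth even functions.

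If that identification fails, the fallback is to expand in Taylor series at $t=0$. In that case I would show that all odd-order coefficients which would produce singular negative powers vanish, precisely because of the $k+1$ conditions obtained by differentiating \eqref{rangeee} at $t=0$. Compact support in $\Bb$ follows because $h$ vanishes near $0$ and near $2$, so $F(t)=0$ for $t$ near $1$. Uniqueness follows from injectivity of $\Rc$ on radial functions, which can be read off Step 1 as a Volterra-type equation.
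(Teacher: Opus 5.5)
The genuine gap is in your Step 2. Proving $\Rc f=g$ is the entire content of the theorem, and the mechanism you describe cannot work.

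For $0<t<1$ the radial integral runs over $[1-t,1]$, so there is no endpoint $r=0$. Moreover, no information about $\Psi(s):=[\mathcal{L}_kh_k](1+s)-[\mathcal{L}_kh_k](1-s)$ at $s=0$ can suffice. Note that $\Psi$ is odd, so $\Psi(0)$ and all its even derivatives vanish for every $h_k$. And $\Psi$ may vanish to infinite order at $s=0$ without vanishing identically, in which case the candidate fails.

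The case $k=0$ makes this explicit. With $f(u)=\frac{h'(1-u)-h'(1+u)}{2u}$ one computes
\[
\int_{1-t}^{1}u f(u)\,\D u=\tfrac12\bigl(h(t)+h(2-t)\bigr),\qquad 0<t<1,
\]
so $\Rc f=g$ on $(0,1)$ is equivalent to \eqref{rangeee} at the interior point $s=1-t$, for every $t$.

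For $k\ge 1$ the same thing happens. After your integrations by parts, the terms containing $[D^{j+1}h_k](1+u)$ only see $h_k$ on $[2-t,2]$, which is disjoint from $[0,t]$. Nothing telescopes into $h_k(t)$ unless \eqref{rangeee} is used as an identity in $t$ to trade data on $[1,2]$ for data on $[0,1]$. You need a concrete device for that.

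The paper's device is to prove $\frac{\D}{\D t}[D^{2k}(\wt{h}_k-h_k)](1-t)=T\Psi(t)$ for an explicit differential operator $T$. It does this as follows:
\begin{itemize}
\item it applies the inversion ODE \eqref{ODE:op} to $\wt{h}_k$;
\item it rewrites $t^{2k+1}f_\pm$ using \eqref{claim1}--\eqref{claim2};
\item it turns $\mathcal{L}$-differences into integrals against $Q^k$ and $Q^{k-1}$ via Proposition~\ref{Lem:Lktointegral};
\item it reduces the remainder to the kernel identity $F(t,u)\equiv 0$ of Lemma~\ref{mainlemma}.
\end{itemize}
Together with the vanishing of all $D$-derivatives of $\wt{h}_k$ and $h_k$ at $t=0$, this gives $\wt{h}_k=h_k$ on $[0,1]$. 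The interval $[1,2]$ is handled separately with \eqref{ODE:op2}; your sketch does not distinguish the two halves.

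Your Step 1 for $k\ge 1$ is only asserted. In any case it is the necessity direction, so it says nothing about the candidate unless you import existence from Theorem~\ref{RC:Radial}. That is exactly what the paper sets out to avoid.

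Conversely, Step 3 is not the obstacle you make it. Each $N_j(t)=[D^{j+1}h_k](1+t)-[D^{j+1}h_k](1-t)$ is smooth and odd on $(-1,1)$, so $N_j(t)/t$ is smooth and even. The operator $D$ maps smooth even functions to smooth even functions, which is the very fact you quote. Hence the candidate is a smooth even function on $(-1,1)$, so $f(|x|)$ is smooth on $\Bb$. It also vanishes near $t=1$, since $h_k$ vanishes near $0$ and $2$. None of this uses \eqref{rangeee}.

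Your rewriting as $cD^k(\Psi/t)$ ``up to lower-order terms'' would leave those terms unexplained, and the Taylor fallback is unnecessary. The paper reaches the same conclusion by a longer route: it writes $2^{3k+1}k!\,t^{2k+1}f=M=t^r\phi$ and uses the ODE as $t\to 0$. So the range condition is needed precisely in the step your proposal leaves open.
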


We now consider the general case. The spherical harmonic expansion of $g\in C_{c}^{\infty}(\mathbb{B})$ is given by
\begin{equation}
        g(x)=\sum_{q=0}^{\infty}\sum_{s=1}^{d_{q}} g_{q,s}(|x|)Y_{q,s}\left(\frac{x}{|x|}\right),
\end{equation}
where
\begin{equation}\label{dqandfqs}
 \quad d_{q}=\frac{(2q+n-2)(n+q-3)!}{q!(n-2)!},\quad d_{0}=1,\quad \text{and}\quad g_{q,s}(r)=\int_{\mathbb{S}^{n-1}}g(r\theta)\overline{Y}_{q,s}(\theta)\D \theta.
\end{equation}
 
 \begin{theorem}[Explicit reconstruction for general functions]
     Let $\mathbb{B}$ denote the unit ball in $\mathbb{R}^{n}$ 
for an odd integer $n\geq 3$, and $k:=\frac{n-3}{2}$. Let $g\in C_{c}^{\infty}(\mathbb{S}^{n-1}\times (0,2))$ and the spherical harmonics expansion of $g$ is given by
\begin{equation*}
     g(\theta,t)=\sum_{q=0}^{\infty}\sum_{s=1}^{d_{q}} g_{q,s}(t)Y_{q,s}(\theta),
\end{equation*}
where $g_{q,s}\in C_{c}^{\infty}((0,2))$. If we denote  $ h_{q,s}(t):=\dfrac{\omega_{2k+3}t^{2k+1}}{\omega_{2k+2}}g_{q, s}(t)$ satisfying the range condition, then the reconstruction for $f\in C_{c}^{\infty}(\mathbb{B})$ such that $\mathcal{R}f=g$ is given by
\begin{align*}
    f(x)=\sum_{q=0}^{\infty}\sum_{s=1}^{d_{q}} f_{q,s}(t)Y_{q,s}(\theta), \quad x=t\theta,
\end{align*}
where $f_{q,s}(t)$ is given by:
\begin{itemize}
\item for $k=0$ and $q=0$:
\begin{equation}\label{General f expression k=0, q=0 case}
    f_{0,1}(t)=\lb\frac{h_{0,1}^{\prime}(1-t)-h_{0,1}^{\prime}(1+t)}{2t}\rb,
\end{equation}
\item for $k=0$ and $q\geq 1$:
\Beq\label{General f expression k=0 case}
\begin{aligned}
    {f}_{q,s}(t)&=\frac{(-1)^{q-1}(t^2-1)}{t^{q+1}2^{3q-2}(q-1)!}\int_{1-t}^{1+t}\frac{d}{du}[D^{q}h_{q,s}](u)(Q(t,u))^{q-1}\D u\\
    &=\frac{(t^2-1)}{t^{q+1}2^{3q-2}(q-1)!}\int_{1-t}^{1+t}\frac{d}{du}[Dh_{q,s}](u)D^{q-1}\lb (Q(t,u))^{q-1}\rb \, \D u,
\end{aligned}
\Eeq
\item for $k\geq 1$ and $q\geq 0$:
\Beq\label{General f expression}
\begin{aligned}
    {f}_{q,s}(t)&=\frac{(-1)^{k+q-1}(t^2-1)}{t^{2k+q+1}2^{4k+3q-2}k!(k+q-1)!}\int_{1-t}^{1+t}\frac{d}{du}[D^{2k+q}h_{q,s}](u)(Q(t,u))^{k+q-1}\D u\\
    &=\frac{(t^2-1)}{t^{2k+q+1}2^{4k+3q-2}k!(k+q-1)!}\int_{1-t}^{1+t}\frac{d}{du}[D^{k+1}h_{q,s}](u)D^{k+q-1}\lb (Q(t,u))^{k+q-1}\rb \, \D u\\
&= \frac{t^2 - 1}{t^{2k+q+1} \, 2^{4k+3q-2} \, k! \, (k+q-1)!} 
\Bigg{\{} \left. \left[ D^{k+1} h_{q,s} \right](u) \, \left( D^{k+q-1} \left[ (Q(t,u))^{k+q-1} \right] \right) \right|_{u=1-t}^{u=1+t} \\
&\quad - \int_{1-t}^{1+t} \frac{\mathrm{d}}{\mathrm{d}u} \left[ D^k h_{q,s} \right](u) \, D^{k+q} \left[ (Q(t,u))^{k+q-1} \right] \mathrm{d}u \Bigg{\}}.
\end{aligned}
\Eeq
\end{itemize}
 \end{theorem}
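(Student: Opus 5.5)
We reduce to a family of one-dimensional problems, one for each spherical harmonic mode. By the Funk--Hecke theorem, if $f=\sum f_{q,s}(t)Y_{q,s}(\theta)$, then $\Rc f$ decouples mode by mode. Concretely, with centres $p\in\Sn$ and $x=r\theta$,
\begin{equation*}
(\Rc f)_{q,s}(\rho)=\frac{\omega_{n-1}}{\omega_n}\int_{|1-\rho|}^{1+\rho} f_{q,s}(r)\,P_q\!\left(\frac{1+r^2-\rho^2}{2r}\right)\left(1-\Big(\frac{1+r^2-\rho^2}{2r}\Big)^2\right)^{\frac{n-3}{2}}\frac{r^{n-2}}{\rho^{n-2}}\,\D r ,
\end{equation*}
where $P_q$ is the normalized Gegenbauer (Legendre) polynomial of degree $q$ in dimension $n$. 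For odd $n$ the weight $\bigl(1-(\cdot)^2\bigr)^{k}$ is a polynomial. So $h_{q,s}(\rho)$ equals an integral of $f_{q,s}(r)$ against a kernel that is polynomial in $\rho^2$ and in $r,r^{-1}$. This is where the quantity $Q(t,u)$ appearing in the formulas should come from: it is the argument $\frac{1+t^2-u^2}{2t}$, or its natural variant, after swapping the roles of $t$ and $u$.

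The construction then proceeds in three steps.

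\emph{Step 1 (candidate).} Define $f_{q,s}$ on $(0,1)$ by the explicit formulas \eqref{General f expression k=0 case} and \eqref{General f expression}. Set $f=\sum f_{q,s}Y_{q,s}$ on $\Bb\setminus\{0\}$. Since $h_{q,s}$ is supported in $(0,2)$, the integrals over $[1-t,1+t]$ vanish identically near $t=1$. This gives support in $\Bb$ and smoothness away from the origin. Convergence of the series in every $C^m$ norm on compact subsets of $\Bb\setminus\{0\}$ follows from the rapid decay in $q$ of the derivatives of $g_{q,s}$, since $g$ is smooth on $\Sn\times(0,2)$.

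\emph{Step 2 (verification).} Show that $\Rc f=g$ mode by mode. I would first do the radial case, Theorem \ref{Mainthm1}. The inversion is local in odd dimensions, so Abel-type differentiation is available. Write $\Rc$ on radial functions as $h(\rho)=c\int_{|1-\rho|}^{1+\rho}f(r)\,r\,\bigl[(r^2-(1-\rho)^2)((1+\rho)^2-r^2)\bigr]^{k}\rho^{-2k}\,\D r$, up to constants. Applying $D^{k}$ in suitable variables reduces this to the case $k=0$, namely $h(\rho)=\int_{|1-\rho|}^{1+\rho} r f(r)\,\D r$. Differentiating then gives $h'(1-t)-h'(1+t)=2tf(t)$. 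The symmetry condition \eqref{rangeee} is exactly what is needed for the candidate, substituted back, to reproduce $h$ instead of $h$ plus a term odd about $1$. For general $q$, condition \eqref{Gencond1} lets us write $h_{q,s}=D^q\phi_{q,s}$. The identity $D^q$ acting on the kernel, in the spirit of Rodrigues' formula for Gegenbauer polynomials, converts the mode-$q$ problem into the radial problem in dimension $n+2q$, i.e. with $k$ replaced by $k+q$. This explains both the index $\mathcal{L}_{q+k}$ in \eqref{Gencond2} and the power $(Q)^{k+q-1}$. The equality of the three expressions in \eqref{General f expression} is then routine repeated integration by parts in $u$ to move $D$'s between $h_{q,s}$ and $Q^{k+q-1}$.

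\emph{Step 3 (smoothness at the origin).} This is the main obstacle. The formula carries a factor $t^{-(2k+q+1)}$. We must show that $f_{q,s}(t)Y_{q,s}(\theta)$ extends smoothly to $0$. The standard criterion is that $f_{q,s}(t)=t^{q}\psi(t^2)$ with $\psi$ smooth, together with uniform estimates in $q$ so that the sum stays smooth. My plan is to Taylor expand the integral $\int_{1-t}^{1+t}(\cdots)\,\D u$ about $t=0$. Substituting $u=1+t\sigma$ with $\sigma\in[-1,1]$ turns it into $t$ times an integral over the fixed interval $[-1,1]$ whose integrand depends smoothly on $t$. The vanishing of the low-order Taylor coefficients, and the evenness/oddness needed for the $t^q\psi(t^2)$ structure, should reduce exactly to the symmetry $[\mathcal{L}_{q+k}\phi_{q,s}](1-t)=[\mathcal{L}_{q+k}\phi_{q,s}](1+t)$. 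The key observation I would try to prove first is that $\mathcal{L}_{q+k}\phi$ evaluated at $1\pm t$ is precisely what the integral reduces to after integrating by parts against $Q^{k+q-1}$. Its odd part about $u=1$ controls the singular terms. Finally, uniqueness follows from the injectivity of $\Rc$ on $C_c^\infty(\Bb)$ with centres on the sphere; alternatively, the same mode-wise Volterra-type equation has only the trivial solution.
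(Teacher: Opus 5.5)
\textbf{There is a genuine gap in Step 2, which carries the actual content of the theorem.} Your architecture is the paper's: work mode by mode, use $h_{q,s}=D^q\phi_{q,s}$ to turn mode $q$ into the radial problem with $k$ replaced by $k+q$ as in \eqref{phi(q,s)(t)}, set $f_{q,s}=t^q\widetilde f_{q,s}$, and sum. But Step 2 has no working mechanism.

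\emph{The reduction to $k=0$ fails.} Your claim that applying $D^k$ reduces the problem to the case $k=0$ is false. Already for $k=1$, with $h_1$ as in \eqref{hkdef}, one finds for $0<t<1$
\begin{equation*}
D^2h_1(t)=\frac{8(1-t)^2f(1-t)}{t}-8\int_{1-t}^{1}uf(u)\,\D u ,
\end{equation*}
so a local term survives. The correct statement is the $k$-th order ODE \eqref{ODE:op}.

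\emph{Sufficiency is not addressed.} Differentiating the transform only gives necessity. Sufficiency means substituting the candidate back and proving $\widetilde h_k=h_k$. Saying the symmetry condition is exactly what is needed is the conclusion, not an argument. The paper proceeds as follows:
\begin{enumerate}
\item It applies \eqref{ODE:op} to the candidate and rewrites $t^{2k+1}f$ through \eqref{claim1}--\eqref{claim2}.
\item By Proposition \ref{Lem:Lktointegral}, it converts the antisymmetric parts of $\Lc_{k-1}D^2h_k$ and $\Lc_kh_k$ into integrals against $Q^{k-1}$ and $Q^k$.
\item After the boundary terms match, everything reduces to the kernel identity $F(t,u)\equiv 0$ of Lemma \ref{mainlemma}. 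For an explicit operator $T$ this gives
\begin{equation*}
\frac{\D}{\D t}[D^{2k}\widetilde h_k](1-t)-\frac{\D}{\D t}[D^{2k}h_k](1-t)=T\bigl([\Lc_kh_k](1+t)-[\Lc_kh_k](1-t)\bigr)=0.
\end{equation*}
\item One integrates back using that all $D$-derivatives of $\widetilde h_k$ and $h_k$ vanish at $t=0$, and uses \eqref{ODE:op2} on $[1,2]$.
\end{enumerate}
Nothing in your proposal replaces this identity. It is needed even for $n=3$, since every mode $q\ge 1$ becomes a radial problem with $k+q\ge1$; your $k=0$ computation covers only $q=0$. Incidentally, $Q(t,u)=4u^2(1-\gamma^2)$ with $\gamma=\frac{1+u^2-t^2}{2u}$, so $Q$ is the Funk--Hecke weight, not its argument.

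\textbf{Step 1 also has two gaps.}

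\emph{Convergence.} The first formula in \eqref{General f expression} puts $2k+q+1$ derivatives on $h_{q,s}$. The decay $|\partial_t^m g_{q,s}|\le C_{N,m}q^{-N}$ holds only for fixed $m$, so it gives nothing here. The paper instead uses the integrated-by-parts forms. These keep a bounded number of derivatives on $h_{q,s}$ and move the rest onto $Q^{k+q-1}$. With $u^2=1+t^2+2ty$ one has $Q=4t^2(1-y^2)$ and $D=t^{-1}\frac{\D}{\D y}$. Rodrigues' formula and the sup bound for Gegenbauer polynomials then offset the factor $2^{3q}(k+q-1)!$ in the prefactor. Choosing how many derivatives stay on $h_{q,s}$ also absorbs the $t^{-|\alpha|}$ produced by $x$-derivatives, which is what controls the series near $x=0$.

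\emph{Support.} Vanishing of $f_{q,s}$ near $t=1$ does not follow from $\operatorname{supp}h_{q,s}\subset(0,2)$. For $t$ near $1$ the interval $[1-t,1+t]$ covers the whole support. For $q\ge3$, integration by parts leaves an interior term $\pm\int h_{q,s}(u)\,D^{2k+q+1}\bigl(Q^{k+q-1}\bigr)\,u\,\D u$. It vanishes only because $h_{q,s}=D^q\phi_{q,s}$ with $\phi_{q,s}\in C_c^\infty((0,2))$, which is \eqref{Gencond1}, together with $D^{2k+2q-1}Q^{k+q-1}=0$.

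\textbf{Your Step 3 is sound, and shorter than the paper's argument once made precise.} With $u=1+t\sigma$ one gets $f_{q,s}(t)=C\,(t^2-1)\,t^{q-2}J(t)$. Here $J$ is smooth and even, and $J(0)=c_{k,q}\,G(1)$ with $c_{k,q}\neq0$ and $G=\frac{\D}{\D u}[D^{2k+2q}\phi_{q,s}]$. Hence $f_{q,s}=t^q\psi(t^2)$ if and only if $G(1)=0$. This follows from \eqref{Gencond2}: by Proposition \ref{Lem:Lktointegral}, $\int_{1-t}^{1+t}G(u)Q^{k+q}\,\D u$ vanishes identically, and the coefficient of $t^{2k+2q+1}$ in its Taylor expansion is a nonzero multiple of $G(1)$.

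Note the exponent shift. The candidate carries $Q^{k+q-1}$, so by \eqref{candidate2:new} it is the antisymmetric part of $\Lc_{k+q-1}D^2\phi_{q,s}$, not of $\Lc_{k+q}\phi_{q,s}$. Your \emph{key observation} is off by exactly this shift.
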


\section{Preliminaries}\label{sec:prepresults}
In this section, we recall certain known results that will be employed throughout the paper.

 We begin by recalling a few  estimates related to spherical harmonics. The following estimates are well-known:
 \begin{enumerate}
 \item\label{fact1} $d_{q}=\mathcal{O}(q^{n-2})$ for $q$ sufficiently large; see \cite[equation (2.12)]{Atk:book};
     \item\label{fact2} $\max\left\{|Y_{q,s}(\theta)|,\theta\in \mathbb{S}^{n-1},~~1\leq s\leq d_{q}\right\}\leq \left(\frac{d_{q}}{|\mathbb{S}^{n-1}|}\right)^{\frac{1}{2}}\lesssim \sqrt{d_{q}}$; see \cite[equation (2.36)]{Atk:book}; and
     \item\label{fact3} any derivative of $g_{q,s}(t)$ for $t\in (0,2)$ decays faster than any polynomial. That is, for every $N,m\in\mathbb{N}_{0}$ there exists $C_{N,m}>0$ such that  $|\partial_{t}^{m}g_{q,s}(t)|\leq \frac{C_{N,m}}{q^{N}}$. 
     This can be seen as follows: 
     \begin{align*}
        |\partial_{t}^{m}g_{q,s}(t) |=\left|{\int_{\Sb^{n-1}}}\PD_t^m g(t,\theta)Y_{q,s}(\theta)\right|=&\left|\frac{1}{(q(q+n-2))^N}{\int_{\Sb^{n-1}}}\PD_t^m g(t,\theta)\Delta_{\Sb^{n-1}}^NY_{q,s}(\theta)\right|\\
        =&\left|\frac{1}{(q(q+n-2))^N}.{\int_{\Sb^{n-1}}}\Delta_{\Sb^{n-1}}^N\PD_t^m g(t,\theta)Y_{q,s}(\theta)\right|
        \leq \frac{C_{N,m}}{q^N}.
     \end{align*}
 \end{enumerate}
 For  radial functions, using Funk-Hecke theorem \cite[Theorem 3]{Seeley}, we have
\begin{align*}
\Rc f(p,t)=\frac{C_{n}}{t^{n-2}}\int\limits_{|1-t|}^{1} uf(u) [Q(t,u)]^{\frac{n-3}{2}} \D u,
\end{align*}
where
$$C_{n}=\frac{\omega_{n-1}}{\omega_{n}2^{n-3}}\quad\text{and}\quad Q(t,u)=((1+t)^2-u^2)(u^2-(1-t)^2).$$ 

Denoting $h(t)=\dfrac{t^{n-2}}{C_{n}}\mathcal{R}f(t)=\dfrac{t^{n-2}}{C_{n}}g(t)$ (note that since $f$ is radial, $\mathcal{R}f$ is independent of $p$), we get
\begin{equation*}
    h(t)=\int\limits_{|1-t|}^{1} uf(u) [Q(t,u)]^{\frac{n-3}{2}} \D u.
\end{equation*}
If we set $k:=\dfrac{n-3}{2}$ and using the notation $h_{k}(t)$ instead of $h(t)$ to emphasize  the dependence on $k$,  we have
\begin{equation}\label{hkdef}
    h_{k}(t)=\int\limits_{|1-t|}^{1} uf(u) [Q(t,u)]^{k} \D u.
\end{equation}

We will require the following result established in \cite{CKT,chatterjee2026unifiedrangecharacterizationspherical}: 
\begin{theorem}[Inversion algorithm for radial functions with partial radial data]\label{ODE:inversion} Let $\epsilon>0$, $n\geq 3$ be an odd integer, and $k:=\frac{n-3}{2}$. Also assume that $f\in C_{c}^{\infty}(\mathbb{B})$ and  $$h_{k}(t)=\dfrac{4^{k}\omega_{2k+3}t^{2k+1}}{\omega_{2k+2}}\mathcal{R}f(p,t),~~\forall~(p,t)\in \mathbb{S}^{n-1}\times (0,1),$$   then $f$ can be recovered in the annular region $\mathbb{B}(\epsilon,1)$  by solving
\begin{equation}\label{ODE:op}
			\left[\frac{\D}{\D t}D^{2k}h_{k}\right](t)
			=\frac{(-1)^{k}k!2^{3k}}{t^{2k}}   \left[\mathcal{L}_{k}(t^{2k+1}f)\right](1-t),
		\end{equation}
        with initial conditions $f^{(i)}(1-\epsilon^{\prime})=0$ for $0\leq i\leq k-1$, where $\epsilon^{\prime}>0$ is small enough such that $1-\epsilon^{\prime}$ lies outside the support of $f$.
\end{theorem}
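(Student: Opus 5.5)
The plan is to derive \eqref{ODE:op} by differentiating the representation \eqref{hkdef} under the integral sign, and then to read it as a linear ODE for $f$ on $(\epsilon,1)$. By the Funk--Hecke reduction recalled above, for $t\in(0,1)$ we have $|1-t|=1-t$ and
\[
h_k(t)=\int_{1-t}^{1}uf(u)\,[Q(t,u)]^k\,\D u .
\]
The factor $4^k$ in the definition of $h_k$ in the statement only rescales constants; I will track it at the end.

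The key structural observation is that $Q$ is a quadratic polynomial in $s=t^2$. Indeed
\[
Q(t,u)=4t^2-(1+t^2-u^2)^2 ,
\]
and $D=\frac1t\frac{\D}{\D t}=2\frac{\D}{\D s}$. Consequently:
\begin{itemize}
\item $DQ=4(1-t^2+u^2)$;
\item $D^{j}Q^k$ is a polynomial in $(t^2,u^2)$ for every $j$;
\item $D^{2k}Q^k=(-1)^k2^{2k}(2k)!$ is a constant.
\end{itemize}
Moreover, $Q(t,u)$ has a simple zero at $u=1-t$ (for $0<t<1$), so $Q^k$ vanishes to order $k$ there.

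I will apply $D$ repeatedly to $h_k$. Each application produces
\begin{itemize}
\item an interior term $\int_{1-t}^1 uf(u)\,D^{j}[Q^k]\,\D u$, and
\item a boundary term $\frac1t(1-t)f(1-t)\,[D^{j-1}Q^k]_{u=1-t}$, coming from differentiating the lower limit, together with the $D$-derivatives of the boundary terms already generated.
\end{itemize}
For $j\le k$ the boundary contributions vanish, since $D^{i}Q^k|_{u=1-t}=0$ for $i<k$. From $j=k+1$ on, nonzero boundary terms appear. Iterating $D$ on these produces $f(1-t)$ and its derivatives up to order $k-1$, with explicit polynomial coefficients in $t$ and $1-t$. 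After $2k$ applications the interior kernel is the constant $D^{2k}Q^k$. The final $\frac{\D}{\D t}$ then turns the remaining interior term $\int_{1-t}^1 uf$ into a multiple of $(1-t)f(1-t)$, while the boundary terms acquire one more derivative. The result is an identity of the form
\[
\Big[\tfrac{\D}{\D t}D^{2k}h_k\Big](t)=t^{-2k}\sum_{l=0}^{k}c_{l}(t)\,\big[D^{k-l}(\cdot)\big](1-t).
\]

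The step I expect to be the main obstacle is matching the coefficients $c_l$ with those of $\mathcal L_k$ applied to $t^{2k+1}f$, including the overall constant $(-1)^kk!\,2^{3k}$. Rather than track every boundary term by hand, I would try an induction on $k$ using $Q^{k+1}=Q\cdot Q^k$ and a Leibniz rule for $D$. If that proves unwieldy, the fallback is a uniqueness shortcut. Both sides of \eqref{ODE:op} are linear differential expressions in $f$ of order $k$ with polynomial coefficients, so it suffices to check the identity on a family of test functions rich enough to determine such operators, for instance $f(u)=u^{m}$ for many $m$. For these the integrals are explicit polynomials in $t$, and the comparison reduces to a polynomial identity.

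Once \eqref{ODE:op} is established, recovery is standard. Write $r=1-t$. Since $h_k$ is known on $(0,1)$, the left side is a known function of $t$. The right side is a linear ODE of order $k$ in $F(r)=r^{2k+1}f(r)$: its leading term $(1-t)^kD^kF$ carries the coefficient of the $l=0$ term of $\mathcal L_k$, and this coefficient is nonvanishing for $r\in(\epsilon,1)$, i.e. $t\in(0,1-\epsilon)$. For $r$ near $1$, that is $r\ge1-\epsilon'$, the function $f$ vanishes, which supplies the zero initial data. Existence and uniqueness for linear ODEs with smooth coefficients, integrated from $r=1-\epsilon'$ down to $r=\epsilon$, then determine $F$, and hence $f=F/r^{2k+1}$, on the annulus $\mathbb B(\epsilon,1)$.
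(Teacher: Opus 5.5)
\textbf{The gap.} Several parts of your set-up are correct:
\begin{itemize}
\item With $C_n=\omega_{n-1}/(\omega_n2^{n-3})$, the factor $4^k$ makes $h_k(t)=\int_{1-t}^1uf(u)Q(t,u)^k\,\mathrm{d}u$ hold exactly. So there is nothing to ``track at the end'', and the constant $(-1)^kk!\,2^{3k}$ depends on exactly this normalisation.
\item The first $k$ applications of $D$ produce no boundary terms, and $D^{2k}Q^k=(-1)^k2^{2k}(2k)!$.
\item Because the upper limit is fixed, $\frac{\mathrm{d}}{\mathrm{d}t}D^{2k}h_k(t)$ is a local linear operator of order $k$ in $f$ at $1-t$.
\end{itemize}
This gives only the \emph{shape} of the left-hand side. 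The theorem is exactly the claim that this operator equals $(-1)^kk!\,2^{3k}t^{-2k}[\mathcal{L}_k(r^{2k+1}f)](1-t)$. You flag the coefficient matching as the main obstacle and never carry it out.

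Only the top-order coefficient is immediate. Leibniz on $(u^2-(1-t)^2)^k((1+t)^2-u^2)^k$ gives $[D^kQ^k]_{u=1-t}=k!\,2^{3k}(1-t)^k$. Both sides then carry $(-1)^kk!\,2^{3k}t^{-k}(1-t)^{k+1}$ in front of $f^{(k)}(1-t)$. All the content is in the lower-order coefficients, and neither of your fallbacks supplies them.
\begin{itemize}
\item The test-function route only restates the problem. For fixed $t$, the difference of the two sides on $u^m$ is $(1-t)^m$ times a polynomial of degree at most $k$ in $m$, so $m=0,\dots,k$ would suffice. But for general $k$ you would still have to prove a $k$-indexed family of identities for $\frac{\mathrm{d}}{\mathrm{d}t}D^{2k}\int_{1-t}^1u^{m+1}Q^k\,\mathrm{d}u$. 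That is the same computation in disguise.
\item The induction is announced without any recursion. A Leibniz rule applied to the kernel $Q\cdot Q^k$ alone also does not account for the moving lower limit.
\end{itemize}

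\textbf{How to close it.} The paper does not prove this statement; it imports it from \cite{CKT}. Your induction can be made concrete as follows.
\begin{itemize}
\item Use $Q=-t^4+2t^2(1+u^2)-(1-u^2)^2$. Writing $h_k[g](t)=\int_{1-t}^1ug(u)Q^k\,\mathrm{d}u$, you get $h_{k+1}[f]=-t^4h_k[f]+2t^2h_k[(1+u^2)f]-h_k[(1-u^2)^2f]$, with the same lower limit.
\item $D$ is a derivation with $D(t^2)=2$, and $D^{2k+1}h_k=\frac1t\frac{\mathrm{d}}{\mathrm{d}t}D^{2k}h_k$. Hence $\frac{\mathrm{d}}{\mathrm{d}t}D^{2k+2}h_{k+1}[f]$ can be written entirely through the inductive hypothesis applied to $f$, $(1+u^2)f$ and $(1-u^2)^2f$.
\item The terms of order $k+2$ then cancel, because their combined coefficient is proportional to $Q(t,1-t)=0$.
\end{itemize}
What remains is one explicit operator identity between $\mathcal{L}_{k+1}(r^{2k+3}\,\cdot\,)$ and $\mathcal{L}_k(r^{2k+1}\,\cdot\,)$. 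It reduces to combinatorial identities among the coefficients $\frac{(k+l)!}{(k-l)!\,l!\,2^l}$, and that verification is the real content you still owe. Once the identity is established, your recovery argument is fine: the leading coefficient is nonvanishing for $0<r<1$, and the Cauchy data at $r=1-\epsilon'$ are zero.
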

\begin{remark}\label{rmk:t02}
    In the above theorem, if we instead utilize the data $h_{k}(t)$ for $t\in (1,2)$, then the ODE \eqref{ODE:op} is replaced by
    \begin{equation}\label{ODE:op2}
			\left[\frac{\D}{\D t}D^{2k}h_{k}\right](t)
			=\frac{(-1)^{k+1}k!2^{3k}}{t^{2k}}   \left[\widetilde{\mathcal{L}}_{k}(t^{2k+1}f)\right](t-1),
		\end{equation}
        where
        \begin{align*}
[\widetilde{\mathcal{L}}_kh_{k}](t)=\sum_{l=0}^{k}\frac{(k+l)!}{(k-l)!l!2^l}(1+t)^{k-l}[D^{k-l}h_{k}](t).
\end{align*}
\end{remark}
The following proposition follows easily from \cite[Theorem 2.1]{AAKN2}.
\begin{proposition}\label{Lem:Lktointegral}
For $\psi\in C_c^{\infty}((0,2))$ we have,
\begin{align*}
    \left[\mathcal{L}_{k}\psi\right](1+t)-\left[\mathcal{L}_{k}\psi\right](1-t)=\frac{(-1)^{k}}{2^{3k}k!}\int_{1-t}^{1+t}\left[\frac{d}{du}D^{2k}\psi\right](u)(Q(t,u))^{k}\D u,
\end{align*}    
where $\mathcal{L}_k$ is defined as in \eqref{lkh:operator}.    
\end{proposition}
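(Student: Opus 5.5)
We plan to prove the identity by passing to the variable $s=u^2$ and integrating by parts $2k+1$ times against the polynomial weight $Q^k$.

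\textbf{Change of variables.} Put $s=u^2$, $A=(1+t)^2$, $B=(1-t)^2$. Then $D=\frac1u\frac{\D}{\D u}=2\frac{\D}{\D s}$, and
\[
Q(t,u)=(A-s)(s-B)=:P(s),
\]
a quadratic in $s$ vanishing simply at $s=A$ and $s=B$. Since $\frac{\D}{\D u}D^{2k}\psi=u\,D^{2k+1}\psi$ and $u\,\D u=\frac12\D s$, writing $\Psi(s)=\psi(\sqrt s)$ the right-hand integral becomes
\[
\int_{1-t}^{1+t}\Big[\frac{\D}{\D u}D^{2k}\psi\Big](u)\,Q^k\,\D u=\frac12\int_B^A 2^{2k+1}\,\Psi^{(2k+1)}(s)\,P(s)^k\,\D s .
\]
The problem is now an integral of a high derivative of $\Psi$ against the polynomial $P^k$, which has degree $2k$.

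\textbf{Repeated integration by parts.} Integrate by parts $2k+1$ times, moving all derivatives onto $P^k$.
\begin{itemize}
\item Because $P^k$ vanishes to order exactly $k$ at both endpoints, $(P^k)^{(j)}(A)=(P^k)^{(j)}(B)=0$ for $j<k$. So the first $k$ steps produce no boundary terms.
\item The final integral contains $(P^k)^{(2k+1)}\equiv0$, since $\deg P^k=2k$.
\end{itemize}
What remains is a finite sum of boundary terms,
\[
\sum_{j=k}^{2k}(-1)^j\Big[\Psi^{(2k-j)}(s)\,(P^k)^{(j)}(s)\Big]_{s=B}^{s=A}.
\]

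\textbf{Endpoint values of $(P^k)^{(j)}$.} By Leibniz, at $s=A$ only the terms where $(A-s)^k$ is differentiated exactly $k$ times survive. This gives
\[
(P^k)^{(j)}(A)=\binom{j}{k}(-1)^k k!\,\frac{k!}{(2k-j)!}\,(A-B)^{2k-j},\qquad A-B=4t .
\]
The value at $s=B$ is analogous, with $(s-B)^k$ absorbing $k$ derivatives and factor $(A-B)^{2k-j}$ unchanged, but with a different sign pattern.

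\textbf{Matching with $\mathcal L_k$.} Set $l=j-k\in\{0,\dots,k\}$ and use $\Psi^{(k-l)}=2^{-(k-l)}D^{k-l}\psi$.
\begin{itemize}
\item At $u=1+t$ we have $4t=4(u-1)$, so the factor $(4t)^{k-l}$ becomes $(-4)^{k-l}(1-u)^{k-l}$.
\item At $u=1-t$ we have $4t=4(1-u)$ directly.
\item The combinatorial coefficient simplifies as
\[
\binom{k+l}{k}\frac{k!\,k!}{(k-l)!}=\frac{k!\,(k+l)!}{l!\,(k-l)!},
\]
which is exactly $k!\,2^{l}$ times the coefficient of $\mathcal L_k$ in \eqref{lkh:operator}.
\end{itemize}
Thus each endpoint contributes a constant multiple of $[\mathcal L_k\psi](1\pm t)$. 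Collecting the powers of $2$ (from $2^{2k}$, $4^{k-l}$, $2^{-(k-l)}$ and $2^{-l}$) should give the overall factor $(-1)^k 2^{3k}k!$. Inverting it yields the stated constant $\frac{(-1)^k}{2^{3k}k!}$.

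\textbf{Main obstacle.} The delicate step is this final bookkeeping of signs. The $(-1)^j$ from the integrations by parts, the $(-1)^k$ from differentiating $(A-s)^k$, and the sign produced by rewriting $u-1$ as $-(1-u)$ at $s=A$ must combine so that the $s=A$ and $s=B$ contributions carry identical $l$-independent coefficients with opposite overall signs. That is precisely what produces the difference $[\mathcal L_k\psi](1+t)-[\mathcal L_k\psi](1-t)$. I would first check the cases $k=0$ and $k=1$ by hand to fix these signs before writing the general computation.
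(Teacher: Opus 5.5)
Your proposal is correct, and it takes a genuinely different route from the paper. The paper gives no argument of its own: it states that the identity ``follows easily'' from \cite[Theorem 2.1]{AAKN2}, importing it from the backprojection/Rubin-conjecture framework of that work. Your argument is self-contained and elementary. The substitution $s=u^2$ turns $D$ into $2\,\frac{\D}{\D s}$ and $Q$ into the quadratic $P(s)=(A-s)(s-B)$. Then $2k+1$ integrations by parts against $P^k$ leave only boundary terms, and their Leibniz coefficients are visibly those of $\mathcal{L}_k$. So your route also explains where the operator $\mathcal{L}_k$ comes from, and for $0\le t<1$ it uses only $C^{2k+1}$-smoothness of $\psi$ on $[1-t,1+t]$, not compact support. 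The paper's route, by contrast, situates the identity inside the range/backprojection theory of \cite{AAKN2}, at the cost of being non-self-contained.

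The sign and power bookkeeping you flagged as the main obstacle does close, and most cleanly via symmetry. Since $P(A+B-s)=P(s)$, you get $(P^k)^{(j)}(B)=(-1)^j(P^k)^{(j)}(A)$. Put $j=k+l$ and let $c_l=\frac{(k+l)!}{(k-l)!\,l!\,2^l}$ be the $l$-th coefficient of $\mathcal{L}_k$. Then the $j$-th boundary term equals $(-1)^l k!\,2^l c_l\,(4t)^{k-l}[\Psi^{(k-l)}(A)-(-1)^{k+l}\Psi^{(k-l)}(B)]$. Use $\Psi^{(k-l)}=2^{-(k-l)}D^{k-l}\psi$. At $u=1+t$ write $(-1)^l t^{k-l}=(-1)^k(1-u)^{k-l}$, and at $u=1-t$ write $(-1)^k t^{k-l}=(-1)^k(1-u)^{k-l}$. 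The boundary sum then becomes $(-1)^k 2^k k!\{[\mathcal{L}_k\psi](1+t)-[\mathcal{L}_k\psi](1-t)\}$. The prefactor $2^{2k}$ from the change of variables brings the total to exactly $(-1)^k 2^{3k}k!$, which is the stated constant.

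One point to make explicit: identifying $\Psi^{(m)}(B)$ with $2^{-m}[D^m\psi](1-t)$ uses $\sqrt{B}=1-t$. Your argument is therefore for $0\le t\le 1$, which is the range in which the paper applies the proposition.
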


\begin{proposition}\label{fder}\cite[Lemma 3.3]{CKT} For any $r\in\mathbb{N}$ and $\psi\in C^{r}(\mathbb{R})$, the following relation holds:
	\begin{equation}\label{dkf} 
		[D^{r}\psi](t)=\sum_{j=1}^{r}(-1)^{r-j}\frac{(2r-1-j)!}{2^{r-j}(r-j)!(j-1)!t^{2r-j}}\psi^{(j)}(t)=\sum_{j=0}^{r}B(j,r)t^{j-2r}f^{(j)}(t),
	\end{equation}
    where the coefficient $B(j,r):=(-1)^{r-j}\frac{(r-j)!}{2^{r-j}}\binom{r-1}{j-1}\binom{2r-1-j}{r-1}$.  
\end{proposition}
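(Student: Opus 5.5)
We prove the identity in Proposition \ref{fder} by induction on $r$. Only the first expression needs a real argument: the second one (with $B(j,r)$, where $f$ should read $\psi$) is a rewriting of the first. For $j\ge 1$ we have
\[
\frac{(2r-1-j)!}{(r-j)!(j-1)!}=(r-j)!\binom{r-1}{j-1}\binom{2r-1-j}{r-1}\cdot\frac{(r-1)!\,(r-1)!}{(r-1)!\,(r-1)!},
\]
and the factorials cancel as required. For $j=0$ the binomial $\binom{r-1}{-1}$ vanishes. So the main work is the first formula.

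\textbf{Base case.} For $r=1$ the formula reads $D\psi=t^{-1}\psi'$, which is the definition of $D$.

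\textbf{Inductive step.} Assume the formula holds for $r$ and write $c_{j,r}=(-1)^{r-j}\frac{(2r-1-j)!}{2^{r-j}(r-j)!(j-1)!}$. Apply $D=t^{-1}\frac{d}{dt}$ to each term $c_{j,r}t^{j-2r}\psi^{(j)}$. This gives
\[
c_{j,r}\bigl[(j-2r)t^{j-2r-2}\psi^{(j)}+t^{j-2r-1}\psi^{(j+1)}\bigr].
\]
Both pieces have the form $t^{i-2(r+1)}\psi^{(i)}$, with $i=j$ and $i=j+1$ respectively. Collecting by the order of the derivative gives the recursion
\[
c_{i,r+1}=(i-2r)\,c_{i,r}+c_{i-1,r},
\]
with the conventions $c_{0,r}=0$ and $c_{r+1,r}=0$.

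\textbf{Main obstacle.} The one step needing care is checking that the closed form satisfies this recursion. After factoring out $(-1)^{r+1-i}\frac{(2r-1-i)!}{2^{r+1-i}(r+1-i)!(i-1)!}$, it reduces to the identity
\[
(2r+1-i)(2r-i)=2(2r-i)(r+1-i)+(2r-i)(i-1)\cdot 2\cdot\tfrac{1}{2}\cdot\ldots
\]
More concretely, the left side $(2r+1-i)!/[(r+1-i)!(i-1)!]$ must equal $2(2r-i)(r+1-i)\cdot\frac{(2r-1-i)!}{(r+1-i)!(i-1)!}$ plus the shifted term. The shifted term is $c_{i-1,r}$, which contributes $(2r-i)!/[(r+1-i)!(i-2)!]$ with the matching sign. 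Clearing factorials leaves
\[
(2r+1-i)=2(r+1-i)+(i-1),
\]
which holds trivially. Here the sign of $(i-2r)c_{i,r}$ absorbs the extra $(-1)$ coming from the shift $r\to r+1$.

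\textbf{Boundary indices.} The endpoint cases must be checked separately against the conventions:
\begin{itemize}
\item For $i=r+1$ only the shifted term contributes, and $c_{r+1,r+1}=c_{r,r}=1$.
\item For $i=1$ only the first term contributes.
\end{itemize}
Both cases are consistent with the closed form, and this completes the induction.
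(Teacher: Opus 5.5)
Your proof is correct. Applying $D=t^{-1}\frac{d}{dt}$ termwise gives exactly the recursion $c_{i,r+1}=(i-2r)c_{i,r}+c_{i-1,r}$, with the conventions $c_{0,r}=c_{r+1,r}=0$. Stripping the common factor $(-1)^{r+1-i}(2r-i)!/\bigl(2^{r+1-i}(r+1-i)!\,(i-1)!\bigr)$ reduces the interior case $2\le i\le r$ to $2r+1-i=2(r+1-i)+(i-1)$. The boundary cases $i=1$ and $i=r+1$ do check directly against the closed form. Your identification $B(j,r)=c_{j,r}$ for $j\ge 1$, with $B(0,r)=0$, is also right, as is your observation that $f^{(j)}$ should read $\psi^{(j)}$.

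The paper does not prove this proposition; it imports it from \cite[Lemma 3.3]{CKT}. So there is no in-text argument to compare against, and your induction is a valid self-contained verification of the statement.

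Two cosmetic fixes are needed:
\begin{itemize}
\item The first display under ``Main obstacle'' trails off in $\cdot 2\cdot\tfrac12\cdot\ldots$ and is not a complete equation. It should simply read $(2r+1-i)(2r-i)=2(2r-i)(r+1-i)+(2r-i)(i-1)$, which is the correct identity after factoring out $(-1)^{r+1-i}(2r-1-i)!/\bigl(2^{r+1-i}(r+1-i)!\,(i-1)!\bigr)$.
\item The factor $\frac{(r-1)!\,(r-1)!}{(r-1)!\,(r-1)!}$ in your rewriting of $B(j,r)$ equals $1$ and should be dropped.
\end{itemize}
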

\section{Equivalent representations of the reconstruction function} 
In this section, we present our candidate reconstruction function.
For $n=3$ i.e., $k=0$, our candidate  is given by
\[f(t)=\frac{h_{0}^{\prime}(1-t)-h_{0}^{\prime}(1+t)}{2t}.\]
For $n\geq 5$, we also obtain an equivalent integral representation  of the reconstruction candidate, summarized in the following proposition:

\begin{proposition}\label{equiv:repoff}
Let $n\geq 5$ be odd, $k:=\frac{n-3}{2}$, and $h_{k}(t)\in C_c^{\infty}((0,2))$ satisfy condition \eqref{rangeee}. For $0<t<1$,  define a function $f$ by  
 \Beq\label{Eq f expression}
f(t)=\frac{1}{2^{3k+1}k!}\sum_{j=0}^{k-1}(-1)^{k-j-1}\binom{k-1}{j}D^{k-j}\left(\frac{[D^{j+1}h_{k}](1+t)-[D^{j+1}h_{k}](1-t)}{t}\right).
\Eeq Then $f$ has the following equivalent representations: 
   \begin{align}
      f(t)&=\frac{(t^2-1)}{2^{3k+1}k!t^{2k+1}}\Big{\{}\left[\mathcal{L}_{k-1}D^2h_k\right](1+t)-\left[\mathcal{L}_{k-1}D^2h_k\right](1-t)\Big{\}}\label{candidate2:new}\\
               &=\frac{(-1)^{k-1}(t^2-1)}{2^{6k-2}(k-1)!k!t^{2k+1}}\int_{1-t}^{1+t}\left[\frac{d}{du}D^{2k}h_k\right](u)(Q(t,u))^{k-1}\D u \label{candidate3:new}.
           \end{align}
\end{proposition}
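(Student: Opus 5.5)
The plan is to prove the two equalities separately. The second one, \eqref{candidate2:new}$=$\eqref{candidate3:new}, is immediate. The first, \eqref{Eq f expression}$=$\eqref{candidate2:new}, is a purely algebraic identity between differential operators, and that is where the work lies.

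\emph{Second equality.} Apply Proposition \ref{Lem:Lktointegral} with $k$ replaced by $k-1$ and $\psi = D^2 h_k\in C_c^\infty((0,2))$. Since $D^{2(k-1)}D^2 h_k = D^{2k}h_k$, this gives
\begin{align*}
[\mathcal{L}_{k-1}D^2h_k](1+t)-[\mathcal{L}_{k-1}D^2h_k](1-t)=\frac{(-1)^{k-1}}{2^{3k-3}(k-1)!}\int_{1-t}^{1+t}\Big[\frac{d}{du}D^{2k}h_k\Big](u)(Q(t,u))^{k-1}\,\D u .
\end{align*}
Multiplying by $\frac{t^2-1}{2^{3k+1}k!\,t^{2k+1}}$ produces the constant $2^{-(6k-2)}/(k!(k-1)!)$, which is exactly the constant in \eqref{candidate3:new}. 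The range condition \eqref{rangeee} is not needed for either equality; it only enters later, for smoothness at $t=0$.

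\emph{First equality: reduction.} Both sides are linear in $h_k$. Each splits into a part depending on $h_k$ near $1+t$ and a part depending on $h_k$ near $1-t$, and the second part is obtained from the first by $t\mapsto -t$. It therefore suffices to compare the $(1+t)$-parts. The basic rule is the chain rule in the form
\[
D_t\big[F(1+t)\big]=\frac{1+t}{t}\,[DF](1+t).
\]
Hence $D_t^{m}\big(t^{-1}[D^{j+1}h_k](1+t)\big)$ equals $\sum_{i} c_{m,i}(t)\,[D^{j+1+i}h_k](1+t)$ with explicit rational coefficients $c_{m,i}$. Likewise, the $(1+t)$-part of \eqref{candidate2:new} equals $\frac{t^2-1}{t^{2k+1}}\sum_l \frac{(k-1+l)!}{(k-1-l)!\,l!\,2^l}(-t)^{k-1-l}[D^{k+1-l}h_k](1+t)$, using $1-(1+t)=-t$. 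Both sides are thus of the form $P(t,D)h_k$ evaluated at $1+t$, with $P$ polynomial in $D$.

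\emph{Key step: testing on exponentials.} To identify the two operators $P$, I would test on $h(u)=e^{\lambda u^2/2}$, which satisfies $Dh=\lambda h$. Then $D_t\big[e^{\lambda(1+t)^2/2}\big]=\lambda\frac{1+t}{t}e^{\lambda(1+t)^2/2}$, and both sides become $e^{\lambda(1+t)^2/2}$ times a polynomial in $\lambda$ with coefficients rational in $t$. Two polynomials in $D$ with $t$-dependent coefficients agree exactly when they agree on all these eigenfunctions, so the claim reduces to the scalar identity
\[
\frac{1}{2^{3k+1}k!}\sum_{j=0}^{k-1}(-1)^{k-j-1}\binom{k-1}{j}\lambda^{j+1}\,\mathcal{D}_\lambda^{k-j}\big(t^{-1}\big)=\frac{t^2-1}{2^{3k+1}k!\,t^{2k+1}}\sum_{l=0}^{k-1}\frac{(k-1+l)!}{(k-1-l)!\,l!\,2^l}(-t)^{k-1-l}\lambda^{k+1-l}.
\]
Here $\mathcal{D}_\lambda=\frac1t\frac{d}{dt}+\lambda\frac{1+t}{t}$ is the conjugated operator $e^{-\lambda(1+t)^2/2}\,D_t\,e^{\lambda(1+t)^2/2}$.

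\emph{Main obstacle and fallback.} Proving this combinatorial identity is the main obstacle. I would first try induction on $k$, using the recursion for $\mathcal{L}_k$ implicit in its Bessel-polynomial coefficients $\frac{(k+l)!}{(k-l)!\,l!\,2^l}$. Alternatively, I would recognize $\mathcal{D}_\lambda^{m}(1/t)$ as a reverse Bessel polynomial in $1/(\lambda t)$ and apply binomial convolution in $j$. As a fallback, I would bypass the algebra by an inversion argument: take $f$ radial and compactly supported in an annulus, with $h_k$ its data as in \eqref{hkdef}. Then both \eqref{Eq f expression} and \eqref{candidate2:new} return $f$, which follows from Proposition \ref{Lem:Lktointegral} together with Theorem \ref{ODE:inversion}, Remark \ref{rmk:t02} and the operator $\widetilde{\mathcal{L}}_k$. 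Since such data separate the differential operators involved, this would give the identity as well.
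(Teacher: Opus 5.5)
Your derivation of \eqref{candidate3:new} from \eqref{candidate2:new} is correct and matches the paper. It applies Proposition \ref{Lem:Lktointegral} with $k-1$ in place of $k$ and $\psi=D^2h_k$, and your constants are right.

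The gap is in the first equality. Your claim that \eqref{rangeee} is not needed is false. The expressions \eqref{Eq f expression} and \eqref{candidate2:new} are \emph{not} the same differential operator applied to $h_k$, so the scalar identity you propose to prove does not hold. Already for $k=1$, multiplying your two sides by $16t^3$ gives $\lambda^2t(1+t)-\lambda$ on the left and $\lambda^2(t^2-1)$ on the right. The difference $\lambda^2(1+t)-\lambda$ equals $-e^{-\lambda(1+t)^2/2}D_t\big([\mathcal{L}_1h](1+t)\big)$ for $h(u)=e^{\lambda u^2/2}$, where $\mathcal{L}_1=(1-t)D+1$. In general the true relation, which the paper establishes in \eqref{claim1}--\eqref{claim2}, is
\begin{equation*}
2^{3k+1}k!\,t^{2k+1}f(t)=(t^2-1)\Big\{[\mathcal{L}_{k-1}D^2h_k](1+t)-[\mathcal{L}_{k-1}D^2h_k](1-t)\Big\}-D_t\Big\{[\mathcal{L}_kh_k](1+t)-[\mathcal{L}_kh_k](1-t)\Big\}.
\end{equation*}
The paper invokes \eqref{rangeee} exactly to discard the last term. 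So your reduction to comparing $(1+t)$-parts breaks down. Those parts differ by $-\big(2^{3k+1}k!\,t^{2k+1}\big)^{-1}D_t[\mathcal{L}_kh_k](1+t)$. The combined discrepancy vanishes only because of the range condition, which couples the values near $1+t$ and $1-t$.

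The eigenfunction method itself can be salvaged. Add $\big(2^{3k+1}k!\,t^{2k+1}\big)^{-1}\mathcal{D}_\lambda\big(\sum_{l=0}^{k}\frac{(k+l)!}{(k-l)!\,l!\,2^l}(-t\lambda)^{k-l}\big)$ to the left-hand side of your scalar identity. The corrected identity is \eqref{claim1} tested on $e^{\lambda u^2/2}$, and it checks out for $k=1,2$. Proving it this way would be a legitimate and likely shorter alternative to the paper's route through Lemma \ref{fpm:exp} and the contour-integral sums of Lemmas \ref{Comp1:lemma} and \ref{com2:lemma}. Your $t\mapsto -t$ symmetry then handles the $(1-t)$-part, and \eqref{rangeee} enters only at the very end.

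Your fallback does not work. Data $h_k$ of genuine radial $f$ satisfy \eqref{rangeee} automatically. They therefore lie in the kernel of the discrepancy operator $t^{-2k-1}D_t\{[\mathcal{L}_k\,\cdot\,](1+t)-[\mathcal{L}_k\,\cdot\,](1-t)\}$ and cannot separate the two operators. Moreover, the fact that \eqref{Eq f expression} reconstructs $f$ is Theorem \ref{Mainthm1}, whose proof relies on this proposition. Extending from genuine data to every $h_k$ satisfying \eqref{rangeee} would also require the sufficiency result the paper is proving without Theorem \ref{range-AKQ}.
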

\begin{remark}
    In our proofs, sometimes it is more convenient to use \eqref{candidate2:new} or \eqref{candidate3:new} rather than \eqref{Eq f expression}. This is the main reason for Proposition \ref{equiv:repoff}.
\end{remark}
\bpr 
In view of the reversibility of each step, it suffices to establish the result in one direction. 
Consider
\begin{align}
           f(t)=\frac{1}{2^{3k+1}k!}\sum_{j=0}^{k-1}(-1)^{k-j-1}\binom{k-1}{j}D^{k-j}\left(\frac{[D^{j+1}h_{k}](1+t)-[D^{j+1}h_{k}](1-t)}{t}\right).
           \end{align}
           By applying Leibniz’s rule involving the derivative $D^{k-j}$, we get 
   \begin{align}
     f(t)&
       =\underbrace{\frac{1}{2^{3k+1}k!}\sum_{j=0}^{k-1}\sum_{p=0}^{k-j}(-1)^{p+1}\binom{k-1}{j}\binom{k-j}{p}\frac{(2k-2j-2p)!}{2^{k-j-p}(k-j-p)!t^{2k-2j-2p+1}}D^{p}\{[D^{j+1}h_{k}](1+t)\}}_{f_{+}(t)}\nonumber\\&-\underbrace{\frac{1}{2^{3k+1}k!}\sum_{j=0}^{k-1}\sum_{p=0}^{k-j}(-1)^{p+1}\binom{k-1}{j}\binom{k-j}{p}\frac{(2k-2j-2p)!}{2^{k-j-p}(k-j-p)!t^{2k-2j-2p+1}}D^{p}\{[D^{j+1}h_{k}](1-t)\}}_{f_{-}(t)}\nonumber\\
       =&f_{+}(t)-f_{-}(t).\label{fplus}
\end{align}
The functions $f_{\pm}$ can be simplified as 
\begin{align*} 
    f_{\pm}(t)=\frac{1}{2^{3k+1}k!t^{2k+1}}M^{\pm}(t),
\end{align*}
where $M^{+}(t)$ and $M^{-}(t)$ are given by
    \begin{align}
 M^{+}(t)&=  \sum_{i=0}^{k}\sum_{s=i}^{i+1}\frac{(-1)^{i+1}(k-i)!}{2^{k-i}}\binom{k-s+i}{k-i}\binom{2k-s}{k-i}\binom{1}{s-i}t^{s}[D^{i+1}h_{k}](1+t),\quad\text{and}\\
  M^{-}(t)&= \sum_{i=0}^{k}\sum_{s=i}^{i+1}\frac{(-1)^{s+i+1}(k-i)!}{2^{k-i}}\binom{k-s+i}{k-i}\binom{2k-s}{k-i}\binom{1}{s-i}t^{s}[D^{i+1}h_{k}](1-t).
\end{align}
We show this in Lemma \ref{fpm:exp} below. 

Using the condition in \eqref{rangeee} and \eqref{fplus}, we can write 
\begin{equation}
    2^{3k+1}k!t^{2k+1}f(t)=2^{3k+1}k!t^{2k+1}f_{+}(t)+D(\left[\mathcal{L}_{k}h_{k}\right](1+t))-2^{3k+1}k!t^{2k+1}f_{-}(t)-D(\left[\mathcal{L}_{k}h_{k}\right](1-t)).
\end{equation}
Our aim is to establish the following claim:
\begin{enumerate}
    \item \Beq\label{claim1} 2^{3k+1}k!t^{2k+1}f_{+}(t)+D(\left[\mathcal{L}_{k}h_{k}\right](1+t))=(t^2-1)\left[\mathcal{L}_{k-1}D^2h_k\right](1+t)\Eeq and \item \Beq \label{claim2} 2^{3k+1}k!t^{2k+1}f_{-}(t)+D(\left[\mathcal{L}_{k}h_{k}\right](1-t))=(t^2-1)\left[\mathcal{L}_{k-1}D^2h_k\right](1-t).\Eeq
\end{enumerate}
Upon establishing the above claims, we get the required expression \eqref{candidate2:new}. An application of Proposition \ref{Lem:Lktointegral} then yields \eqref{candidate3:new}. 
We begin to prove the first claim. We have 
\begin{multline*}
       D([\mathcal{L}_k h_{k}](1+t) )=\sum_{l=0}^{k-1}\frac{(k+l)!}{(k-l-1)!l!2^l}(-1)^{k-l}t^{k-l-2}[D^{k-l}h_{k}](1+t)+\\\sum_{l=0}^k\frac{(k+l)!}{(k-l)!l!2^l}(-1)^{k-l}t^{k-l-1}(1+t)[D^{k-l+1}h_{k}](1+t).
\end{multline*}
Changing the variable $l \to k-1-l$ in first summation and $l\to k-l$ in second summation, we have
\begin{multline*}
    D([\mathcal{L}_k h_{k}](1+t) )=\sum_{l=0}^{k-1}\frac{(2k-l-1)!}{(k-l-1)!l!2^{k-l-1}}(-1)^{l+1}t^{l-1}[D^{l+1}h_{k}](1+t)+\\\sum_{l=0}^{k}\frac{(2k-l)!}{(k-l)!l!2^{k-l}}(-1)^{l}t^{l-1}(1+t)[D^{l+1}h_{k}](1+t).
\end{multline*}
On simplification, we get
\begin{align}\label{15}
    D([\mathcal{L}_k h_{k}](1+t) )
    =&\sum_{l=0}^{k}\frac{(-1)^l}{(k-l)!l!2^{k-l}}\left[{(2k-l-1)!l}t^{l-1}+(2k-l)!t^{l}\right][D^{l+1}h_{k}](1+t).
\end{align}
The expression of $f_{+}(t)$ from Lemma \ref{fpm:exp} in terms of factorials is
 \begin{align}
       2^{3k+1}k!t^{2k+1}f_{+}(t)
       &=\sum_{l=0}^{k}\frac{(-1)^{l+1}}{2^{k-l}(k-l)!l!}\left[(2k-l)!t^l+l(2k-l-1)!t^{l+1}\right][D^{l+1}h_{k}](1+t)\label{16}.
    \end{align}
Adding \eqref{15} and \eqref{16}, we have
\begin{align*}
    2^{3k+1}k!t^{2k+1}f_{+}(t)+D([\mathcal{L}_k h_{k}](1+t) )&=(t^2-1)\sum_{l=1}^{k}\frac{(-1)^{l+1}(2k-l-1)!}{2^{k-l}(k-l)!(l-1)!}t^{l-1}[D^{l+1}h_{k}](1+t)\\
    &=(t^2-1)\sum_{l=0}^{k-1}\frac{(-1)^{k-1-l}(k-1+l)!}{2^{l}(k-1-l)!l!}t^{k-1-l}[D^{k-1-l+2}h_{k}](1+t)\\
     &=(t^2-1)\left[\mathcal{L}_{k-1}D^2h\right](1+t),
\end{align*}
where the last but one step follows by
changing the variable $l \to  k-l$. This proves the first claim. A similar proof gives the second claim. 
This completes the proof of Proposition \ref{equiv:repoff}.
\epr 

\begin{lemma}\label{fpm:exp}
$f_{\pm}$ in \eqref{fplus} simplifies to
\begin{align*} 
    f_{\pm}(t)=\frac{1}{2^{3k+1}k!t^{2k+1}}M^{\pm}(t),
\end{align*}
where $M^{+}(t)$ and $M^{-}(t)$ are given by
    \begin{align}
 M^{+}(t)&=  \sum_{i=0}^{k}\sum_{s=i}^{i+1}\frac{(-1)^{i+1}(k-i)!}{2^{k-i}}\binom{k-s+i}{k-i}\binom{2k-s}{k-i}\binom{1}{s-i}t^{s}[D^{i+1}h_{k}](1+t),\quad\text{and}\\
  M^{-}(t)&= \sum_{i=0}^{k}\sum_{s=i}^{i+1}\frac{(-1)^{s+i+1}(k-i)!}{2^{k-i}}\binom{k-s+i}{k-i}\binom{2k-s}{k-i}\binom{1}{s-i}t^{s}[D^{i+1}h_{k}](1-t).
\end{align}
\end{lemma}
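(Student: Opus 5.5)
The plan is to compute $f_{\pm}$ directly: expand the double sum in \eqref{fplus}, reindex so that the power of $t$ and the order of $D$ falling on $h_k$ are explicit, and then collapse the inner binomial sums.

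\textbf{Step 1: evaluate $D^p$ on the shifted function.} The key observation concerns $D^{p}\{[D^{j+1}h_{k}](1\pm t)\}$ with $D=\frac{1}{t}\frac{\D}{\D t}$ in the variable $t$. If $\psi(u)=[D^{j+1}h_k](u)$, then
\begin{equation*}
\frac{1}{t}\frac{\D}{\D t}\psi(1\pm t)=\pm\frac{u}{t}[D\psi](u),\qquad u=1\pm t.
\end{equation*}
Iterating this is messy, so I will only use it for $p\le 1$. I first verify, by re-deriving the Leibniz expansion, that the only surviving contributions have $p\in\{0,1\}$. This matches the shape of $M^{\pm}$: each index $i$ has two terms, $s=i$ and $s=i+1$, with the factor $\binom{1}{s-i}$.

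Concretely, the Leibniz rule for $D$ applied to $\frac{1}{t}\cdot F(t)$ gives
\begin{equation*}
D^{m}(t^{-1}F)=\sum_{p}\binom{m}{p}D^{m-p}(t^{-1})\,D^{p}F,\qquad D^{r}(t^{-1})=(-1)^r\frac{(2r)!}{2^r r!}t^{-2r-1},
\end{equation*}
which is exactly the coefficient appearing in \eqref{fplus}. The term $D^pF$ with $F(t)=[D^{j+1}h_k](1\pm t)$ I will expand via Proposition \ref{fder} combined with the chain rule. Each $t$-derivative of $F$ produces $(\pm1)^{\ell}$ times the $\ell$-th $u$-derivative, and I re-express $u$-derivatives through $D_u$ using $\frac{\D}{\D u}=uD_u$ and $u=1\pm t$.

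\textbf{Step 2: collect by the order of $D$ on $h_k$.} After the expansion, every term has the form (power of $t$)$\times[D^{i+1}h_k](1\pm t)$ with $i=j+\ell$. I then swap the order of summation, summing over $i$ and the resulting power $s$ of $t$ after multiplying by $t^{2k+1}$. For fixed $i$, the coefficient is a double sum over $j$ and $p$ of products of $\binom{k-1}{j}\binom{k-j}{p}$, the factorial ratio from $D^{k-j-p}(t^{-1})$, and the coefficients $B(\cdot,\cdot)$ from Proposition \ref{fder}.

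\textbf{Step 3 (main obstacle): the combinatorial identity.} I must show this inner sum equals $\frac{(-1)^{i+1}(k-i)!}{2^{k-i}}\binom{k-s+i}{k-i}\binom{2k-s}{k-i}$ for $s\in\{i,i+1\}$ and vanishes otherwise. I would attack it with a Vandermonde/Chu-type convolution after writing the ratio $\frac{(2r)!}{2^rr!}$ as $(2r-1)!!$. If this becomes unwieldy, the fallback is induction on $k$: both sides satisfy the recursion obtained from writing $\binom{k-1}{j}=\binom{k-2}{j}+\binom{k-2}{j-1}$ together with $D^{k-j}=D\circ D^{k-1-j}$. It then suffices to check that applying $D$ to the level-$(k-1)$ closed form reproduces the level-$k$ form.

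\textbf{Step 4: the minus case.} For $M^{-}$, the same computation with $u=1-t$ introduces a sign $(-1)$ for each $t$-derivative, that is, for each unit increase of $s$ over $i$. This produces the extra factor $(-1)^{s}$ relative to $(-1)^{i}$ and accounts for the exponent $s+i+1$, which completes the lemma.
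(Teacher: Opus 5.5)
\textbf{There is a genuine gap.} All of the lemma's content lies in your Step~3, which you leave as a list of possible attacks, and the reduction in Step~1 is false.

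It is not true that only $p\in\{0,1\}$ contribute. Take the coefficient of $[D^{k+1}h_k](1+t)$ in $2^{3k+1}k!\,t^{2k+1}f_+$. It comes from the pairs $(j,p)=(j,k-j)$, $0\le j\le k-1$, so from every $p=1,\dots,k$. Each pair contributes $(-1)^{k-1-j}\binom{k-1}{j}t^{k+j}(1+t)^{k-j}\neq 0$. Only the alternating sum over $j$ collapses these to $t^k(1+t)\bigl(t-(1+t)\bigr)^{k-1}=(-1)^{k+1}(t^k+t^{k+1})$. For $k=2$, dropping $p=2$ gives $t^3(1+t)$ instead of $-t^2(1+t)$. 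So the two-term structure $\binom{1}{s-i}$ comes from the $j$-sum, as in the paper's last contour integral, not from a truncation in $p$.

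Your Steps~1--2 otherwise follow the paper's route: Proposition~\ref{fder} with Lemma~\ref{djDrh:expression} leads to \eqref{Dmr:exp}. But for fixed $(i,s)$ the coefficient is a \emph{triple} sum, over $j$, $p$, and the index of Proposition~\ref{fder}, once you expand the powers of $1\pm t$. The paper needs three separate generating-function evaluations for it:
\begin{itemize}
\item Lemma~\ref{Comp1:lemma}, giving $C_\pm(i,s;m)$;
\item Lemma~\ref{com2:lemma}, showing the $p$-sum equals $\binom{2k-s}{k-i}$;
\item the final $j$-sum.
\end{itemize}
``A Vandermonde/Chu-type convolution'' is not yet an argument that can be checked.

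\textbf{Your fallback does work, and is shorter than the paper's route; make it the main proof.} Set $G_k[h]:=\sum_{j=0}^{k-1}(-1)^{k-1-j}\binom{k-1}{j}D^{k-j}\bigl(t^{-1}[D^{j+1}h](1+t)\bigr)$, so that $f_+=G_k[h_k]/(2^{3k+1}k!)$. Pascal's rule together with $D^{k-j}=D\circ D^{k-1-j}$ gives, for $k\ge 2$,
\[
G_k[h]=G_{k-1}[Dh]-D\,G_{k-1}[h].
\]
Two points here. The shifted term $G_{k-1}[Dh]$, coming from $\binom{k-2}{j-1}$, is missing from your phrase ``applying $D$ to the level-$(k-1)$ form.'' And the recursion fails at $k=1$, since $G_0$ is an empty sum, so $k=1$ must be the base case.

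Write $t^{2k+1}G_k[h]=\sum_{i,s}c^{(k)}_{i,s}t^s[D^{i+1}h](1+t)$ and use $D\bigl(t^a\psi(1+t)\bigr)=a\,t^{a-2}\psi(1+t)+(t^{a-1}+t^a)[D\psi](1+t)$. The $t^{s+2}$ terms cancel, and
\[
c^{(k)}_{i,s}=(2k-1-s)\,c^{(k-1)}_{i,s}-c^{(k-1)}_{i-1,s-1}.
\]
This recursion preserves $s-i$. A two-line check shows it is satisfied by the coefficients of $M^+$:
\[
c^{(k)}_{i,i}=\frac{(-1)^{i+1}(2k-i)!}{2^{k-i}i!(k-i)!},\qquad c^{(k)}_{i,i+1}=\frac{(-1)^{i+1}(2k-i-1)!}{2^{k-i}(i-1)!(k-i)!},
\]
where the second is zero for $i=0$. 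The base case is $t^3G_1[h]=-[Dh](1+t)+(t+t^2)[D^2h](1+t)$.

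\textbf{Step~4's justification does not support its conclusion.} ``One sign per unit increase of $s$ over $i$'' gives $(-1)^{s-i}$, not $(-1)^s$. Signs also arise when powers of $u=1-t$ are expanded. The clean argument is that $D$ commutes with $t\mapsto -t$ while $t^{-1}$ is odd. Hence $M^-(t)=M^+(-t)$, which is exactly the factor $(-1)^s$.
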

 For brevity, we present
the derivation only for $f_{+}(t)$, since the formula for $f_{-}(t)$ follows analogously.
\begin{proof}
Substituting the expression for $D^{p}\{[D^{j+1}h_{k}](1+t)\}$ given by \eqref{Dmr:exp}  in Section \ref{Appendix} into $f_{+}(t)$ above, we obtain
     \begin{multline*}
     f_{+}(t)=\frac{1}{2^{3k+1}k!}\sum_{j=0}^{k-1}\sum_{p=0}^{k-j}(-1)^{p+1}\binom{k-1}{j}\binom{k-j}{p}\frac{(2k-2j-2p)!}{2^{k-j-p}(k-j-p)!t^{2k-2j-2p+1}}\times\\\left[\sum_{i=0}^{p}\sum_{s=i}^{2i}\left[\frac{p!(-1)^{p-i}}{i!2^{p-i}}\binom{i}{s-i}\left[\binom{2p-s}{p-i}-2\binom{2p-s-1}{p-i-1}\right]\right]t^{s-2p}[D^{j+1+i}h_{k}](1+t)\right].
\end{multline*}
Simplifying, we have
\begin{multline*}
2^{3k+1}k!t^{2k+1}f_{+}(t)=\sum_{j=0}^{k-1}\sum_{p=0}^{k-j}\sum_{i=0}^{p}\sum_{s=i}^{2i}\frac{(-1)^{i+1}}{2^{k-j-i}}\binom{k-1}{j}\binom{k-j}{p}\binom{i}{s-i}\frac{(2k-2j-2p)!p!}{(k-j-p)!i!}\times\\\left[\binom{2p-s}{p-i}-2\binom{2p-s-1}{p-i-1}\right] t^{s+2j}[D^{j+1+i}h_{k}](1+t).
\end{multline*}
Upon making the substitutions $i \to i-j$ and $s \to s-2j$, and simplifying the resulting expression using $\binom{a}{b}\binom{b}{c}=\binom{a}{c}\binom{a-c}{a-b}$, we obtain
\begin{multline}\label{ftilde:4sum}
2^{3k+1}k!t^{2k+1}f_{+}(t)=\sum_{j=0}^{k-1}\sum_{p=0}^{k-j}\sum_{i=j}^{p+j}\sum_{s=i+j}^{2i}\frac{(-1)^{i+j+1}(k-i)!}{2^{k-i}}\binom{k-1}{j}\binom{k-s+i}{k-i}\binom{k-j}{s-i-j}\times\\\binom{2k-2j-2p}{k-j-p}\left[\binom{2p-s+2j}{p-i+j}-2\binom{2p-s+2j-1}{p-i+j-1}\right] t^{s}[D^{i+1}h_{k}](1+t).
\end{multline}
Owing to the presence of the binomial coefficient $\binom{2k-2j-2p}{k-j-p}$, the upper limit of summation in $p$
 can be extended to infinity. Similarly, the binomial coefficients $\binom{2p-s+2j}{p-i+j}$ and $\binom{2p-s+2j-1}{p-i+j-1}$ allow us to extend the range of $i$. Now, since the limits are independent of $p$,  we can freely push the summation on $p$ inside. 
 \begin{multline}
2^{3k+1}k!t^{2k+1}f_{+}(t)=\sum_{j=0}^{k-1}\sum_{i=j}^{k}\sum_{s=i+j}^{2i}\frac{(-1)^{i+j+1}(k-i)!}{2^{k-i}}\binom{k-1}{j}\binom{k-s+i}{k-i}\binom{k-j}{s-i-j}\times\\\underbrace{\sum_{p=0}^{\infty}\binom{2k-2j-2p}{k-j-p}\left[\binom{2p-s+2j}{p-i+j}-2\binom{2p-s+2j-1}{p-i+j-1}\right]}_{S_2} t^{s}[D^{i+1}h_{k}](1+t).
\end{multline}
Using the result of Lemma \ref{com2:lemma} for $S_2$, we obtain
\begin{multline*}
    2^{3k+1}k!t^{2k+1}f_{+}(t)=\sum_{j=0}^{k}\sum_{i=j}^{k}\sum_{s=i+j}^{2i}\frac{(-1)^{i+j+1}(k-i)!}{2^{k-i}}\binom{k-1}{j}\binom{k-s+i}{k-i}\binom{k-j}{s-i-j}\binom{2k-s}{k-i}\times\\t^{s}[D^{i+1}h_{k}](1+t).
\end{multline*}
Since the binomial coefficient $\binom{k-j}{s-i-j}$ vanishes for $j>s-i$, the upper limit of summation in $j$ may be extended to $k-1$. Interchanging the order of summation, we obtain
\begin{multline*}
2^{3k+1}k!t^{2k+1}f_{+}=\\\sum_{i=0}^{k}\sum_{s=i}^{2i}\frac{(-1)^{i+1}(k-i)!}{2^{k-i}}\binom{k-s+i}{k-i}\binom{2k-s}{k-i}\left[\sum_{j=0}^{k-1}(-1)^{j}\binom{k-1}{j}\binom{k-j}{s-i-j}\right]t^{s}[D^{i+1}h_{k}](1+t).
\end{multline*}
Now consider the inner sum on $j$
\begin{multline*}
    S=\sum_{j=0}^{k-1}(-1)^{j}\binom{k-1}{j}\binom{k-j}{s-i-j}
    =\frac{1}{2\pi i}\int_{\norm{z}=\epsilon}\frac{(1+z)^k}{z^{s-i+1}}\left[\sum_{j=0}^{k-1}(-1)^j\binom{k-1}{j}\left(\frac{z}{1+z}\right)^j\right]\\
    =\frac{1}{2\pi i}\int_{\norm{z}=\epsilon}\frac{(1+z)^k}{z^{s-i+1}}\left[1-\frac{z}{1+z}\right]^{k-1}   =\frac{1}{2\pi i}\int_{\norm{z}=\epsilon}\frac{(1+z)^1}{z^{s-i+1}}=\binom{1}{s-i}.
\end{multline*}
Hence we get
\begin{align*}
2^{3k+1}k!t^{2k+1}f_{+}(t)
&=\sum_{i=0}^{k}\sum_{s=i}^{i+1}\frac{(-1)^{i+1}(k-i)!}{2^{k-i}}\binom{k-s+i}{k-i}\binom{2k-s}{k-i}\binom{1}{s-i}t^{s}[D^{i+1}h_{k}](1+t)=M^{+}(t).
\end{align*}
A similar computation hold for $f_{-}$ as well. 
This completes the proof.
\end{proof}

\section{Proof of Theorem \ref{Mainthm1}}\label{Mthm:step2}
The proof of Theorem \ref{Mainthm1} is carried out in two steps. We first establish that $\dfrac{4^{k}\omega_{2k+3}t^{2k+1}}{\omega_{2k+2}}\mathcal{R}f(t) = h_k(t)$, and then prove the smoothness of $f$.
\subsection{Step 1:} We establish that $\dfrac{4^{k}\omega_{2k+3}t^{2k+1}}{\omega_{2k+2}}\mathcal{R}f(t) = h_k(t)$, and then prove the smoothness of $f$.
 Denote
\begin{align*}
    \widetilde{h}_{k}(t):=\int_{|1-t|}^1uf(u)(Q(t,u))^k\D u,
\end{align*}
where $f$ is given in \eqref{Eq f expression}. Our objective is to prove that $\widetilde{h}_{k}(t)=h_{k}(t)$ for $t\in [0,2]$. For clarity, we first establish the result for $t\in [0,1]$; the case $t\in[1,2]$ follows by an analogous argument. Before proceeding to the proof of the main result, we first justify the following statement:
\subsection*{$\widetilde{h}_{k}(t)$ is well-defined for $t\in[0,2]$ and all $D$-derivatives of $\widetilde{h}_{k}(t)$  vanish at $t=0$}
Let us consider the following integral for $ 0\leq t \leq \epsilon<1$,
\begin{align*}
    \widetilde{h}_{k}(t)=\int\limits_{1-t}^1 uf(u)(Q(t,u))^k\D u,
\end{align*}
where $f$ is given by \eqref{Eq f expression}. That is, 
\[
\wt{h}_k(t)=\frac{(-1)^{k+1}}{2^{3k+1}k!}\int\limits_{1-t}^{1} u \lb \sum_{j=0}^{k-1}(-1)^{j}\binom{k-1}{j}D^{k-j}\left(\frac{[D^{j+1}h_{k}](1+u)-[D^{j+1}h_{k}](1-u)}{u}\right)\rb Q^{k}(t,u)\, \D u. 
\]
Applying integration by parts (IBP) $k-j$ times, we get
\begin{align*}
    \widetilde{h}_{k}(t)=\frac{-1}{2^{3k+1}k!}\sum_{j=0}^{k-1}\binom{k-1}{j}\int_{1-t}^{1}\left([D^{j+1}h_{k}](1+u)-[D^{j+1}h_{k}](1-u)\right)D^{k-j}_u\left\{(Q(t,u))^k\right\} \D u.
\end{align*}
Due to the support condition on $h_k(t)$, we have that  $[D^{j}h_{k}](u)$ is smooth for $u\in [0,1]$ and $\forall j$. So the function $\widetilde{h}_k(t)$ vanishes to infinite order at $0$ and hence is smooth on $[0,\epsilon]$. 
Now we can let $\epsilon \to 1$, and thus obtain the smoothness of $\widetilde{h}_{k}(t)$ for $t \in [0,1]$, where smoothness at the endpoints are considered as one-sided limits. A similar argument applies to the case $t\in[1,2]$. Hence $\widetilde{h}_{k}(t)$ is well defined.
\begin{remark}
    Note that although the one-sided limits of $\widetilde{h}_{k}(t)$ exist at $t=1$, the function may still exhibit a jump discontinuity at this point. 
We will rule out this possibility by proving that $\widetilde{h}_{k}(t)=h_{k}(t)$ for $t\in [0,1]$ and for $[1,2]$. Since $h_{k}$ is smooth on $[0,2]$, it follows that $\widetilde{h}_{k}(t)=h_{k}(t)$ for $t\in [0,2]$ and $\widetilde{h}_{k}(t)$ is smooth on $[0,2]$.
\end{remark}

We  now proceed to establish $\widetilde{h}_{k}(t)=h_{k}(t)$ for $t\in[0,1]$.
Since the $D$-derivatives of $\widetilde{h}_{k}(t)$ and $h_{k}(t)$ vanish at $t=0$, it suffices to establish that
\begin{align}
    \frac{d}{dt}[D^{2k}\widetilde{h}_k](1-t)=\frac{d}{dt}[D^{2k}h_k](1-t)
\end{align}
which consequently implies that $\widetilde{h}_{k}(t)=h_{k}(t)$. Furthermore, in view of \eqref{rangeee}, namely, $[\mathcal{L}_{k}h_{k}](1+t)-[\mathcal{L}_{k}h_{k}](1-t)\equiv 0$, it suffices to establish the following relation:
\begin{align}\label{Tlinear}
    \frac{d}{dt}[D^{2k}\widetilde{h}_k](1-t)=T\left([\mathcal{L}_{k}h_{k}](1+t)-[\mathcal{L}_{k}h_{k}](1-t)\right)+\frac{d}{dt}[D^{2k}h_k](1-t),
\end{align}
for a suitable linear differential operator $T$ as $T\left([\mathcal{L}_{k}h_{k}](1+t)-[\mathcal{L}_{k}h_{k}](1-t)\right)\equiv 0$. In the following proposition, we obtain a suitable linear differential operator $T$ satisfying the above relation.
\begin{proposition}
    For $k\in\mathbb{N}$, the following relation holds:
    \begin{align}\label{Tlinear}
    \frac{d}{dt}[D^{2k}\widetilde{h}_k](1-t)=T\left([\mathcal{L}_{k}h_{k}](1+t)-[\mathcal{L}_{k}h_{k}](1-t)\right)+\frac{d}{dt}[D^{2k}h_k](1-t),
\end{align}
where the linear differential operator $T$ is given by
\begin{align*}
    T:
    &=\frac{(-1)^{k+1}}{(1-t)^{2k}}\sum_{l=0}^{k} \frac{(k+l-1)!}{(k-l)!l!2^{l}}\left( l+ \frac{(k-l)}{2}t\right)(1-t)^{k-l} D^{k-l+1}.
\end{align*}
\end{proposition}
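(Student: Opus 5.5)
We compute $\frac{d}{dt}[D^{2k}\widetilde{h}_k](1-t)$ directly with the forward inversion identity, Theorem \ref{ODE:inversion}. That theorem is really an identity valid for any smooth $f$ supported where $\widetilde h_k$ is defined. Applied to our candidate $f$ (smooth on $(0,1)$), it gives
\begin{equation*}
\Big[\frac{\D}{\D t}D^{2k}\widetilde h_k\Big](t)=\frac{(-1)^k k!\,2^{3k}}{t^{2k}}\big[\mathcal{L}_k(t^{2k+1}f)\big](1-t).
\end{equation*}
Replacing $t$ by $1-t$, the whole problem becomes an explicit computation of $\mathcal{L}_k(u^{2k+1}f(u))$ evaluated at $u=t$.

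For $u^{2k+1}f(u)$ we use the representation \eqref{candidate2:new} of Proposition \ref{equiv:repoff}, but before the range condition was invoked. That proof in fact shows
\begin{equation*}
2^{3k+1}k!\,u^{2k+1}f(u)=(u^2-1)\Big\{[\mathcal{L}_{k-1}D^2h_k](1+u)-[\mathcal{L}_{k-1}D^2h_k](1-u)\Big\}-D\Big([\mathcal{L}_kh_k](1+u)-[\mathcal{L}_kh_k](1-u)\Big),
\end{equation*}
by claims \eqref{claim1}, \eqref{claim2} and the definition of $f_\pm$. This is exact, with no use of \eqref{rangeee}.

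Next we apply $\mathcal{L}_k$ (in the variable $u$, at the point $u=t$) to both pieces.

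The second piece, $-D(\Phi)$ with $\Phi(u):=[\mathcal{L}_kh_k](1+u)-[\mathcal{L}_kh_k](1-u)$, produces $-\mathcal{L}_kD\Phi$. After multiplying by $\frac{(-1)^k 2^{3k}k!}{(1-t)^{2k}}\cdot\frac{1}{2^{3k+1}k!}$, expanding $\mathcal{L}_kD=\sum_l c_l(1-t)^{k-l}D^{k-l+1}$ should give part of $T\Phi$.

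The first piece must then produce $\frac{d}{dt}[D^{2k}h_k](1-t)$ plus the remaining part of $T\Phi$. These are the terms carrying the factor $l$ rather than $\frac{k-l}{2}t$. This is plausible because $\mathcal{L}_k$ acting on $(u^2-1)G(u)$ creates, by the Leibniz rule for $D$ (note $D(u^2-1)=2$), terms of the form $(u^2-1)\mathcal{L}_kG$ and $2l$-weighted lower-order terms. Those lower-order terms recombine with $\mathcal{L}_{k-1}D^2$ into $\mathcal{L}_kD$-type expressions.

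To identify the residual we use the one-sided identities underlying Theorem \ref{ODE:inversion} and Remark \ref{rmk:t02}. These express $\frac{d}{dt}D^{2k}h_k$ through $\mathcal{L}_k$ and $\widetilde{\mathcal{L}}_k$. Together with the relation $[\mathcal{L}_k\psi](1\pm u)$ versus $\psi$ at $1\pm u$ (Proposition \ref{Lem:Lktointegral}), the $(1+u)$-part should become pure $T\Phi$ material, and the $(1-u)$-part should collapse to $\frac{d}{dt}[D^{2k}h_k](1-t)$.

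The main obstacle is this bookkeeping step. We must commute $\mathcal{L}_k$ past the factor $(u^2-1)$ and compose $\mathcal{L}_k\mathcal{L}_{k-1}D^2$ at the shifted arguments $1\pm u$. Since $D$ in $u$ acting on a function of $1\pm u$ is not $D$ of that function, we would write everything in terms of ordinary derivatives via Proposition \ref{fder}. We would then match coefficients of $t^s[D^{i}h_k](1\pm t)$ using binomial sums, evaluated by the contour-integral trick of Lemma \ref{fpm:exp}. As a sanity check we would first verify $k=1$ by hand, where $T=\frac{1}{(1-t)^2}\big(\tfrac{t}{2}(1-t)D^2+D\big)$, and then organize the general coefficient identity to be proved by induction on $k$ or by generating functions.
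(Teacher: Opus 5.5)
\textbf{Verdict: the set-up is right, but the step that carries the whole proof is left undone.}

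Your set-up matches the paper's. You apply the identity of Theorem~\ref{ODE:inversion} to the candidate $f$. You then use the exact, range-condition-free decomposition $2^{3k+1}k!\,t^{2k+1}f=(t^2-1)\Psi-D\Phi$ from \eqref{claim1}--\eqref{claim2}, where $\Phi(t)=[\mathcal{L}_kh_k](1+t)-[\mathcal{L}_kh_k](1-t)$ and $\Psi(t)=[\mathcal{L}_{k-1}D^2h_k](1+t)-[\mathcal{L}_{k-1}D^2h_k](1-t)$.

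After that there is only a plan, and the step you call bookkeeping is the entire proof. Writing $l+\frac{k-l}{2}t=\frac{k+l}{2}-\frac{(k-l)(1-t)}{2}$ gives
\[
T=\frac{(-1)^{k+1}}{2(1-t)^{2k}}\,\mathcal{L}_kD+\frac{(-1)^{k}}{2(1-t)^{2k-2}}\,\mathcal{L}_{k-1}D^2 .
\]
Two corrections follow from this:
\begin{itemize}
\item This is how $T$ actually splits; it is not a split along ``$l$ versus $\frac{k-l}{2}t$''.
\item For $k=1$ the coefficient of $D$ in $T$ is $\frac{1}{2(1-t)^2}$, not $\frac{1}{(1-t)^2}$, so your proposed $k=1$ sanity check would fail as written.
\end{itemize}
The $-D\Phi$ piece cancels the first summand of $T$ exactly. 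The proposition is then equivalent to
\[
\mathcal{L}_k\big[(t^2-1)\Psi\big]-(1-t)^2\,\mathcal{L}_{k-1}D^2\Phi=2(-1)^k(1-t)^{2k}\Big[\frac{d}{dt}D^{2k}h_k\Big](1-t).
\]
You never formulate this identity, and you do not prove it for general $k$; ``induction on $k$ or generating functions'' is not an argument.

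\textbf{A circular step.} To produce $[\frac{d}{dt}D^{2k}h_k](1-t)$ you invoke the one-sided identities of Theorem~\ref{ODE:inversion} and Remark~\ref{rmk:t02}. Applied to the given $h_k$, this is circular. Those identities hold only when $h_k$ is already the scaled spherical mean of some function, and that is exactly what this step ($\widetilde h_k=h_k$) is meant to establish.

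\textbf{An inaccuracy.} The $(1-t)$-part does not ``collapse'' to that single term. It must also reproduce the $(1-t)$-component of $(1-t)^2\mathcal{L}_{k-1}D^2\Phi$.

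\textbf{The missing idea.} The paper avoids the two-point bookkeeping altogether as follows.
\begin{itemize}
\item Apply Proposition~\ref{Lem:Lktointegral} to $\Psi$ (order $k-1$, acting on $D^2h_k$) and to $\Phi$. Every term then becomes $\int_{1-t}^{1+t}[\frac{d}{du}D^{2k}h_k](u)\,K(t,u)\,\D u$, with $K$ built from $Q(t,u)^{k-1}$ and $Q(t,u)^k$.
\item Because $Q$ vanishes at $u=1\pm t$, the $t$-derivatives in $\mathcal{L}_k$, $\mathcal{L}_kD$ and $T$ pass under the integral except at top order.
\item The resulting boundary terms are explicit. Their $(1-t)$-contributions are $+\frac12$ on the left and $-\frac12$ from $T$, which accounts exactly for the extra term $[\frac{d}{dt}D^{2k}h_k](1-t)$.
\item What remains is the single polynomial identity $F(t,u)\equiv0$ of Lemma~\ref{mainlemma}. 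It is proved by Fa\`a di Bruno's formula in the variables $A=1+u^2-t^2$, $B=1-t$, $C=1+t$, using $B+C=2$.
\end{itemize}

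Your pointwise coefficient-matching route is not wrong in principle. The displayed identity holds for arbitrary $h_k$, its $(1+t)$- and $(1-t)$-parts can be treated separately, and both parts check out for $k=1$. To turn it into a proof, however, you would have to state and prove the resulting multi-index binomial identities for all $k$, and the proposal does not do that.
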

\begin{proof}
Recall from  Theorem \ref{ODE:inversion}, the following relation: 
\begin{align}
       &\frac{d}{dt}[D^{2k}\widetilde{h}_k](1-t)\\
			&=\frac{(-1)^{k}k!2^{3k}}{(1-t)^{2k}}   [\mathcal{L}_{k}(t^{2k+1}f)](t)=\frac{(-1)^{k}}{2(1-t)^{2k}}  [\mathcal{L}_{k}(k!2^{3k+1}t^{2k+1}f)](t)\\
            &=\frac{(-1)^{k}}{2(1-t)^{2k}} \Big{[}\Lc_k\left(k! 2^{3k+1} t^{2k+1} f_{+}-k! 2^{3k+1} t^{2k+1} f_{-}\right)\Big{]}(t)\\
            &=\frac{(-1)^{k}}{2(1-t)^{2k}} \Big{[}\Lc_k\left(k! 2^{3k+1} (\cdot)^{2k+1} f_{+}-k! 2^{3k+1} (\cdot)^{2k+1} f_{-}+D([\Lc_k h_k](1+\cdot)-[\Lc_k h_k](1-\cdot))\right)\Big{]}(t)\\
            &-\frac{(-1)^{k}}{2(1-t)^{2k}}\mathcal{L}_{k}D\left\{[\mathcal{L}_{k}h_{k}](1+t)-[\mathcal{L}_{k}h_{k}](1-t)\right\}.
\end{align}
In the last equality, we added and subtracted $D\left([\Lc_k h_k](1+\cdot)-[\Lc_k h_k](1-\cdot)\right)(t)$. 
Using \eqref{claim1} and \eqref{claim2}, we get
\begin{multline}
    \frac{d}{dt}[D^{2k}\widetilde{h}_k](1-t)=\frac{(-1)^{k}}{2(1-t)^{2k}}   \mathcal{L}_{k}\Big{[}(t^2-1)([\mathcal{L}_{k-1}D^2h_k](1+t)-[\mathcal{L}_{k-1}D^2h_k](1-t))\Big{]}
    -\\\frac{(-1)^{k}}{2(1-t)^{2k}}\mathcal{L}_{k}D\left\{[\mathcal{L}_{k}h_{k}](1+t)-[\mathcal{L}_{k}h_{k}](1-t)\right\}.
\end{multline}
Using Proposition \ref{Lem:Lktointegral}, we get
\begin{multline}\label{Mthm:rel1}
   \frac{d}{dt}[D^{2k}\widetilde{h}_k](1-t)=-\frac{1}{2^{3k-2}(k-1)!(1-t)^{2k}}\mathcal{L}_{k}\left((t^2-1)\int_{1-t}^{1+t}\frac{d}{du}[D^{2k}h_{k}](u)(Q(t,u))^{k-1}\D u\right)\\-\frac{1}{2^{3k+1}k!(1-t)^{2k}}\mathcal{L}_kD\left(\int_{1-t}^{1+t}\frac{d}{du}[D^{2k}h_{k}](u)(Q(t,u))^k\D u\right).
\end{multline}
Using Proposition \ref{Lem:Lktointegral}, the RHS of \eqref{Tlinear} takes the form
\begin{multline}\label{Mthm:rel2}
    T\left([\mathcal{L}_{k}h_{k}](1+t)-[\mathcal{L}_{k}h_{k}](1-t)\right)+\frac{d}{dt}[D^{2k}h_k](1-t)
    =\\T\left(\frac{(-1)^k}{2^{3k}k!}\int_{1-t}^{1+t}\frac{d}{du}[D^{2k}h_{k}](u)(Q(t,u))^k\D u\right)+\frac{d}{dt}[D^{2k}h_{k}](1-t).
\end{multline}
In order to show the equality in  \eqref{Tlinear}, it suffices to prove that RHS of \eqref{Mthm:rel1} and \eqref{Mthm:rel2} are equal i.e.,
\begin{multline}\label{Mthm:rel3}
     -\frac{1}{2^{3k-2}(k-1)!(1-t)^{2k}}\mathcal{L}_{k}\left((t^2-1)\int_{1-t}^{1+t}\frac{d}{du}[D^{2k}h_{k}](u)(Q(t,u))^{k-1}\D u\right)\\-\frac{1}{2^{3k+1}k!(1-t)^{2k}}\mathcal{L}_kD\left(\int_{1-t}^{1+t}\frac{d}{du}[D^{2k}h_{k}](u)(Q(t,u))^k\D u\right)=\\\frac{(-1)^k}{2^{3k}k!}T\left(\int_{1-t}^{1+t}\frac{d}{du}[D^{2k}h_{k}](u)(Q(t,u))^k\D u\right)+\frac{d}{dt}[D^{2k}h_{k}](1-t).
\end{multline}
Let us first examine the boundary terms on both sides. The boundary terms arising from LHS are
\begin{align*}
    &-\frac{(t^2-1)(1-t)^{k}}{2^{3k-2}(k-1)!(1-t)^{2k}t}\times\\
    &\left(\frac{d}{dt}[D^{2k}h_{k}](1+t)2^{3k-3}(k-1)!(1+t)^{k-1}+\frac{d}{dt}[D^{2k}h_{k}](1-t)2^{3k-3}(k-1)!(1-t)^{k-1}\right)\\
    &-\frac{(1-t)^{k}}{2^{3k+1}k!(1-t)^{2k}t}\left(\frac{d}{dt}[D^{2k}h_{k}](1+t)k!2^{3k}(1+t)^k+\frac{d}{dt}[D^{2k}h_{k}](1-t)k!2^{3k}(1-t)^k\right)\\
    &=-\frac{(t^2-1)}{2(1-t)^kt}\left(\frac{d}{dt}[D^{2k}h_{k}](1+t)(1+t)^{k-1}+\frac{d}{dt}[D^{2k}h_{k}](1-t)(1-t)^{k-1}\right)\\
    &-\frac{1}{2(1-t)^kt}\left(\frac{d}{dt}[D^{2k}h_{k}](1+t)(1+t)^k+\frac{d}{dt}[D^{2k}h_{k}](1-t)(1-t)^k\right)\\
  &=-\frac{(1+t)^k}{2(1-t)^k}\frac{d}{dt}[D^{2k}h_{k}](1+t)+\frac{1}{2}\frac{d}{dt}[D^{2k}h_{k}](1-t).
\end{align*}
The boundary terms arising from the  right-hand side are
\begin{align*}
   & \frac{(-1)^k}{2^{3k}k!t}\frac{(-1)^{k+1}}{(1-t)^{2k}2}t(1-t)^k\left(\frac{d}{dt}[D^{2k}h_{k}](1+t)k!2^{3k}(1+t)^{k}+\frac{d}{dt}[D^{2k}h_{k}](1-t)k!2^{3k}(1-t)^k\right)\\
   &=-\frac{(1+t)^k}{2(1-t)^k}\frac{d}{dt}[D^{2k}h_{k}](1+t)-\frac{1}{2}\frac{d}{dt}[D^{2k}h_{k}](1-t).
\end{align*}
Taking into account the boundary terms arising on both sides, establishing \eqref{Mthm:rel3} reduces to proving the following:
\begin{multline*}
   - \frac{1}{2^{3k-2}(k-1)!(1-t)^{2k}}\int_{1-t}^{1+t}\frac{d}{du}[D^{2k}h_{k}](u)\mathcal{L}_k((t^2-1)(Q(t,u))^{k-1})\D u\\-\frac{1}{2^{3k+1}k!(1-t)^{2k}}\int_{1-t}^{1+t}\frac{d}{du}[D^{2k}h_{k}](u)\mathcal{L}_kD(Q(t,u))^k\D u=\frac{(-1)^{k}}{2^{3k}k!}\int_{1-t}^{1+t}\frac{d}{du}[D^{2k}h_{k}](u)T(Q(t,u))^k\D u.
\end{multline*}
That is, it is enough to show that 
\begin{equation}\label{mthm:rel4}
      \int_{1-t}^{1+t}\frac{d}{du}[D^{2k}h_{k}](u)\left[\mathcal{L}_k((t^2-1)Q(t,u))^{k-1}+\left(\frac{1}{2^3k}\mathcal{L}_kD+\frac{(-1)^{k}(1-t)^{2k}}{2^2k}T\right)(Q(t,u))^k\right]\D u=0.
\end{equation}
    By a straightforward check, we have 
    \begin{align*}
        \frac{1}{2^3k}\mathcal{L}_kD+\frac{(-1)^{k}(1-t)^{2k}}{2^2k}T=\frac{(1-t)^{2}}{2^{3}k}\mathcal{L}_{k-1}D^{2}.
    \end{align*}
Thus \eqref{mthm:rel4} takes the form
\begin{equation}\label{rightinverse1}
     \int_{1-t}^{1+t}\frac{d}{du}[D^{2k}h_{k}](u)
\underbrace{\left[\mathcal{L}_k\left\{(t^2-1)(Q(t,u))^{k-1}\right\}+\frac{(1-t)^2}{2}\mathcal{L}_{k-1}D\left\{(Q(t,u))^{k-1}(1+u^2-t^2)\right\}\right]}_{F(t,u)}\D u=0.
\end{equation}
Thus, proving \eqref{Tlinear}  reduces to establishing \eqref{rightinverse1}. The latter follows  once we show that 
$F(t,u)\equiv 0$, which is precisely the statement of Lemma \ref{mainlemma}. This completes the proof.
\end{proof}
\begin{remark}
    In order to prove $\widetilde{h}_{k}(t)=h_{k}(t)$ for $t\in[1,2]$, the relation \eqref{Tlinear} in the above proposition takes the form
    \begin{align}
    \frac{d}{dt}[D^{2k}\widetilde{h}_k](1+t)=T\left([\mathcal{L}_{k}h_{k}](1+t)-[\mathcal{L}_{k}h_{k}](1-t)\right)+\frac{d}{dt}[D^{2k}h_k](1+t),
\end{align}
where the linear operator $T$ is given by
\begin{align*}
    T:
    &=\frac{(-1)^{k+1}}{(1+t)^{2k}}\sum_{l=0}^{k} \frac{(k+l-1)!}{(k-l)!l!2^{l}}\left( -l+ \frac{(k-l)}{2}t\right)(1+t)^{k-l} D^{k-l+1}.
\end{align*}
The proof will follow analogously by using \eqref{ODE:op2} instead of \eqref{ODE:op}.
\end{remark}

\subsection{Step 2: $f$ is smooth}\label{Mthm:step3}

   From Proposition \ref{equiv:repoff} for $k\geq 1$ and with the representation for $f$ given in Theorem \ref{Mainthm1} for the case $n=3$ (corresponding to $k=0$),   the function $f$ has the form
    \begin{align}\label{fexp:smooth}
               f(t)=\frac{1}{2^{3k+1}k!t^{2k+1}} M(t),
           \end{align}
           where $M$ is  a smooth odd function. Hence $f$ is an even function. For the case $n=3$, utilizing the range condition from Theorem \ref{RC:Radial}, we have that $f$ is smooth. 
           
           From now on, we will focus on the case $k\geq 1$. To show that 
$f$ is smooth, it suffices to prove that 
$M(t)$
 vanishes to order at least $2k+1$ at $t=0$, since $M$ is smooth.

    Our first claim is to show that $ \left[\frac{\D}{\D t}D^{2k}h_{k}\right](1)=0$.
        Recalling the range condition from \cite[Theorem 2.1]{AAKN2} we have,
    \begin{align*}
        \frac{1}{t^{n-2}}\int_{1-t}^{1+t}\frac{d}{du}[D^{2k}h_{k}](u)(Q(t,u))^k \D u=0
       \implies \int_{1-t}^{1+t}\frac{d}{du}[D^{2k}h_{k}](u)(Q(t,u))^k \D u=0.
    \end{align*}
    Differentiating the above expression $k$ times w.r.t. $t$, and noting that no boundary terms arise, we apply the Faà di Bruno formula to obtain
    \begin{align*}
        \int_{1-t}^{1+t}\frac{d}{du}[D^{2k}h_{k}](u)\left(\sum_{q \geq k / 2}^k  (-1)^{k-q}k!4^{q}\binom{q}{2q-k}\binom{k}{q} [Q(t,u)]^{k-q}(u^2+1-t^2)^{2q-k}\right)\D u=0.
    \end{align*}
    If we differentiate once more, a boundary term arises in the above expression only in the case $q=k$.
    The boundary term corresponding to $q=k$ after differentiation is 
    \begin{equation*}
        \frac{d}{dt}[D^{2k}h_{k}](1+t)k!4^k(2(1+t))^k+\frac{d}{dt}[D^{2k}h_{k}](1-t)k!4^k(2(1-t))^k
    \end{equation*}
    Now if we take $t \to 0$ all the integrals vanish and using the above expression of boundary term we obtain
    \begin{equation}
        \frac{d}{dt}[D^{2k}h_{k}](1)=0.
    \end{equation}
Coming back to the expression \eqref{fexp:smooth}.
Since $M$ is a smooth odd function, $M^{(2j)}(0)=0$ for all $j\in \mathbb{N}$. Moreover, there exists $r\in \mathbb{N}$ and smooth function $\phi$ such that
\begin{equation}\label{M1:exp}
    M(t)=t^{r}\phi(t), \quad \text{ where } \phi(0)\neq 0.
\end{equation}
Note that since $M$ is an odd and $\phi(0)\neq 0$, $r$ cannot be even. 
\begin{align*}
           \left[\frac{\D}{\D t}D^{2k}h_{k}\right](1-t)
			=\frac{(-1)^kk!2^{3k}}{(1-t)^{2k}}[\mathcal{L}_k(t^{2k+1}f)](t)
            &=\frac{(-1)^k}{2(1-t)^{2k}}[\mathcal{L}_k(M)](t)\\
            &=\frac{(-1)^k}{2(1-t)^{2k}}\left[\sum_{l=0}^{k}\frac{(k+l)!}{(k-l)!l!2^l}(1-t)^{k-l}(D^{k-l}M)(t)\right]
\end{align*}
This implies
\begin{equation}
     \frac{(-1)^{k}(1-t)^k}{2}\left[\frac{\D}{\D t}D^{2k}h_{k}\right](1-t)=\sum_{l=0}^{k}\frac{(k+l)!}{(k-l)!l!2^l}(1-t)^{-l}(D^{k-l}M)(t)=\sum_{l=0}^{k} c_{k,l}(t)[D^{k-l}M](t),
\end{equation}
where 
$$c_{k,l}(t)=\frac{(k+l)!}{(k-l)!l!2^l}\frac{1}{(1-t)^l}.$$
Now using the fact that $\frac{d}{dt}[D^{2k}h_{k}](1)=0$, Leibniz rule and Proposition \ref{fder}, we get
\begin{align*}
    0&=\lim_{t\to 0}\left(\sum_{l=0}^{k} c_{k,l}(t)[D^{k-l}M](t)\right)=\lim_{t\to 0}\left(\sum_{l=0}^{k} c_{k,l}(t)D^{k-l}(t^{r}\phi(t))\right)\\
    &=\lim_{t\to 0}\left(\sum_{l=0}^{k} c_{k,l}(t)\sum_{j=0}^{k-l}\binom{k-l}{j}\frac{r!!}{(r-2k+2l+2j)!!}t^{r-2k+2l+2j}D^{j}\phi(t)\right)\\&=\lim_{t\to 0}\left(\sum_{l=0}^{k} c_{k,l}(t)\sum_{j=0}^{k-l}\binom{k-l}{j}\frac{r!!}{(r-2k+2l+2j)!!}t^{r-2k+2l+2j}\sum_{m=0}^{j}B(m,j)t^{m-2j}\phi^{(m)}(t)\right)
    \\
    &=\lim_{t\to 0}t^{r-2k}\left(\sum_{l=0}^{k} c_{k,l}(t)\sum_{j=0}^{k-l}\binom{k-l}{j}\frac{r!!}{(r-2k+2l+2j)!!}\sum_{m=0}^{j}B(m,j)t^{m+2l}\phi^{(m)}(t)\right).
\end{align*}
Clearly, the only surviving term is the one with $m=l=0$
; hence, we obtain
\begin{equation}
   \lim_{t\to 0}t^{r-2k}\left(\sum_{j=0}^{k}\binom{k}{j}\frac{r!!}{(r-2k+2j)!!}B(0,j)\phi(0)\right)=0.
\end{equation}
Since $B(0,0)=1$ and $B(0,j)=0$ for all $j\geq 1$. Hence
\begin{equation}
   \lim_{t\to 0}t^{r-2k}\left(\frac{r!!}{(r-2k)!!}\phi(0)\right)=0.
\end{equation}
Since $\phi(0)\neq 0$, therefore $r$ must be  odd and $r>2k$. Hence $M(t)=t^{2k+1}\phi(t)$. Hence $f$ is smooth.
This completes the proof of Theorem \ref{Mainthm1}.
\begin{remark}
    Having established the smoothness of $f$, we can use the ODE in \eqref{ODE:inversion} to iteratively determine $f$ and all its derivatives at $0$. For instance, $f(0)$ is explicitly given by
\begin{equation*}
    f(0) = \frac{(-1)^{k+1}}{2^{2k}(2k+1)!}\left[\frac{\mathrm{d}^2}{\mathrm{d}t^2}D^{2k}h_k\right](1).
\end{equation*}
A similar argument can be applied for general functions as well. 
\end{remark}

\section{General case}\label{sec:genfunction}

The spherical harmonics expansion of $g\in C_c^{\infty}(\Sb^{n-1}\times (0,2))$ is given by    
\begin{equation*}
     g(\theta,t)=\sum_{q=0}^{\infty}\sum_{s=1}^{d_{q}} g_{q,s}(t)Y_{q,s}(\theta),
\end{equation*}
where $g_{q,s}\in C_{c}^{\infty}((0,2))$. Let 
    $h_{q,s}(t):=\dfrac{\omega_{2k+3}t^{2k+1}}{\omega_{2k+2}}g_{q, s}(t)$. Let us assume that $h_{q,s}$ satisfies the following two conditions for each $q,s$. There is a function $\phi_{q,s}(t)\in C_c^{\infty}((0,2))$ such that
\begin{equation}\label{ss}
    h_{q,s}(t)= [D^{q}\phi_{q,s}](t)\mbox{ and } \left[\mathcal{L}_{q+k} \phi_{q,s}\right](1-t)=\left[\mathcal{L}_{q+k}\phi_{q,s}\right](1+t).
\end{equation} 
We will construct a candidate ${f}_{q, s}(t)\in C_c^{\infty}([0,1))$ such that $f(x)\in C_c^{\infty}(\Bb)$ defined by $f(x):= \sum\limits_{q=0}^{\infty}\sum\limits_{s=1}^{d_{q}} f_{q,s}(t)Y_{q,s}(\theta),  x=t\theta, $ such that  
\begin{equation}\label{phi(q,s)(t)}
    \phi_{q,s}(t)=\frac{k!}{ 2^{q}4^{q+k}(q+k)!}\int_{\norm{1-t}}^1 u^{1-q}{f}_{q, s}(u)(Q(t,u))^{k+q}\D u.
\end{equation}
We define 
\begin{equation}\label{wtfqs(t)}
\begin{aligned}
    \widetilde{{f}}_{q,s}(t):&=\frac{(-1)^{k+q-1}(t^2-1)}{t^{2k+2q+1}2^{4k+3q-2}k!(k+q-1)!}\int_{1-t}^{1+t}\frac{d}{du}[D^{2k+2q}\phi_{q,s}](u)(Q(t,u))^{k+q-1}\D u\\
    &=\frac{(-1)^{k+q-1}(t^2-1)}{t^{2k+2q+1}2^{4k+3q-2}k!(k+q-1)!}\int_{1-t}^{1+t}\frac{d}{du}[D^{2k+q}h_{q,s}](u)(Q(t,u))^{k+q-1}\D u.
    \end{aligned} 
\end{equation}
The second equality above uses the relation $h_{q,s}(t)=[D^{q}\phi_{q,s}](t)$ from \eqref{ss}.
With this define 
\begin{equation}
    \widetilde{\phi}_{q,s}(t):=\int_{\norm{1-t}}^1 u \left(\frac{k!}{ 2^{q}4^{q+k}(q+k)!}\widetilde{f}_{q, s}(u)\right)(Q(t,u))^{k+q}\D u. 
\end{equation}
Using arguments similar to those given for the radial case, $\widetilde{\phi}_{q,s}(t)$ is a smooth function on $[0,2]$, $\widetilde{\phi}_{q,s}(t)=\phi_{q,s}(t)$ and $\widetilde{f}_{q, s}$ is smooth across the origin.

Finally, let us define 
\[
f_{q,s}(t):=t^{q} \wt{f}_{q,s}(t).
\]
Since $\wt{\phi}_{q,s}(t)=\phi_{q,s}(t)$, we see that \eqref{phi(q,s)(t)} is satisfied. 

\begin{proposition}
    The function $f$ defined by 
    \begin{equation}\label{fseries}
    f(x):= \sum\limits_{q=0}^{\infty}\sum\limits_{s=1}^{d_{q}} f_{q,s}(t)Y_{q,s}(\theta), \quad x=t\theta
\end{equation}
is in $C_c^{\infty}(\Bb)$. 
\end{proposition}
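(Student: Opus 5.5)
We need to show that the series converges in $C^\infty$ and that its sum has compact support in $\Bb$. There are three ingredients: each term $f_{q,s}(t)Y_{q,s}(\theta)$ is a smooth function on $\Rb^n$; each term is supported in a fixed compact ball $\{|x|\le 1-\epsilon\}$; and all derivatives of the terms decay faster than any power of $q$, uniformly on compact sets. Together with the estimates \eqref{fact1}--\eqref{fact2} on $d_q$ and $Y_{q,s}$, this gives uniform convergence of every derivative, so $f\in C_c^\infty(\Bb)$.

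\textbf{Smoothness of individual terms.} From the construction, $\wt f_{q,s}$ is smooth across the origin and even. Indeed, as in Step 2 of the radial case, $\wt f_{q,s}(t)=t^{-(2k+2q+1)}M_{q,s}(t)$ with $M_{q,s}$ odd and vanishing to order $2k+2q+1$ at $0$. Hence $\wt f_{q,s}(t)=F_{q,s}(t^2)$ for some smooth $F_{q,s}$. Then
\[
f_{q,s}(t)Y_{q,s}(\theta)=F_{q,s}(|x|^2)\,|x|^qY_{q,s}(x/|x|),
\]
and $|x|^qY_{q,s}(x/|x|)$ is a homogeneous harmonic polynomial. So each term is smooth on $\Rb^n$.

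\textbf{Support.} The support of $g$ lies in $\Sb^{n-1}\times[\epsilon,2-\epsilon]$ for some $\epsilon>0$, so every $h_{q,s}$ vanishes outside $[\epsilon,2-\epsilon]$. In the formula \eqref{wtfqs(t)} for $\wt f_{q,s}$, the integral runs over $[1-t,1+t]$. If $t>1-\epsilon$, this interval contains all of $[\epsilon,2-\epsilon]$, and the integrand is a total derivative times a polynomial. I would use the equivalent form of \eqref{candidate2:new}: $\wt f_{q,s}$ is a combination of $\Lc$-type expressions of $\phi_{q,s}$ evaluated at $1\pm t$, and these vanish once $1\pm t\notin[\epsilon,2-\epsilon]$. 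So $\wt f_{q,s}(t)=0$ for $t\ge 1-\epsilon$, uniformly in $q$.

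\textbf{Decay in $q$ (main obstacle).} The constants $1/\big(2^{4k+3q}k!(k+q-1)!\big)$ in \eqref{wtfqs(t)} help, but two things threaten it: the factor $D^{2k+q}h_{q,s}$ contains up to $2k+q$ derivatives, and there is division by $t^{2k+2q+1}$ near the origin. Away from the origin, say $t\in[\delta,1)$, I would bound $|D^{2k+q}h_{q,s}|$ using Proposition \ref{fder}. Each derivative of $g_{q,s}$ is $O(q^{-N})$ by \eqref{fact3}, but the number of derivatives grows with $q$, so \eqref{fact3} cannot be used directly.

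Instead I would use the third form of \eqref{General f expression}. That form involves only $D^{k+1}h_{q,s}$ and $D^{k}h_{q,s}$, a fixed number of derivatives. The $q$-dependence is then moved onto $D^{k+q}\big[(Q(t,u))^{k+q-1}\big]$, a polynomial in $u,t$. On $u\in[\epsilon,2]$ its size is at most $C^q(k+q)!$. This is cancelled by $(k+q-1)!$ up to a polynomial factor in $q$ times a geometric factor $C^q/2^{3q}$. I expect the geometric factor to be the delicate part. If it does not come out bounded by $1$, I would combine it with the super-polynomial decay of $g_{q,s}$; this requires choosing $N$ in \eqref{fact3} depending on $q$, which is not allowed. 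So the fallback is a sharper estimate of $D^{k+q}Q^{k+q-1}$ on $[1-t,1+t]$, using $0\le Q\le (2t)^2\cdot 4$ and a Cauchy-type estimate on a small complex neighbourhood.

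\textbf{Near the origin.} Here I would avoid the division by $t^{2k+2q+1}$ by working with the smooth function itself. The ODE of Theorem \ref{ODE:inversion}, generalized for each $q$, gives $F_{q,s}$ and its derivatives. Alternatively, by the maximum principle for harmonic-type extensions, the $C^m$ norm of $F_{q,s}(|x|^2)|x|^qY_{q,s}$ on $|x|\le\delta$ is controlled by its $C^m$ norm on $|x|=\delta$, up to polynomial factors in $q$.

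With rapid decay of all derivatives established, summing against $d_q^{3/2}=O(q^{3(n-2)/2})$ gives convergence in $C^\infty$ and completes the proof.
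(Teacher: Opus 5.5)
\textbf{Gap: the decay estimate in $q$.} This is the real content of the proposition, and your plan for it does not work as stated. The $g_{q,s}$ decay only faster than every power of $q$, so every factor exponential in $q$ must cancel \emph{exactly}. Two such factors sit in \eqref{General f expression}:
\begin{itemize}
\item $t^{-(2k+q+1)}$, which is of size $t^{-q}$ at \emph{every} $t\in(0,1)$, not just near the origin;
\item $2^{-(4k+3q)}/(k+q-1)!$.
\end{itemize}
Your bound ``$|D^{k+q}[Q^{k+q-1}]|\le C^q(k+q)!$ on $u\in[\epsilon,2]$'' discards the $t$-dependence of $Q$. It therefore leaves a factor like $(C/(8t))^q$ on the whole interval. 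A Cauchy estimate on a neighbourhood of small radius $r$ loses a factor of order $r^{-q}$.

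\textbf{The third form is the wrong one to estimate.} Substitute $u^2=1+t^2+2ty$. Then $Q=4t^2(1-y^2)$, $D=t^{-1}\partial_y$ and $u\,\D u=t\,\D y$, so $D^j[Q^m]=2^{2m}t^{2m-j}\partial_y^j(1-y^2)^m$. Moving $j$ of the $D$-derivatives from $h_{q,s}$ onto $Q^{k+q-1}$ in the first form leaves a net factor $t^{q-2-j}$. The third form corresponds to $j=k+q$, so each of its pieces is of size $t^{-(k+2)}$ and cannot be bounded near $0$. Your near-origin fallbacks do not repair this:
\begin{itemize}
\item $F_{q,s}(|x|^2)P_{q,s}(x)$ is not harmonic, so no maximum principle applies;
\item the ODE of Theorem \ref{ODE:inversion} determines $F_{q,s}$ recursively but gives no bound uniform in $q$.
\end{itemize}

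\textbf{What the paper does.} It integrates by parts exactly $j=q-2$ times in the first form. The powers of $t$ then cancel identically, and only $D^{2k+3}h_{q,s}$ remains, a $q$-independent number of derivatives. Rodrigues' formula writes $\partial_y^{q-2}(1-y^2)^{k+q-1}$ as a multiple of $C^{k+3/2}_{q-2}(y)(1-y^2)^{k+1}$. The sharp bound $|C^\alpha_m|\le C^\alpha_m(1)$ gives $2^{q-2}(k+q-1)!/(k+1)!$, which cancels the factorial and the powers of $2$ exactly. This yields $|f_{q,s}(t)|\lesssim\int_{-1}^1|D^{2k+3}h_{q,s}(\sqrt{1+t^2+2ty})|\,\D y=O(q^{-N})$ uniformly for $0<t<1-\delta$, including near the origin.

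\textbf{Derivatives.} For $\partial_x^\alpha$, each derivative in polar coordinates costs a factor $t^{-1}$. The paper absorbs this by taking $j=q-l$ with $|\alpha|+2<l<q$, which leaves a net factor $t^{l-2}$ and Gegenbauer index $k+l-\frac12$. It combines this with Seeley's bounds for derivatives of $Y_{q,s}$. Your proposal asserts rapid decay of all derivatives without any such mechanism.

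Your arguments for smoothness of the individual terms and for the support are fine and agree with the paper.
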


\bpr 

First we prove that the right hand side of \eqref{fseries}
 converges uniformly for $x$ such that  $0<|x|<1-\delta$ for $\delta>0$ small enough. It is enough to show uniform convergence in this region since for $|x|>1-\delta$, due to the support condition on $h$, $\mbox{supp}(f_{q,s}(t))\subset [0,1)$ (this is straightforward to see using \eqref{Eq f expression}). We have 
 \[
 f_{q,s}(t)=\frac{(-1)^{k+q-1}(t^2-1)}{t^{2k+q+1}2^{4k+3q-2}k!(k+q-1)!}\int_{1-t}^{1+t}\frac{d}{du}[D^{2k+q}h_{q,s}](u)(Q(t,u))^{k+q-1}\D u.
 \]
We perform a change of variable $u^2=1+t^2+2ty$.  Then 
\begin{align*}
    Q(t,u)=((1+t)^2-u^2)(u^2-(1-t)^2)=4t^2(1-y^{2}).
\end{align*}
The derivative with respect to $u$ changes as 
\[
    \frac{d}{du}=\frac{d}{dy}\frac{dy}{du}=\frac{d}{dy}\frac{u}{t}.
    \]
    Hence 
    \Beq \label{Dexp:f1}
    D=\frac{1}{u}\frac{d}{du}=\frac{1}{t}\frac{d}{dy}.
\Eeq 
Since $1-t\leq u \leq 1+t$ and $t\in (0,1)$, we have $y\in[-1,1]$. 
Now 
\[
\begin{aligned}
     D^{q-2}((Q(t,u)^{k+q-1})&=\frac{1}{t^{q-2}}\frac{d^{q-2}}{dy^{q-2}}\left\{2^{2k+2q-2}t^{2k+2q-2}(1-y^2)^{q+k-1}\right\}\\
     &=2^{2k+2q-2}t^{2k+q}\frac{d^{q-2}}{dy^{q-2}}\left\{(1-y^2)^{q+k-1}\right\}.
\end{aligned}
\]
We then have 
\[
\begin{aligned}
f_{q,s}(t)&=\frac{(-1)^{k-1}(t^2-1)2^{2k+2q-2}t^{2k+q+1}}{t^{2k+q+1}2^{4k+3q-2}k!(k+q-1)!}\int_{-1}^{1}[D^{2k+3}h_{q,s}](\sqrt{1+t^2+2ty})\frac{d^{q-2}}{dy^{q-2}}\left\{(1-y^2)^{q+k-1}\right\}\D y\\
&=\frac{(-1)^{k-1}(t^2-1)}{2^{2k+q}k!(k+q-1)!}\int_{-1}^{1}[D^{2k+3}h_{q,s}](\sqrt{1+t^2+2ty})\frac{d^{q-2}}{dy^{q-2}}\left\{(1-y^2)^{q+k-1}\right\}\D y.
\end{aligned} 
\]
Recall the following formula for Gegenbauer polynomials:
\begin{align*}
    \frac{d^m}{dy^m}(1-y^2)^{m+\alpha-\frac{1}{2}}=\frac{1}{K}C_{m}^{\alpha}(y)(1-y^2)^{\alpha-\frac{1}{2}}\quad \text{where }K=\frac{(-1)^{m}\Gamma\left(\alpha+\frac{1}{2}\right)\Gamma(m+2\alpha)}{2^{m}m!\Gamma(2\alpha)\Gamma\left(m+\alpha+\frac{1}{2}\right)}.
\end{align*}
For $m=q-2, \alpha=k+\frac{3}{2}>0$, we get
\begin{align*}
    \frac{d^{q-2}}{dy^{q-2}}(1-y^2)^{q+k-1}=\frac{2^{q-2}(q-2)!\Gamma(2k+3)\Gamma\left(q+k\right)}{(-1)^{q-2}\Gamma\left(k+2\right)\Gamma(q+2k+1)}C_{q-2}^{k+\frac{3}{2}}(y)(1-y^2)^{k+1}.
\end{align*}
For Gegenbauer polynomials, it is well-known that $\sup\limits_{y\in [-1,1]}|C_{q-2}^{k+\frac{3}{2}}(y)|\leq \frac{\Gamma(q+2k+1)}{\Gamma(2k+3)(q-2)!}$; see \cite[Theorem 7.33.1]{Gabor}.  Using the above bound, we obtain 
\[
\begin{aligned} 
    |\PD_y^{q-2}(1-y^2)^{k+q-1})|&\leq 2^{2k+2q-2}t^{2k+q}\frac{2^{q-2}(q-2)!\Gamma(2k+3)\Gamma\left(q+k\right)}{\Gamma\left(k+2\right)\Gamma(q+2k+1)}\frac{\Gamma(q+2k+1)}{\Gamma(2k+3)(q-2)!}\\&=2^{q-2}\frac{(k+q-1)!}{(k+1)!}.
\end{aligned}
\]
Combining all these, we have 
\begin{align}\label{fqs-final estimate}
    |f_{q,s}(t)|
    &\leq \frac{1}{2^{2k+2}k!(k+1)!}\int_{-1}^{1}[D^{2k+3}h_{q,s}](\sqrt{1+t^2+2ty})\, \D y 
    \leq \frac{C_{N,\delta}}{q^N}.
\end{align}
The last inequality follows from Property \ref{fact3} on the spherical harmonics coefficients from Section \ref{sec:prepresults}. We will specify the choice of $N$  later.
Denote by $S_{m}(x)$ the $m^{\mathrm{th}}$ partial sum in the expansion of $f$. We have 
\begin{align*}
   S_m(x)= \sum_{q=0}^{m}\left(\sum_{s=1}^{d_{q}} f_{q,s}(t)Y_{q,s}(\theta)\right)= \sum_{q=0}^{m} a(q,t,\theta)=a(0,t,\theta)+a(1,t,\theta)+\sum_{q=2}^{m}a(q,t,\theta).
\end{align*}
Note that $a(0,t,\theta), a(1,t,\theta)$ are smooth functions for $t\in[0,1]$. Now consider $a(q,t,\theta)$ for $q\geq 2$: 
\begin{equation*}
      |a(q,t,\theta)|\leq d_{q}|f_{q,s}(t)|\max\left\{|Y_{q,s}(\theta)|,\theta\in \mathbb{S}^{n-1},1\leq s\leq d_{q}\right\}
    \lesssim d_{q}^\frac{3}{2}|f_{q,s}(t)|\lesssim \left(q^{2k+1}\right)^{\frac{3}{2}}\frac{C_{N,\delta}}{q^{N}},
\end{equation*}
where the second and third inequalities follow from \eqref{fact1} and \eqref{fact2} combined with the estimate for $f_{q,s}(t)$ obtained in \eqref{fqs-final estimate}. 
Now choosing $N=3k+3$, we obtain $|a(q,t,\theta)|\lesssim q^{-\frac{3}{2}}$. Hence the series  \eqref{fseries} converges uniformly for $|x|\in(0,1)$. 
Recalling that we have set $f(0)=f_{0,1}(0) Y_{0,1}(\theta)=\lim\limits_{x\to 0} S_m(x).$ 
We obtain that the function $f$ defined by \eqref{fseries} is a continuous and compactly supported function on $\Bb$. 

Next we prove that for any multi-index $\A$, $\frac{\PD^{\A}}{\PD x^{\A}} S_{m}(x)$ converges uniformly on $0<\norm{x}<1-\delta$. 
Let $x=t\theta$. In some open subset of $\Sb^{n-1}$ if $\theta_1,\cdots,\theta_{n-1}$ serve as coordinates, then we have 
\begin{align}\label{change of variable1}
    \frac{\PD}{\PD x_i}&=\theta_i\frac{\PD}{\PD t}+\frac{1}{t}\sum_{j=1}^{n-1}(\delta_{ij}-\theta_{i}\theta_j)\frac{\PD}{\PD \theta_j},~~\text{for}~1\leq i\leq n-1,\\
   \frac{\PD}{\PD x_n}&=(1-\theta_1^2-\cdots -\theta_{n-1}^2)^{\frac{1}{2}}\left(\frac{\PD}{\PD t}-\frac{1}{t}\sum_{j=1}^{n-1}\theta_j\frac{\PD}{\PD\theta_j}\right).\label{change of variable2}
\end{align}
Note that the sequence of partial sums  $S_{m}(x)$ 
 is a smooth function. Indeed, since   $f_{q,s}(t)=t^q\wt{f}_{q, s}(t)$ and $\wt{f}_{q,s}(t)$ is a smooth function and $|x|^qY_{q,s}\left(\frac{x}{|x|}\right)=P_{q,s}(x)$ where $P_{q,s}(x)$ is a homogeneous polynomial in $\Rb^n$, it follows that
 \begin{equation}
S_m(x)=\sum_{q=0}^{m}\sum_{s=1}^{d_{q}} f_{q,s}(|x|)Y_{q,s}\left(\frac{x}{|x|}\right)=\sum_{q=0}^{m}\sum_{s=1}^{d_{q}} \widetilde{f}_{q,s}(|x|)|x|^qY_{q,s}\left(\frac{x}{|x|}\right)
      =\sum_{q=0}^{m}\sum_{s=1}^{d_{q}} \widetilde{f}_{q,s}(|x|)P_{q,s}(x).
 \end{equation}
Note that $\widetilde{f}_{q, s}(t)$ is an even function of $t$, and therefore $f_{q,s}$ is a function of $t^2=|x|^2$; see \cite{whitney43}. Hence all order derivatives of $S_{m}(x)$ exist at $x=0$, showing that $S_m(x)$ is a smooth function on $\Bb$.   

As before, it suffices to establish in this range due to the support condition on $h$. We have 
\begin{align}
    \PD^{\A}(S_m(x))= \sum_{\beta\leq \A} {\A \choose \beta}\sum_{q=0}^{n}\sum_{s=1}^{d_{q}}  \PD^{\beta}(f_{q,s}(|x|)) \PD^{\A-\beta}\left(Y_{q,s}\left(\frac{x}{|x|}\right)\right)\label{nonsingular2}.
\end{align}
We derive an estimate for $\PD^{\beta}(f_{q,s}(|x|))$ and for the derivative acting on the spherical harmonics we have the estimate \cite[Theorem 4(b)]{Seeley}. 
We consider $f_{q,s}(t)$. Having chosen $\A$, we consider a $q$ large enough (depending on $|\A|$). Based on \eqref{change of variable1} and \eqref{change of variable2}, we see that each $x_i$ derivative introduces a $t^{-1}$ term. Hence at most we obtain a $t^{-|\A|}$ term. This can be handled by suitable integration by parts. Similar to what we did above, we have 
\begin{align}
    f_{q,s}(t)
    =&\frac{(-1)^{k-1}(t^2-1)}{t^{2k+q+1}2^{4k+3q-2}k!(k+q-1)!}\int_{1-t}^{1+t}u[D^{2k+l+1}h_{q,s}](u)D_u^{q-l}(Q(t,u))^{k+q-1}\D u\\
    =&\frac{(-1)^{k-1}(t^2-1)2^{2k+2q-2}t^{2k+2q-2}}{t^{2k+q}2^{4k+3q-1}k!(k+q-1)!}\int_{-1}^{1}[D^{2k+l+1}h_{q,s}](\sqrt{1+t^2+2ty})\frac{1}{t^{q-l}}\frac{d^{q-l}}{dy^{q-l}}\left\{(1-y^2)^{q+k-1}\right\}\D y\\
    =&\frac{(-1)^{k-1}(t^2-1)t^{l-2}}{2^{2k+q+1}k!(k+q-1)!}\int_{-1}^{1}[D^{2k+l+1}h_{q,s}](\sqrt{1+t^2+2ty})\left[\frac{d^{q-l}}{dy^{q-l}}\left\{(1-y^2)^{q+k-1}\right\}\right]\D y.\label{nonsingular1}
\end{align}
The choice of $q$ can be made large enough and the choice of $l$ in \eqref{nonsingular1} above is such that $l<q$ and $l>|\A|+2$. 
Each $x_i$ derivative introduces a derivative with respect to $t$. If we differentiate \eqref{nonsingular1} w.r.t. $t$ repeatedly,  then we obtain certain powers of $1+t^2+2ty$ in the denominator, which can be bounded by
\begin{align*}
   \left|{\frac{1}{(1+t^2+2ty)^j}}\right|\leq \frac{1}{(1-t)^{\frac{j}{2}}}\leq \frac{1}{\delta^{\frac{j}{2}}}.
\end{align*}
Then using the similar estimate as in the continuous case (choose $m=q-l$ and $\alpha=k+(l-\frac{1}{2})>0$) we can bound \eqref{nonsingular2} by a  finite sum of  convergent series. Using Weierstrass $M$-test, we have that the derivatives of $S_m(x)$ converges uniformly for $0<|x|<1-\delta$. As $x\to 0$, $\PD^{\A} S_m(x)$ exists (say) $A_m$. Then $A_m$ converges as $m\to \infty$ and setting this as the value at $x=0$, we have that the limiting function $f(x):=\lim\limits_{m\to \infty} S_m(x)$ is smooth. That is, $f(x)\in C_c^{\infty}(\Bb)$.
\epr 
\begin{remark}
    In the case $k=0$ with $q \ge 1$, where $f_{q,s}$ is defined as in \eqref{General f expression k=0 case}, a convergence analysis similar to the one described above holds. In the case $q=0$, note that \eqref{ss} yield $h_{0,1}(t) = \phi_{0,1}(t)$, which satisfies $h_{0,1}(1-t) = h_{0,1}(1+t)$. If we define $f_{0,1}$ as in \eqref{General f expression k=0, q=0 case}, i.e.,
\[
f_{0,1}(t) = \frac{h_{0,1}^{\prime}(1-t) - h_{0,1}^{\prime}(1+t)}{2t},
\] then it satisfies condition \eqref{phi(q,s)(t)}, since for $0 \le t \le 1$ we have
\begin{align}
    \int_{1-t}^1 u f_{0,1}(u) \, \mathrm{d}u 
    &= \frac{1}{2} \int_{1-t}^1 \left[ -\frac{\mathrm{d}}{\mathrm{d}u}\left(h_{0,1}'(1-u)\right) - \frac{\mathrm{d}}{\mathrm{d}u}\left(h_{0,1}'(1+u)\right) \right] \mathrm{d}u \\
    &= \frac{1}{2} \left[ h_{0,1}'(t) + h_{0,1}'(2-t) \right] = h_{0,1}(t),
\end{align}
where the last equality follows from the range condition. A similar result holds for the case $1 \le t \le 2$.
Moreover, since $f_{0,1}$ is a smooth function, it does not affect the convergence analysis.
\end{remark}

\section{Appendix}\label{Appendix}
\begin{lemma}\label{djDrh:expression}
For $j,r\in\mathbb{N}_{0}$, we have
\begin{align}\
    \frac{d^j}{dt^j}\{[D^rh_{k}](1+t)\}=\sum_{i=0}^{\lfloor{\frac{j}{2}}\rfloor}\frac{j!}{i!(j-2i)!2^i}(1+t)^{j-2i}[D^{r+j-i}h_{k}](1+t).
\end{align}    
\end{lemma}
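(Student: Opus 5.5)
Since both sides of the claimed identity are linear in the function being differentiated, I will argue by induction on $j$ with $r$ held fixed. The base case $j=0$ is immediate: the sum reduces to the single term $i=0$, which is $[D^r h_k](1+t)$. The one tool needed is the chain rule in the form
\begin{equation*}
\frac{d}{dt}\{[D^m h_k](1+t)\}=(1+t)\,[D^{m+1}h_k](1+t).
\end{equation*}
This holds because $\frac{d}{du}\psi(u)=u\,[D\psi](u)$ with $D=\frac1u\frac{d}{du}$, evaluated at $u=1+t$. I will also use $\frac{d}{dt}(1+t)^{p}=p(1+t)^{p-1}$.

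For the inductive step, assume the formula holds for $j$ and differentiate each term $\frac{j!}{i!(j-2i)!2^i}(1+t)^{j-2i}[D^{r+j-i}h_k](1+t)$. This produces two contributions. The first, $\frac{j!}{i!(j-2i)!2^i}(1+t)^{j+1-2i}[D^{r+j+1-i}h_k](1+t)$, already has the index shape required at level $j+1$. The second, $\frac{j!(j-2i)}{i!(j-2i)!2^i}(1+t)^{j-2i-1}[D^{r+j-i}h_k](1+t)$, is brought into the same shape by the shift $i\to i-1$, after which it reads $\frac{j!}{(i-1)!(j+2-2i-1)!\,2^{i-1}}(1+t)^{j+1-2i}[D^{r+j+1-i}h_k](1+t)$. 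Note that the factor $(j-2i)$ kills the term with $j=2i$, so no negative factorials appear after the shift.

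Collecting the coefficient of $(1+t)^{j+1-2i}[D^{r+j+1-i}h_k](1+t)$ then comes down to the Pascal-type identity
\begin{equation*}
\frac{j!}{i!(j-2i)!2^i}+\frac{j!\,2i}{i!(j+1-2i)!2^{i}}=\frac{j!\,\big((j+1-2i)+2i\big)}{i!(j+1-2i)!2^i}=\frac{(j+1)!}{i!(j+1-2i)!2^i}.
\end{equation*}
This is exactly the coefficient the formula predicts at level $j+1$, so the induction closes.

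The only point needing care is the range of the summation index $0\le i\le\lfloor (j+1)/2\rfloor$ at the ends. When $j$ is odd, the new top index $i=(j+1)/2$ receives a contribution only from the shifted second sum, and in that case the first fraction should be read as zero by convention. At $i=0$ only the first sum contributes. With these conventions the combinatorial identity above holds uniformly in $i$, and this completes the plan; I expect no genuine obstacle beyond this bookkeeping.
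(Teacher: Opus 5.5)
Your proposal is correct and follows essentially the same route as the paper: induction on $j$, differentiating each term via $\frac{d}{dt}\{[D^m h_k](1+t)\}=(1+t)[D^{m+1}h_k](1+t)$, shifting $i\to i-1$ in the terms coming from the power of $(1+t)$, and combining coefficients through the same Pascal-type identity (the paper writes it as $\frac{1}{j-2i+1}+\frac{1}{2i}$). Your parity-based treatment of the top index $i=\lfloor (j+1)/2\rfloor$ is cleaner than the paper's. The paper's closing boundary identity is literally valid only for odd $j$; for even $j$ the extra boundary term vanishes and the top coefficient is already in the middle sum.
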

\begin{proof}
    Trivially true for $j=0$ and $j=1$. Assume the formula holds for $j$: 
    \begin{align*}
    \frac{d^j}{dt^j}\{[D^rh_{k}](1+t)\}&=\sum_{i=0}^{\lfloor{\frac{j}{2}}\rfloor}\frac{j!}{i!(j-2i)!2^i}(1+t)^{j-2i}[D^{r+j-i}h_{k}](1+t).
\end{align*}
We will show that it holds for $j+1$. 
Taking a derivative we get
 \begin{align*}
    &\frac{d^{j+1}}{dt^{j+1}}\{[D^rh_{k}](1+t)\}\\&=\sum_{i=0}^{\lfloor{\frac{j}{2}}\rfloor}\frac{j!(j-2i)}{i!(j-2i)!2^i}(1+t)^{j-2i-1}[D^{r+j-i}h_{k}](1+t)+\sum_{i=0}^{\lfloor{\frac{j}{2}}\rfloor}\frac{j!(j-2i)}{i!(j-2i)!2^i}(1+t)^{j+1-2i}[D^{r+j+1-i}h_{k}](1+t).
\end{align*}
Replacing $i$ by $i-1$ in the first summation
\begin{align*}
    &\frac{d^{j+1}}{dt^{j+1}}[D^rh_{k}](1+t)=\sum_{i=1}^{\lfloor{\frac{j}{2}}\rfloor+1}\frac{j!(j-2i+2)}{(i-1)!(j-2i+2)!2^{i-1}}(1+t)^{j+1-2i}[D^{r+j+1-i}h_{k}](1+t)\\&+\sum_{i=0}^{\lfloor{\frac{j}{2}}\rfloor}\frac{j!}{i!(j-2i)!2^i}(1+t)^{j+1-2i}[D^{r+j+1-i}h_{k}](1+t)\\
    &=(1+t)^{j+1}[D^{r+j+1}h_{k}](1+t)\\
    &+\sum_{i=1}^{\lfloor{\frac{j}{2}}\rfloor}\frac{j!}{(i-1)!(j-2i)!2^{i-1}}\left[\frac{1}{(j-2i+1)}+\frac{1}{2i}\right](1+t)^{j+1-2i}[D^{r+j+1-i}h_{k}](1+t)\\
    &+\left.\frac{j!(j-2i+2)}{(i-1)!(j-2i+2)!2^{i-1}}(1+t)^{j+1-2i}[D^{r+j+1-i}h_{k}](1+t)\right|_{i=\lfloor{\frac{j}{2}}\rfloor+1}\\
    &=(1+t)^{j+1}[D^{r+j+1}h_{k}](1+t)+\sum_{i=1}^{\lfloor{\frac{j}{2}}\rfloor}\frac{(j+1)!}{i!(j-2i+1)!2^{i}}(1+t)^{j+1-2i}[D^{r+j+1-i}h_{k}](1+t)\\
    &+\left.\frac{j!(j-2i+2)}{(i-1)!(j-2i+2)!2^{i-1}}(1+t)^{j+1-2i}[D^{r+j+1-i}h_{k}](1+t)\right|_{i=\lfloor{\frac{j}{2}}\rfloor+1}\\
    &=\sum_{i=0}^{\lfloor{\frac{j+1}{2}}\rfloor}\frac{(j+1)!}{i!(j+1-2i)!2^{i}}(1+t)^{j+1-2i}[D^{r+j+1-i}h_{k}](1+t),
\end{align*}
where the last step follows from the observation that
\begin{equation}
   \left.\frac{j!(j-2i+2)}{(i-1)!(j-2i+2)!2^{i-1}}\right|_{i=\lfloor{\frac{j}{2}}\rfloor+1}=\left.\frac{(j+1)!}{i!(j+1-2i)!2^{i}}\right|_{i=\lfloor{\frac{j+1}{2}}\rfloor}.
\end{equation}
This completes the proof.
\end{proof}
\begin{lemma}\label{Comp1:lemma}
    \begin{equation}
       S:= \sum^{m}_{j=i}(-1)^{j}\binom{j}{i}\binom{2m-1-j}{m-1}\binom{s-1}{j-1}= \frac{m(-1)^i}{i}\binom{s-1}{i-1}\left[\binom{2m-s}{m-i}-2\binom{2m-s-1}{m-i-1}\right].
\end{equation}
\end{lemma}
\begin{proof} Using the relation $\binom{j}{i}\binom{s-1}{j-1}=\frac{j}{i}\binom{j-1}{i-1}\binom{s-1}{j-1}=\frac{j}{i}\binom{s-1}{i-1}\binom{s-i}{j-i}$ and replacing $j$ by $m-j$, the expression of $S$ takes the form
\begin{align}\label{S:expression}
   S:=\frac{(-1)^m}{i}\binom{s-1}{i-1} \sum^{m-i}_{j=0}(-1)^{j}(m-j)\binom{m-1+j}{j}\binom{s-i}{m-j-i}.
\end{align}
Noting that the binomial $\binom{s-i}{m-j-i}$ vanishes for $j>m-i$, we may extend the upper limit of summation in $j$ to infinity.
Using the identity
\begin{align*}
     \sum^{\infty}_{j=0}(m-j)\binom{m-1+j}{j}x^{j}=\frac{m}{(1-x)^m}-\frac{mx}{(1-x)^{m+1}}=m\frac{(1-2x)}{(1-x)^{m+1}}\quad\text{provided }|x|<1.
\end{align*}
Now consider
\begin{align*}
    &\sum^{\infty}_{j=0}(-1)^{j}(m-j)\binom{m-1+j}{j}\binom{s-i}{m-j-i}
    =\frac{1}{2\pi i}\int_{|z|=\epsilon}\frac{(1+z)^{s-i}}{z^{m-i+1}} \sum^{\infty}_{j=0}(m-j)\binom{m-1+j}{j}\left(-{z}\right)^{j}dz\\&=\frac{m}{2\pi i}\int_{|z|=\epsilon}\frac{(1+z)^{s-i}(1+2z)}{z^{m-i+1}(1+z)^{m+1}}dz
    =\frac{m}{2\pi i}\int_{|z|=\epsilon}\frac{(1+2z)}{z^{m-i+1}(1+z)^{m+i-s+1}}dz\\
    &=\frac{m}{2\pi i}\int_{|z|=\epsilon}\frac{1}{z^{m-i+1}}\sum_{p=0}^{\infty}\binom{m+i-s+p}{p}(-z)^{p}dz+\frac{2m}{2\pi i}\int_{|z|=\epsilon}\frac{1}{z^{m-i}}\sum_{p=0}^{\infty}\binom{m+i-s+p}{p}(-z)^{p}dz\\
    &=m(-1)^{m-i}\binom{2m-s}{m-i}+2m(-1)^{m-i-1}\binom{2m-s-1}{m-i-1}.
\end{align*}
Substituting this expression into \eqref{S:expression} yields the desired result, thereby completing the proof.
\end{proof}
\begin{lemma} For $m,r,k\in\mathbb{N}_{0}$, we have
  \begin{align}\label{Dmr:exp}
    D^{m}\{[D^rh_{k}](1\pm t)\}(t)&=\sum_{i=0}^{m}\sum_{s=i}^{2i}C_{\pm}(i,s;m)t^{s-2m}[D^{r+i}h_{k}](1\pm t),
\end{align}
where the constants $C_{\pm}(i,s;m)$ are given by
\begin{align}
    C_{+}(i,s;m)&=\frac{m!(-1)^{m-i}}{i!2^{m-i}}\binom{i}{s-i}\left[\binom{2m-s}{m-i}-2\binom{2m-s-1}{m-i-1}\right]\quad\text{and}\\
      C_{-}(i,s;m)&=\frac{m!(-1)^{s+m-i}}{i!2^{m-i}}\binom{i}{s-i}\left[\binom{2m-s}{m-i}-2\binom{2m-s-1}{m-i-1}\right].
\end{align}
\end{lemma}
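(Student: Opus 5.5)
We argue by induction on $m$, with $r$ fixed. The whole computation rests on one fact: $D=\frac{1}{t}\frac{d}{dt}$ acts on $[D^{r+i}h_k](1\pm t)$ by the chain rule. Since
\[
\frac{d}{dt}\{[D^{p}h_k](1\pm t)\}=\pm(1\pm t)[D^{p+1}h_k](1\pm t),
\]
we get
\[
D\{[D^{p}h_k](1\pm t)\}=\pm\frac{1\pm t}{t}[D^{p+1}h_k](1\pm t)=\Big(\pm t^{-1}+1\Big)[D^{p+1}h_k](1\pm t).
\]
We also need $D(t^{a})=a\,t^{a-2}$.

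\textbf{Base case and the recursion.} The case $m=0$ is immediate: only $i=s=0$ survives, with $C_\pm(0,0;0)=1$, using the convention $\binom{-1}{-1}=0$. For the inductive step we apply $D$ to the formula for $m$. By the Leibniz rule, each term $t^{s-2m}[D^{r+i}h_k](1\pm t)$ produces three terms:
\[
(s-2m)t^{s-2m-2}[D^{r+i}h_k]\;\pm\; t^{s-2m-1}[D^{r+i+1}h_k]\;+\;t^{s-2m}[D^{r+i+1}h_k].
\]
Rewriting each power as $t^{s'-2(m+1)}$, this gives the recursion
\[
C_\pm(i,s;m+1)=(s-2m)\,C_\pm(i,s;m)\pm C_\pm(i-1,s-1;m)+C_\pm(i-1,s-2;m).
\]
The index ranges match: from $i\le s\le 2i$ for level $m$, the three shifts land in $i\le s\le 2i$ for level $m+1$, after we interpret out-of-range coefficients as zero. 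Boundary cases such as $s=2i$ and $s=i$ are absorbed by the vanishing of $\binom{i}{s-i}$.

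\textbf{Reducing to an identity and verifying it.} We treat the $+$ case first. We set $E(i,s;m):=\binom{2m-s}{m-i}-2\binom{2m-s-1}{m-i-1}$ and factor out $\frac{m!(-1)^{m-i}}{i!2^{m-i}}$. The recursion then becomes a polynomial identity among binomials:
\[
(m+1)\binom{i}{s-i}E(i,s;m+1)=-\tfrac{1}{2}(s-2m)\binom{i}{s-i}E(i,s;m)+i\binom{i-1}{s-i}E(i-1,s-1;m)+i\binom{i-1}{s-i-1}E(i-1,s-2;m).
\]
We would verify this by converting every binomial to factorials relative to the common quantity $\frac{(2m-s-1)!}{(m-i)!(m-i+s)!\dots}$. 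An equivalent and cleaner route is to note that $E(i,s;m)=\frac{s-i}{m-i+ \dots}\binom{\cdot}{\cdot}$-type closed forms follow from $\binom{2m-s}{m-i}-2\binom{2m-s-1}{m-i-1}=\binom{2m-s-1}{m-i}\frac{(s-2i)\cdot\ldots}{\ldots}$ after simplification; once this is done, Pascal's rule reduces the identity to a linear relation in $s$ and $i$ that is checked directly. If the hand algebra becomes unwieldy, we would instead use generating functions. We would write $t^{-2m}\sum_{s} C(i,s;m)t^{s}$ and compare it with the closed form obtained by iterating the operator $D$ with multiplier $(1+t^{-1})$, which is similar in spirit to the contour-integral computations of Lemma \ref{Comp1:lemma}. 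An alternative derivation starts from Proposition \ref{fder} and Lemma \ref{djDrh:expression}, combines the two, and collapses the inner sum with Lemma \ref{Comp1:lemma}; that lemma's right-hand side has exactly the bracket $E$, which strongly suggests this was the intended route.

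\textbf{The $-$ case.} The $-$ case differs from the $+$ case only through the sign in the middle term of the recursion. The factor $(-1)^{s}$ in $C_-$ accounts for it: the shift $s\to s-1$ flips the sign, while the shifts $s\to s$ and $s\to s-2$ preserve it, so the same identity applies.

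\textbf{Main obstacle.} The hard step is the binomial identity for $E$. The cleanest way through is probably the second route: expand $D^m$ via Proposition \ref{fder} into $\sum_j B(j,m)t^{j-2m}\frac{d^j}{dt^j}$, apply Lemma \ref{djDrh:expression}, and regroup by $i$ and $s$. The coefficient sum over $j$ is then precisely the sum $S$ of Lemma \ref{Comp1:lemma}, with $s-1$ counting the power of $(1+t)$ after expanding binomially.
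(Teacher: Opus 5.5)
Your recursion is correct. From $D\{[D^{p}h_k](1\pm t)\}=(1\pm t^{-1})[D^{p+1}h_k](1\pm t)$ and $D(t^{a})=a\,t^{a-2}$ you get $C_\pm(i,s;m+1)=(s-2m)C_\pm(i,s;m)\pm C_\pm(i-1,s-1;m)+C_\pm(i-1,s-2;m)$, and your $(-1)^{s}$ argument does reduce the $-$ case to the $+$ case. \textbf{However, the proposal stops exactly where the content of the lemma lies}, namely at checking that the stated $C_+$ satisfies this recursion.

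First, the reduced identity you write is wrong. Dividing the recursion by the prefactor $\frac{(m+1)!\,(-1)^{m+1-i}}{i!\,2^{m+1-i}}$ of $C_+(i,s;m+1)$ turns $(s-2m)C_+(i,s;m)$ into $-2(s-2m)\binom{i}{s-i}E(i,s;m)$, not $-\tfrac12(s-2m)\binom{i}{s-i}E(i,s;m)$. Your version already fails at $m=1$, $i=s=1$: the left side is $2$ and the right side is $\tfrac12$.

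Second, no identity is actually verified. The ``closed forms'' you quote contain placeholders. The generating-function route and the route through Proposition~\ref{fder}, Lemma~\ref{djDrh:expression} and Lemma~\ref{Comp1:lemma} are only named. The latter is the paper's proof. It needs the reindexing $i\to j-i$, $s\to s-j$, the identity $\binom{2i-1}{2i-j}\binom{2i-j}{2i-s}=\binom{2i-1}{2i-s}\binom{s-1}{j-1}$, and a justification for extending the summation ranges before the inner sum over $j$ becomes the sum of Lemma~\ref{Comp1:lemma}. None of that is carried out.

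Your induction can be completed, and it would then be a more self-contained alternative to the paper's argument. Use the following facts:
\begin{itemize}
\item $E(i,s;m)=\frac{2i-s}{2m-s}\binom{2m-s}{m-i}$, valid for $2m-s\ge 1$;
\item $i\binom{i-1}{s-i}=(2i-s)\binom{i}{s-i}$ and $i\binom{i-1}{s-i-1}=(s-i)\binom{i}{s-i}$;
\item $E(i-1,s-2;m)=E(i,s;m+1)$.
\end{itemize}
Set $a=2m-s$, $b=m-i$, $c=2i-s$, so that $a=2b+c$ and $m+1-(s-i)=a-b+1$. After cancelling $c\binom{i}{s-i}$, the corrected identity becomes
\[
\frac{a-b+1}{a+2}\binom{a+2}{b+1}=2\binom{a}{b}+\frac{c-1}{a+1}\binom{a+1}{b+1}.
\]
Both sides equal $\frac{(a+1)!}{(b+1)!\,(a-b)!}$, since $2(b+1)+c-1=a+1$.

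The leftover cases are direct:
\begin{itemize}
\item If $c=0$, every term of the uncancelled identity vanishes. Conversely, $c\ge 1$ forces $a\ge 1$ whenever $i\le m$, so the closed form applies.
\item For the top index $i=m+1$, the recursion is just Pascal's rule applied to $C_+(m,s;m)=\binom{m}{s-m}$. This gives $C_+(m+1,s;m+1)=\binom{m+1}{s-m-1}$, as the formula predicts.
\end{itemize}

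Compared with the paper, this route avoids Proposition~\ref{fder} and the contour-integral Lemma~\ref{Comp1:lemma} altogether. The price is that it verifies the coefficients inductively rather than deriving them directly.
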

 For brevity, we present the derivation only for $D^{m}\{[D^{r}h_{k}](1+t)\}(t)$, since the formula for $D^{m}\{[D^{r}h_{k}](1-t)\}(t)$ follows analogously.
\begin{proof} Using Proposition \ref{fder} for $g(t)=[D^{r}h_{k}](1+t)$, we get
\begin{align}\label{DmDrh:express}
D^{m}\{[D^rh_{k}](1+t)\}(t)&=\sum_{j=0}^{m}(-1)^{m-j}\frac{(m-j)!}{2^{m-j}}\binom{m-1}{j-1}\binom{2m-1-j}{m-1}\frac{1}{t^{2m-j}}\frac{d^j}{dt^j}\{[D^rh_{k}](1+t)\}.
\end{align}
Employing Lemma \ref{djDrh:expression}, we get
\begin{multline*}
    D^{m}\{[D^rh_{k}](1+t)\}(t)
=\sum_{j=1}^{m}\sum_{i=0}^{\lfloor{\frac{j}{2}}\rfloor}(-1)^{m-j}\frac{(m-j)!(2i)!}{i!2^{m-j+i}}\binom{m-1}{j-1}\binom{2m-1-j}{m-1}\binom{j}{j-2i}\times\\\frac{(1+t)^{j-2i}}{t^{2m-j}}[D^{r+j-i}h_{k}](1+t).
\end{multline*}
Expanding $(1+t)^{j-2i}$, we get
\begin{multline*}
     D^{m}\{[D^rh_{k}](1+t)\}(t)
=\sum_{j=1}^{m}\sum_{i=0}^{\lfloor{\frac{j}{2}}\rfloor}\sum_{s=0}^{j-2i}(-1)^{m-j}\frac{(m-j)!(2i)!}{i!2^{m-j+i}}\binom{m-1}{j-1}\binom{2m-1-j}{m-1}\binom{j}{j-2i}\times\\\binom{j-2i}{s}t^{s+j-2m}[D^{r+j-i}h_{k}](1+t).
\end{multline*}
Replacing $i$ by $j-i$ and $s$ by $s-j$, we get
\begin{align*}
   &D^{m}\{[D^rh_{k}](1+t)\}(t)\\
=&\sum_{j=1}^{m}\sum^{j}_{i=j-\lfloor{\frac{j}{2}}\rfloor}\sum_{s=j}^{2i}(-1)^{m-j}\frac{(m-j)!(2j-2i)!}{(j-i)!2^{m-i}}\binom{m-1}{j-1}\binom{2m-1-j}{m-1}\binom{j}{2i-j}\binom{2i-j}{s-j}\times\\
&t^{s-2m}[D^{r+i}h_{k}](1+t)\\
=&(m-1)!(-1)^{m}\frac{1}{2^m}\sum_{j=1}^{m}\sum^{j}_{i=j-\lfloor{\frac{j}{2}}\rfloor}\sum_{s=j}^{2i}(-1)^{j}2^{i}\frac{i!}{(2i-1)!}\binom{2i-1}{2i-j}\binom{j}{i}\binom{2m-1-j}{m-1}\binom{2i-j}{2i-s}\times\\&t^{s-2m}[D^{r+i}h_{k}](1+t)\\
=&(m-1)!(-1)^{m}\frac{1}{2^m}\sum_{j=1}^{m}\sum^{j}_{i=\lfloor{\frac{j+1}{2}}\rfloor}\sum_{s=j}^{2i}(-1)^{j}2^{i}\frac{i!}{(2i-1)!}\binom{2i-1}{2i-s}\binom{j}{i}\binom{2m-1-j}{m-1}\binom{s-1}{j-1}\times\\&t^{s-2m}[D^{r+i}h_{k}](1+t),
\end{align*}
where the last step follows from an application of $\binom{a}{b}\binom{b}{c}=\binom{a}{c}\binom{a-c}{a-b}$.
Using the fact that for $i\in\mathbb{Z}$ and $x\in\mathbb{R}$,  $n<[x]\implies n+1\leq [x]$, we get 
\begin{equation}
    i<\left[\frac{j+1}{2}\right]\implies i+1\leq \left[\frac{j+1}{2}\right]\implies 2i+1\leq j\implies 2i-s\leq 2i-j\leq -1,
\end{equation}
since $j\leq s\leq 2i$. Note that for $i<\left[\frac{j+1}{2}\right]$, the upper index for $s$ is smaller than the lower index of $s$. Hence, the summation in $s$ is empty in this case. Therefore, the term $\binom{2i-1}{2i-s}$ vanishes for $i<\left[\frac{j+1}{2}\right]$. Therefore the lower limit of $s$ can be extended to be $1$.  
Furthermore, the lower limit of $s$ can be extended to $s=i$ due to the term $\binom{s-1}{j-1}$. Upon interchanging the order of summation, we obtain
\begin{multline}\label{Dmr:expre}
    D^{m}\{[D^rh_{k}](1+t)\}(t)=(m-1)!(-1)^{m}\frac{1}{2^m}\sum_{i=1}^{m}\sum_{s=i}^{2i}\frac{i!2^i}{(2i-1)!}\binom{2i-1}{2i-s}\times\\\underbrace{\left(\sum^{m}_{j=i}(-1)^{j}\binom{j}{i}\binom{2m-1-j}{m-1}\binom{s-1}{j-1}\right)}_{S}t^{s-2m}[D^{r+i}h_{k}](1+t).
\end{multline}
Invoking Lemma \ref{Comp1:lemma} for $S$, we obtain
\begin{align*}
     &D^{m}\{[D^rh_{k}](1+t)\}(t)
    =\sum_{i=1}^{m}\sum_{s=i}^{2i}C_{+}(i,s;m)t^{s-2m}[D^{r+i}h_{k}](1+t),
\end{align*}
where
\begin{align}
    C_{+}(i,s;m)=\frac{m!(-1)^{m-i}}{i!2^{m-i}}\binom{i}{s-i}\left[\binom{2m-s}{m-i}-2\binom{2m-s-1}{m-i-1}\right].
\end{align}
Noting that $C(0,0;0)=1$ and $C(0,0;m)=0$ for $m\geq 1$,
 the above expression can be extended to include the case of $m=0$.  A similar computation yields the following formula for $D^{m}\{[D^rh_{k}](1-t)\}(t)$ given above. This completes the proof.
 \end{proof}

\begin{lemma}\label{com2:lemma} For $k,j,i,s\in\mathbb{N}_{0}$, we have
    \begin{equation}
         S:=\sum_{p=0}^{\infty}\binom{2k-2j-2p}{k-j-p}\left[\binom{2p-s+2j}{p-i+j}-2\binom{2p-s+2j-1}{p-i+j-1}\right]=\binom{2k-s}{k-i}.
    \end{equation}
\end{lemma}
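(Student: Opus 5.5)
The plan is to recognise the bracket as a ballot (Catalan-convolution) number and then evaluate the sum by one generating-function identity, namely a coefficient of $(1-4x)^{-1/2}c(x)^r$, with $c$ the Catalan generating function.

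\emph{Step 1: reindex.} First I reindex by $P:=p+j$ and then $n:=P-i$, and set $r:=2i-s$. The summand becomes
\[
\binom{2(k-i-n)}{k-i-n}\left[\binom{2n+r}{n}-2\binom{2n+r-1}{n-1}\right].
\]
Using Pascal's rule $\binom{2n+r}{n}=\binom{2n+r-1}{n}+\binom{2n+r-1}{n-1}$, the bracket equals the ballot number
\[
\beta_r(n):=\binom{2n+r-1}{n}-\binom{2n+r-1}{n-1}=\frac{r}{2n+r}\binom{2n+r}{n}.
\]
Here one takes the convention $\beta_0(n)=\delta_{n0}$, which is exactly what the binomials give with $\binom{-1}{0}=1$.

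\emph{Step 2: the summation range.} In the context where Lemma \ref{com2:lemma} is applied (the sum \eqref{ftilde:4sum}), we have $i\ge j$ and $i+j\le s\le 2i$, so $r\ge 0$. The original range $p\ge 0$ corresponds to $n\ge j-i$, which contains all $n\ge 0$. Every term with $n<0$ vanishes because its binomials have negative lower index. The central binomial factor vanishes for $n>k-i$. Hence
\[
S=\sum_{n=0}^{k-i}\binom{2(k-i-n)}{k-i-n}\beta_r(n).
\]

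\emph{Step 3: generating functions.} I will use two classical generating functions. The first is $\sum_{m\ge0}\binom{2m}{m}x^m=(1-4x)^{-1/2}$. The second is $\sum_{n\ge0}\beta_r(n)x^n=c(x)^r$, where $c(x)=\frac{1-\sqrt{1-4x}}{2x}$; this follows from Lagrange inversion applied to $c=1+xc^2$. Lagrange inversion applied to $c=1+xc^2$ also gives the standard identity
\[
\frac{c(x)^r}{\sqrt{1-4x}}=\sum_{N\ge0}\binom{2N+r}{N}x^N.
\]
Therefore $S$ is the coefficient of $x^{k-i}$ in this product, so
\[
S=\binom{2(k-i)+r}{k-i}=\binom{2k-s}{k-i}.
\]

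If one prefers to stay within the paper's contour-integral style, the same computation can be carried out as a residue calculation with $\frac{1}{2\pi i}\oint$.

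\emph{Main obstacle.} The main difficulty is bookkeeping at the boundary cases rather than the identity itself. One must check that the binomial conventions used in \eqref{ftilde:4sum} make the $p$-sum genuinely infinite but finitely supported. One must also check that $r=0$ and the terms with $n<0$ behave as claimed, and that the case $i=k$ (where $\binom{2k-s-1}{-1}=0$) is consistent. I would verify these edge cases directly before invoking the generating-function identity.
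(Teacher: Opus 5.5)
Your proof is correct, but it follows a different route from the paper's. The paper never merges the two binomials in the bracket. It writes $S=S_1-2S_2$ and represents each binomial by a contour integral in its own variable ($z$ for the central factor, $w$ for the other). It then sums the geometric series in $p$ for $|z|<|w|$, takes the residue at $w=z$, and only at the end recombines $S_1-2S_2$ into $\frac{1}{2\pi i}\oint(1+z)^{2k-s}z^{-(k-i+1)}\,\D z$. You instead use Pascal's rule to identify the bracket as the ballot number $[x^n]\,c(x)^r$. Then $S$ is the coefficient of $x^{k-i}$ in the product of $(1-4x)^{-1/2}$ and $c(x)^r$, and the lemma reduces to the classical identity $c(x)^r(1-4x)^{-1/2}=\sum_N\binom{2N+r}{N}x^N$. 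Your version explains where the combination $\binom{a}{b}-2\binom{a-1}{b-1}$ comes from and makes the effective summation range explicit, at the cost of importing Lagrange inversion. The paper's computation is self-contained and matches the residue style of the rest of the appendix.

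Your restriction to the setting of \eqref{ftilde:4sum} is not just a convenience, because the condition $i\ge j$ is genuinely needed. For $(k,j,i,s)=(1,1,0,0)$ only $p=0$ contributes, and $S=\binom{0}{0}\bigl[\binom{2}{1}-2\binom{1}{0}\bigr]=0\neq 2=\binom{2}{1}$. So the lemma as stated for all $k,j,i,s\in\mathbb{N}_0$ is false. The paper's proof uses $i\ge j$ tacitly: for $i<j$ the factor $w^{i-j}$ creates an extra pole at $w=0$ that the residue computation ignores. Your other restriction, $r\ge 0$, can be dropped. Lagrange--B\"urmann gives $\sum_N\binom{2N+r}{N}x^N=c(x)^r(1-4x)^{-1/2}$ for every integer $r$. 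Combining the $r-1$ case with the shifted $r+1$ case, and using $c\sqrt{1-4x}=1-xc^2$, shows that the bracket has generating function $c(x)^r$ for every integer $r$. So, as in the paper's argument, $i\ge j$ is the only hypothesis required.
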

\begin{proof}
   Decomposing $S$ as
\begin{align*}
    S=\sum_{p=0}^{\infty}\binom{2k-2j-2p}{k-j-p}\left[\binom{2p-s+2j}{p-i+j}-2\binom{2p-s+2j-1}{p-i+j-1}\right]=S_1-2S_2.
\end{align*}
Then,
\begin{align*}
    S_1=&\sum_{p=0}^{\infty}\binom{2k-2j-2p}{k-j-p}\binom{2p-s+2j}{p-i+j}\\
=&\frac{1}{(2\pi i)^2}\int_{\norm{z}=\epsilon} \frac{(1+z)^{2k-2j}}{z^{k-j+1}}\int_{\norm{w}=\gamma}\frac{(1+w)^{2j-s}}{w^{j-i+1}}\sum_{p=0}^{\infty}\left[\frac{(1+w)^2z}{(1+z)^2w}\right]^{p}dwdz.
\end{align*}
If we choose $\epsilon$ and $\gamma$ such that $\left|\frac{(1+w)^2z}{(1+z)^2w}\right|<1$, then the series $\sum_{p=0}^{\infty}\left[\frac{(1+w)^2z}{(1+z)^2w}\right]^{p}$ converges and  we get
\begin{align*}
      S_{1}=&\frac{1}{(2\pi i)^2}\int_{\norm{z}=\epsilon} \frac{(1+z)^{2k-2j}}{z^{k-j+1}}\int_{\norm{w}=\gamma}\frac{(1+w)^{2j-s}}{w^{j-i+1}}\frac{(1+z)^2w}{(1+z)^2w-(1+w)^2z}dwdz\\
=&\frac{1}{(2\pi i)^2}\int_{\norm{z}=\epsilon} \frac{(1+z)^{2k-2j+2}}{z^{k-j+1}}\int_{\norm{w}=\gamma}\frac{(1+w)^{2j-s}w^{i-j}}{(w-z)(1-zw)}dwdz.
\end{align*}
It is easy to check that
\begin{equation*}
\lim_{w\to z}\frac{(1+w)^{2j-s}w^{i-j}}{(w-z)(1-zw)}=\infty\quad \text{and}\quad 	\lim_{w\to z}\frac{(1+w)^{2j-s}w^{i-j}}{(1-zw)}=\frac{(1+z)^{2j-s}z^{i-j}}{(1+z)(1-z)}\neq 0,\infty,
\end{equation*}
for sufficiently small $\epsilon$ and hence, $w=z$ is a simple pole inside $|w|=\gamma$. Now using the Residue theorem, we get
\begin{align*}
S_{1}=\frac{1}{2\pi i}\int_{\norm{z}=\epsilon} \frac{(1+z)^{2k-2j+2}}{z^{k-j+1}}\frac{(1+z)^{2j-s-1}z^{i-j}}{1-z}dz=\frac{1}{2\pi i}\int_{\norm{z}=\epsilon} \frac{(1+z)^{2k-s+1}}{z^{k-i+1}(1-z)}dz.
\end{align*}
In a similar manner, 	 $S_{2}$ assumes the form
\begin{align*}
    S_2=\sum_{p=0}^{\infty}\binom{2k-2j-2p}{k-j-p}\binom{2p-s+2j-1}{p-i+j-1}
=&\frac{1}{2\pi i}\int_{\norm{z}=\epsilon} \frac{(1+z)^{2k-s}}{z^{k-i}(1-z)}.
\end{align*}
Thus, we have
\begin{align*}
   S= S_1-2S_2=&\frac{1}{2\pi i}\int_{\norm{z}=\epsilon} \frac{(1+z)^{2k-s}}{z^{k-i}(1-z)}\left[\frac{1+z}{z}-2\right]dz
    =\frac{1}{2\pi i}\int_{\norm{z}=\epsilon} \frac{(1+z)^{2k-s}}{z^{k-i+1}}dz=\binom{2k-s}{k-i}.
\end{align*}
\end{proof}
\begin{lemma}\label{mainlemma} For $k\in\mathbb{N}$, we have
\begin{align*} 
    F(t,u)=&\mathcal{L}_k\left\{(t^2-1)(Q(t,u))^{k-1}\right\}+\frac{(1-t)^2}{2}\mathcal{L}_{k-1}D\left\{(Q(t,u))^{k-1}(1+u^2-t^2)\right\}\equiv 0.
\end{align*}
\end{lemma}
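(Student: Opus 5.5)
The identity in Lemma \ref{mainlemma} is purely algebraic in $(t,u)$, with all operators ($D=\frac1t\frac{d}{dt}$, $\mathcal{L}_k$, $\mathcal{L}_{k-1}$) acting in $t$ and $u$ treated as a parameter. So I would prove it by rewriting both terms in a common form and comparing coefficients.

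\textbf{Step 1: compute $D$ on $Q$.} Expanding
\[
Q(t,u)=-\bigl[u^4-2u^2(1+t^2)+(1-t^2)^2\bigr]
\]
gives
\[
DQ=4(1+u^2-t^2),\qquad D^2Q=-8,\qquad D^jQ=0 \ (j\ge 3).
\]
Hence $D\{Q^k\}=4kQ^{k-1}(1+u^2-t^2)$, and the second term of $F$ becomes $\frac{(1-t)^2}{8k}\mathcal{L}_{k-1}D^2\{Q^k\}$. This matches the operator identity $\frac{1}{2^3k}\mathcal{L}_kD+\frac{(-1)^k(1-t)^{2k}}{2^2k}T=\frac{(1-t)^2}{2^3k}\mathcal{L}_{k-1}D^2$ used just before the lemma. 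It is also useful to record that $t^2-1$ is itself quadratic in $s=t^2$, with $D(t^2-1)=2$.

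\textbf{Step 2: expand both terms in the same basis.} Since $Q$ is a quadratic polynomial in $s=t^2$, and $D=2\frac{d}{ds}$, a Faà di Bruno computation (as in Step 2 of Section \ref{Mthm:step2}) gives
\[
D^{j}\{Q^{m}\}=\sum_{q}(-1)^{j-q}\,j!\,4^{q}\binom{q}{2q-j}\binom{m}{q}\,Q^{m-q}(1+u^2-t^2)^{2q-j}.
\]
By Leibniz, $D^{j}\{(t^2-1)Q^{k-1}\}=(t^2-1)D^j\{Q^{k-1}\}+2jD^{j-1}\{Q^{k-1}\}$.

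I would substitute these into $\mathcal{L}_k$ and $\mathcal{L}_{k-1}$ using \eqref{lkh:operator}. Then I would write $t^2-1=-(1-t)(1+t)$ and $1+u^2-t^2=\tfrac12\bigl[(u^2-(1-t)^2)+((1+t)^2-u^2)\bigr]$. With this, everything becomes a finite sum of monomials
\[
(1-t)^{a}(1+t)^{b}\,A^{c}B^{d},\qquad A=(1+t)^2-u^2,\quad B=u^2-(1-t)^2.
\]

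\textbf{Step 3: compare coefficients.} I would match the coefficients of both terms of $F$ on these monomials. This reduces $F\equiv0$ to a family of binomial-sum identities. I expect these to be handled by the contour-integral and generating-function technique of Lemmas \ref{Comp1:lemma} and \ref{com2:lemma}: encode the sum over $l$ in $\mathcal{L}_k$ via $\frac{(k+l)!}{(k-l)!\,l!\,2^l}$ as the coefficient of a Bessel-polynomial generating function, and extract residues.

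\textbf{Main obstacle and fallback.} The main obstacle is this coefficient bookkeeping: the $(1-t)^{k-l}$ weights in $\mathcal{L}_k$ are not polynomials in $s$, so the basis must be chosen carefully. If the direct comparison becomes unwieldy, I would use induction on $k$ instead. The plan there is to find a recursion relating $\mathcal{L}_k$ to $\mathcal{L}_{k-1}$ of the form $\mathcal{L}_k=(1-t)\mathcal{L}_{k-1}D+c_k\,\mathcal{L}_{k-1}+\dots$, obtained by reindexing \eqref{lkh:operator}. Combined with $D\{Q^{k}\}=4kQ^{k-1}(1+u^2-t^2)$ and $D^2Q=-8$, this should reduce $F_k$ to a multiple of $F_{k-1}$. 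The base case $k=1$ is a direct check:
\[
\mathcal{L}_1\{t^2-1\}+\tfrac{(1-t)^2}{2}D(1+u^2-t^2)=(1-t)\cdot 2+(t^2-1)-(1-t)^2=0 .
\]
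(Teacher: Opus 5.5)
Your Steps 1 and 2 are correct and match the paper's setup. Your Leibniz split $D^{j}\{(t^2-1)Q^{k-1}\}=(t^2-1)D^{j}\{Q^{k-1}\}+2jD^{j-1}\{Q^{k-1}\}$ is exactly the paper's decomposition $F=T_1+T_2+T_3$, and your Fa\`a di Bruno formula is the one it uses. The gap is Step 3, which is where all the content of the lemma lies.

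First, the identity $1+u^2-t^2=\tfrac12\bigl[(u^2-(1-t)^2)+((1+t)^2-u^2)\bigr]$ is false: the right-hand side equals $2t$. The correct relations are $1+u^2-t^2=(u^2-(1-t)^2)+2(1-t)=2(1+t)-((1+t)^2-u^2)$.

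More seriously, your products $(1-t)^a(1+t)^bA^cB^d$ are not linearly independent, since $(1-t)+(1+t)=2$ and $A+B=4t=2(1+t)-2(1-t)$. So ``comparing coefficients'' is not a valid reduction of $F\equiv 0$, and already for $k=1$ the coefficients do not vanish individually. Your own base case shows this: $F=2(1-t)-(1-t)(1+t)-(1-t)^2$ has coefficients $2,-1,-1$ and vanishes only because $(1-t)+(1+t)=2$. The binomial-sum identities you would extract are therefore generally false unless you first pass to a genuine basis, which you do not specify. Even then, you state no actual identity, only the expectation that residue methods will handle it.

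The induction fallback is likewise unsubstantiated. An exact recursion is available, the Bessel-polynomial one $\mathcal{L}_k=(2k-1)\mathcal{L}_{k-1}+(1-t)^2\mathcal{L}_{k-2}D^2$, valid because the coefficients stand to the left of the $D$'s. But applied to $F_k$ it produces operators acting on $Q^{k-1}$-type functions, while $F_{k-1}$ involves $Q^{k-2}$. Lowering the power via $D\{Q^m\}=4mQ^{m-1}(1+u^2-t^2)$ introduces a $t$-dependent factor, and nothing in your sketch closes this.

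The missing idea, which the paper supplies, is to \emph{exploit} the relations instead of matching coefficients. In the paper's notation (not yours), $A=1+u^2-t^2$, $B=1-t$, $C=1+t$, so $B+C=2$ and $Q=4A-A^2-4BC$. Splitting $k-q=l+(k-l-q)$ in $T_1$ extracts a common sum $M$ with $T_1=CM+E_1$, $T_2=-2M+E_2$, $T_3=BM+E_3$. Hence $F=(B+C-2)M+E_1+E_2+E_3=E_1+E_2+E_3$. Then substituting $4BC=4A-A^2-Q$ in $E_1$ and shifting indices makes $E_1+E_2$ cancel $E_3$ term by term. The only terms left are $(l,q)=(k+1,0)$ and $(k+2,-1)$, and there the factor $3k+l-2q-2kl-4kq+3k^2-l^2$ vanishes.
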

For ease of notation, we will write $Q$ instead of $Q(t,u)$ in the following proof.

Before we give the proof of this lemma, let us consider a few special cases. 

With the notation $T_1, T_2, T_3, A, B, C$ from below (see \eqref{Tis} and \eqref{ABC}), we have the following special cases verifying the conclusion of this lemma: 

\begin{enumerate}
    \item $k=1$
    \begin{align*}
        T_{1}&=-BC\\
        T_{2}&=2B\\
        T_{3}&=-B^2
    \end{align*}
Hence
\begin{align*}
    F(t,u)&=T_{1}+T_{2}+T_{3}=-B(B+C-2)=0,
\end{align*}
since $B+C-2=0$.
\item $k=2$
    \begin{align*}
        T_{1}
        &=-3BC(4A-A^2+4AB-4BC)+8B^3C\\
        T_{2}
        &=6B(4A-A^2+4AB-4BC)-8AB^2\\
        T_{3}
        &=-3B^2(4A-A^2+4AB-4BC)+8AB^2-8B^3C.
    \end{align*}
Hence \begin{align*}
    F(t,u)&=T_{1}+T_{2}+T_{3}=-3(B+C-2)(4A-A^2+4AB-4BC)=0, 
    \end{align*}
    due to the factor $B+C-2$ which is $0$. 
\item $k=3$
\begin{align*}        T_{1}
&=-3BC \Big{(}5A^4-40A^3B-40A^3+80A^2B^2+40A^2BC+160A^2B+80A^2\\&-160AB^2C
-64AB^2-160ABC+64B^3C+80B^2C^2\Big{)}\\
&-48A^2B^3C+192AB^4C+192AB^3C-192B^4C^2\\
        T_{2}
&=6B \Big{(}5A^4-40A^3B-40A^3+80A^2B^2+40A^2BC+160A^2B+80A^2-160AB^2C\\
&-64AB^2-160ABC+64B^3C+80B^2C^2\Big{)}\\&+48A^3B^2-192A^2B^3-192A^2B^2+192AB^3C\\
    T_{3}
&=-3B^{2} \Big{(}5A^4-40A^3B-40A^3+80A^2B^2+40A^2BC+160A^2B+80A^2-160AB^2C\\
    &-64AB^2-160ABC+64B^3C+80B^2C^2\Big{)}-48A^3B^2+192A^2B^3+192A^2B^2\\
&-192AB^3C+48A^2B^3C-192AB^4C-192AB^3C+192B^4C^2.
    \end{align*}
    Hence \begin{align*}
    F(t,u)&=T_{1}+T_{2}+T_{3}\\
    &=-3B(B+C-2) \left(5A^4-40A^3B-40A^3+80A^2B^2+40A^2BC\right.\\
    &\left.+160A^2B+80A^2-160AB^2C-64AB^2-160ABC+64B^3C+80^2C^2\right)=0.
\end{align*}
\end{enumerate} 
We now give the proof for general $k$. 
\begin{proof}
Consider
\begin{align*}
    F(t,u)=&\mathcal{L}_k\left\{(t^2-1)Q^{k-1}\right\}+\frac{(1-t)^2}{2}\mathcal{L}_{k-1}D\left\{Q^{k-1}(1+u^2-t^2)\right\}\\
    =&\sum_{l=0}^{k}\frac{(k+l)!}{(k-l)!l!2^l}(1-t)^{k-l}D^{k-l}\left\{(t^2-1)Q^{k-1}\right\}\\
    &+\sum_{l=0}^{k-1}\frac{(k-1+l)!}{(k-1-l)!l!2^{l+1}}(1-t)^{k-l+1} D^{k-l}\left\{Q^{k-1}(u^2+1-t^2)\right\}\\
    &=T_{1}+T_{2}+T_{3},
\end{align*}
where $T_{j},j=1,2,3$ are given by
\Beq\label{Tis}
\begin{aligned}
    T_{1}&:=\sum_{l=0}^{k}\frac{(k+l)!}{(k-l)!l!2^l}(1-t)^{k-l}(t^2-1)D^{k-l}\left\{Q^{k-1}\right\};\\
    T_{2}&:=\sum_{l=0}^{k}\frac{(k+l)!(k-l)}{(k-l)!l!2^{l-1}}(1-t)^{k-l}D^{k-l-1}\left\{Q^{k-1}\right\};\text{ and}\\
    T_{3}&:=\sum_{l=0}^{k-1}\frac{(k-1+l)!}{(k-1-l)!l!2^{l+1}}(1-t)^{k-l+1} D^{k-l}\left\{Q^{k-1}(u^2+1-t^2)\right\}.
\end{aligned}
\Eeq
Introducing the notations 
\Beq\label{ABC}
A:=1+u^2-t^2,~B:=1-t,~\text{and}~C:=1+t,
\Eeq  we have \begin{align*}
    Q(t,u)&=4(1+u^2-t^2)-(1+u^2-t^2)^2-4(1-t)(1+t)=4A-A^2-4BC,
\end{align*}
 $D\left\{Q(t,u)\right\}=4A,\text{ and } D^{2}\left\{Q(t,u)\right\}=-8.$ 
Using the Faa di Bruno's formula, we obtain
\begin{align}
    T_{1}&=
    C\sum_{l=0}^{k}\sum_{q \geq \frac{k-l}{2}}^{k-l}\frac{(k+l)!}{(k-l)!l!2^l}  \frac{(-1)^{k+l+q+1}(k-l)!2^{2q}}{(2 q-k+l)!(k-l-q)!}\frac{(k-1)!}{(k-q)!}(k-q)Q^{k-1-q}A^{2 q-k+l}B^{k-l+1};\label{T1:exp}\\
    T_2&=\sum_{l=0}^{k-1}\sum_{q \geq \frac{k-l-1}{2}}^{k-l-1} \frac{(k+l)!}{(k-l)!l!2^l} \frac{(-1)^{k+l+q+1}(k-l)!2^{2q+1}}{(2 q-k+l+1)!(k-l-1-q)!}\frac{(k-1)!}{(k-1-q)!}Q^{k-1-q}A^{2 q-k+l+1}B^{k-l};\\
    T_{3}&=\sum_{l=0}^{k-1}\sum_{q\geq \frac{k-l-1}{2}}^{k-l}\frac{(k-1+l)!}{(k-1-l)!l!2^{l+1}}\frac{(-1)^{k+l+q}(k-l+1)!2^{2q}}{(2q-k+l+1)!(k-l-q)!}\frac{(k-1)!}{(k-1-q)!}Q^{k-1-q}A^{2 q-k+l+1}B^{k-l+1}.
\end{align}
Our first goal is to express $T_{j},j=1,2,3$ as follows
\begin{align*}
    T_{1}=CM+E_{1},\quad T_{2}=-2M+E_{2},\quad \text{and } T_{3}=BM+E_{3},
\end{align*}
whence we get
\begin{align*}
    F(t,u)=T_{1}+T_{2}+T_{3}=(C-2+B)M+E_{1}+E_{2}+E_{3}=E_{1}+E_{2}+E_{3},
\end{align*}
since $C-2+B\equiv 0$, where the expressions of $M,E_{1},E_{2},$ and $E_{3}$ will be specified later. Then our goal boils down to showing $$F(t,u)=E_{1}+E_{2}+E_{3}\equiv 0.$$
\subsection*{Expressing $T_{1}$ as $ T_{1}:=CM+E_{1}$}
From the expression of $T_1$ we have,
\begin{align*}
    T_{1}:&=
    C\sum_{l=0}^{k}\sum_{q \geq \frac{k-l}{2}}^{k-l}\frac{(k+l)!}{l!2^l}  \frac{(-1)^{k+l+q+1}2^{2q}}{(2 q-k+l)!(k-l-q)!}\frac{(k-1)!}{(k-q)!}(k-q)Q^{k-1-q}A^{2 q-k+l}B^{k-l+1}.
    \end{align*}
Separating the term $l=0$, and separating the terms in first term as: $1=\frac{k-l-q}{k-q}+\frac{l}{k-q},q\neq k$,  we get
    \begin{align}
    T_1=&C\sum_{l=1}^{k}\sum_{q \geq \frac{k-l}{2}}^{k-l}\frac{(k+l)!}{l!2^l}  \frac{(-1)^{k+l+q+1}2^{2q}}{(2 q-k+l)!(k-l-q)!}\frac{(k-1)!}{(k-q)!}(k-q)Q^{k-1-q}A^{2 q-k+l}B^{k-l+1}\\
    &+C\sum_{q \geq \frac{k}{2}}^{k}  \frac{(-1)^{k+q+1}k!2^{2q}}{(2 q-k)!(k-q)!}\frac{(k-1)!}{(k-q)!}(k-q)Q^{k-1-q}A^{2 q-k}B^{k+1}\\
    &\text{splitting first term as $k-q=l+k-l-q$   }\\
    =&
    C\sum_{l=1}^{k}\sum_{q \geq \frac{k-l}{2}}^{k-l}\frac{(k+l)!}{l!2^l}  \frac{(-1)^{k+l+q+1}2^{2q}l}{(2 q-k+l)!(k-l-q)!}\frac{(k-1)!}{(k-q)!}Q^{k-1-q}A^{2 q-k+l}B^{k-l+1}\\
    &+
    C\sum_{l=1}^{k}\sum_{q \geq \frac{k-l}{2}}^{k-l}\frac{(k+l)!}{l!2^l} \frac{(-1)^{k+l+q+1}2^{2q}}{(2 q-k+l)!(k-l-q)!}\frac{(k-1)!}{(k-q)!}(k-l-q)Q^{k-1-q}A^{2 q-k+l}B^{k-l+1}\\
    &+C\sum_{q \geq \frac{k}{2}}^{k}  \frac{(-1)^{k+q+1}k!2^{2q}}{(2 q-k)!(k-q)!}\frac{(k-1)!}{(k-q)!}(k-q)Q^{k-1-q}A^{2 q-k}B^{k+1}\\
    =&
    C\underbrace{\sum_{l=1}^{k}\sum_{q \geq \frac{k-l}{2}}^{k-l}\frac{(k+l)!}{l!2^l}  \frac{(-1)^{k+l+q+1}2^{2q}l}{(2 q-k+l)!(k-l-q)!}\frac{(k-1)!}{(k-q)!}Q^{k-1-q}A^{2 q-k+l}B^{k-l+1}}_{M}\\
    &+
    \underbrace{C\sum_{l=0}^{k}\sum_{q \geq \frac{k-l}{2}}^{k-l}\frac{(k+l)!}{l!2^l} \frac{(-1)^{k+l+q+1}2^{2q}}{(2 q-k+l)!(k-l-q)!}\frac{(k-1)!}{(k-q)!}(k-l-q)Q^{k-1-q}A^{2 q-k+l}B^{k-l+1}}_{E_{1}}\\
      =&:CM+E_{1}.\label{M:exppp}
\end{align}
\subsection*{Expressing $T_{2}$ as $ T_{2}:=-2M+E_{2}$}
\begin{align*}
    T_2:=\sum_{l=0}^{k-1}\sum_{q \geq \frac{k-l-1}{2}}^{k-l-1}\frac{(k+l)!}{l!2^l}  \frac{(-1)^{k+l+q+1}2^{2q+1}}{(2 q-k+l+1)!(k-l-1-q)!}\frac{(k-1)!}{(k-1-q)!}Q^{k-1-q}A^{2 q-k+l+1}B^{k-l}.
\end{align*}
We can write $T_{2}=-2M+E_{2}$ where $M$ is given by \eqref{M:exppp} and $E_{2}:=T_{2}+2M$ which can be simplified to obtain
      \begin{align*}\label{E2:exp}
        E_2:=&-\sum_{l=1}^{k-1}\sum_{q \geq \frac{k-l}{2}}^{k-l}\frac{(k+l-1)!(-1)^{k+l+q}2^{2q+1}(k-1)!}{(l-1)!(2 q-k+l)!(k-l-q)!(k-q)!2^{l}}(2q-k+l)Q^{k-1-q}A^{2 q-k+l}B^{k-l+1}.
    \end{align*}
    \subsection*{Expressing $T_{3}$ as $ T_{3}:=BM+E_{3}$} 
    \begin{equation}
         T_{3}=\sum_{l=0}^{k-1}\sum_{q\geq \frac{k-l-1}{2}}^{k-l}\frac{(k-1+l)!}{(k-1-l)!l!2^{l+1}}\frac{(-1)^{k+l+q}(k-l+1)!2^{2q}}{(2q-k+l+1)!(k-l-q)!}\frac{(k-1)!}{(k-1-q)!}Q^{k-1-q}A^{2 q-k+l+1}B^{k-l+1}.
    \end{equation}
    We can write $T_{3}=BM+E_{3}$ where $M$ is given by \eqref{M:exppp} and $E_{3}:=T_{3}-BM$ which can be simplified to obtain
   \begin{multline*}
        E_{3}= \sum_{l=2}^{k}\sum_{q\geq \frac{k-l}{2}}^{k-l+1}\frac{(k+l-2)!(-1)^{k+l+q}2^{2q-l}(k-1)!\left[(3k+l-2q-2kl-4kq+3k^2-l^2)\right]}{(l-2)!(2q-k+l)!(k-l-q+1)!(k-q)!}\\ \times Q^{k-1-q}A^{2 q-k+l}B^{k-l+2}.
   \end{multline*}
\subsection*{Showing $E_{1}+E_{2}+E_{3}\equiv 0$}
Since $Q=4A-A^{2}-4BC$, substituting $BC=\frac{1}{4}\left(4A-A^{2}-Q\right)$ in $E_{1}$ given by \eqref{M:exppp}, we obtain
\begin{align*}
     E_{1}&= \sum_{l=0}^{k}\sum_{q \geq \frac{k-l}{2}}^{k-l}\frac{(k+l)!}{l!2^l} \frac{(-1)^{k+l+q+1}2^{2q}}{(2 q-k+l)!(k-l-q)!}\frac{(k-1)!}{(k-q)!}(k-l-q)Q^{k-1-q}A^{2 q-k+l+1}B^{k-l}\\
     &+\sum_{l=0}^{k}\sum_{q \geq \frac{k-l}{2}}^{k-l}\frac{(k+l)!}{l!2^l} \frac{(-1)^{k+l+q}2^{2q-2}}{(2 q-k+l)!(k-l-q)!}\frac{(k-1)!}{(k-q)!}(k-l-q)Q^{k-1-q}A^{2 q-k+l+2}B^{k-l}\\
     &+\sum_{l=0}^{k}\sum_{q \geq \frac{k-l}{2}}^{k-l}\frac{(k+l)!}{l!2^l} \frac{(-1)^{k+l+q}2^{2q-2}}{(2 q-k+l)!(k-l-q)!}\frac{(k-1)!}{(k-q)!}(k-l-q)Q^{k-q}A^{2 q-k+l}B^{k-l}.
\end{align*}
Noting that the term corresponds to $l=k$ vanishes, therefore reducing the limits of $k$ to $k-1$.
Replace $l$ by $l-1$ in first term, $l$ by $l-2$ in 2nd term, $q$ by $q+1$ and $l$ by $l-2$ in 3rd term
\begin{align*}
     E_{1}&= \sum_{l=1}^{k-1}\sum_{q \geq \frac{k-l+1}{2}}^{k-l+1}\frac{(k+l-1)!(-1)^{k+l+q}2^{2q}(k-1)!(k-l+1-q)}{(l-1)!2^{l-1}(2 q-k+l-1)!(k-l+1-q)!(k-q)!}Q^{k-1-q}A^{2 q-k+l}B^{k-l+1}\\
     &+\sum_{l=2}^{k+2}\sum_{q \geq \frac{k-l+2}{2}}^{k-l+2} \frac{(k+l-2)!(-1)^{k+l+q}2^{2q-l}(k-1)!(k-l+2-q)}{(l-2)!(2 q-k+l-2)!(k-l+2-q)!(k-q)!}Q^{k-1-q}A^{2 q-k+l}B^{k-l+2}\\
     &+\sum_{l=2}^{k+2}\sum_{q \ge \frac{k-l}{2}}^{k-l+1}\frac{(k+l-2)!(-1)^{k+l+q-1}2^{2q-l+2}(k-1)!(k-l-q+1)}{(l-2)!(2q-k+l)!\,(k-l-q+1)!(k-q-1)!}
Q^{k-q-1}A^{2q-k+l}B^{k-l+2}.
\end{align*}
Note that first term of $E_{1}$ vanishes for $q=k-l+1$ and lower limit of $q$ can be lowers down to $\frac{k-l}{2}$ by multilpying it by $2q-k+l$. Also note that 2nd term vanishes for $q=k-l+2$ and lower limit can be lowers down. Hence we get
\begin{align*}
     E_{1}&= \sum_{l=1}^{k-1}\sum_{q \geq \frac{k-l}{2}}^{k-l}\frac{(k+l-1)!(-1)^{k+l+q}2^{2q-l+1}(k-1)!(2q-k+l)}{(l-1)!(2 q-k+l)!(k-l-q)!(k-q)!}Q^{k-1-q}A^{2 q-k+l}B^{k-l+1}\\
     &+\sum_{l=2}^{k+2}\sum_{q \geq \frac{k-l}{2}}^{k-l+1} \frac{(k+l-2)!(-1)^{k+l+q}2^{2q-l}(k-1)!(2q-k+l-1)(2q-k+l)}{(l-2)!(2 q-k+l)!(k-l-q+1)!(k-q)!}Q^{k-1-q}A^{2 q-k+l}B^{k-l+2}\\
     &+\sum_{l=2}^{k+2}\sum_{q \ge \frac{k-l}{2}}^{k-l+1}\frac{(k+l-2)!(-1)^{k+l+q-1}2^{2q-l+2}(k-1)!(k-l-q+1)}{(l-2)!(2q-k+l)!(k-l-q+1)!(k-q-1)!}
Q^{k-q-1}A^{2q-k+l}B^{k-l+2}.
\end{align*}
    Adding $E_{1}$ and $E_{2}$, we have
    \begin{align*}
    & E_{1}+E_{2}\\=& \sum_{l=2}^{k+2}\sum_{q \geq \frac{k-l}{2}}^{k-l+1}\frac{(k+l-2)!(-1)^{k+l+q}2^{2q-l}(k-1)!\left[(2q-k+l-1)(2q-k+l)-
4(k-l-q+1)(k-q)\right]}{(l-2)!(2 q-k+l)!(k-l-q+1)!(k-q)!} \\
&\times Q^{k-q-1}A^{2q-k+l}B^{k-l+2}\\
=& -\sum_{l=2}^{k+2}\sum_{q \geq \frac{k-l}{2}}^{k-l+1}\frac{(k+l-2)!(-1)^{k+l+q}2^{2q-l}(k-1)!}{(l-2)!(2 q-k+l)!(k-l-q+1)!(k-q)!} \left[(3k+l-2q-2kl-4kq+3k^2-l^2)\right]\\
&\times Q^{k-q-1}A^{2q-k+l}B^{k-l+2}.
\end{align*}
Now consider $E_{1}+E_{2}+E_{3}$, we get
\begin{multline*}
    E_{1}+E_{2}+E_{3}
= -\sum_{l=k+1}^{k+2}\sum_{q \geq \frac{k-l}{2}}^{k-l+1}\frac{(k+l-2)!(-1)^{k+l+q}2^{2q-l}(k-1)!\left[(3k+l-2q-2kl-4kq+3k^2-l^2)\right]}{(l-2)!(2 q-k+l)!(k-l-q+1)!(k-q)!}\\ \times Q^{k-q-1}A^{2q-k+l}B^{k-l+2}.
\end{multline*}
It is easy to check that $(3k+l-2q-2kl-4kq+3k^2-l^2)=0$ for $(l,q)=(k+1,0)$ and $(l,q)=(k+2,q=-1)$, this gives $E_{1}+E_{2}+E_{3}\equiv 0.$ Hence $F(t,u)\equiv0.$
\end{proof}
\section{Acknowledgments}
PC acknowledges the support of the Department of Atomic Energy,  Government of India, for PhD Fellowship.

VPK would like to thank the Isaac Newton Institute for Mathematical Sciences, Cambridge, UK, for support and hospitality during \emph{Rich and Nonlinear Tomography - a multidisciplinary approach} in 2023 (supported by EPSRC Grant Number EP/R014604/1).
Additionally, VPK acknowledges the support of the Department of Atomic Energy,  Government of India, under
Project No.  12-R\&D-TFR-5.01-0520.

AT acknowledges the supported by the Anusandhan National Research Foundation (ANRF), under the scheme National
Post-Doctoral Fellowship, file no. PDF/2025/005843.
\bibliographystyle{plain}
\bibliography{references}

@article{chatterjee2026unifiedrangecharacterizationspherical,
      title={A Unified Range Characterization for the Spherical mean transform}, 
      author={Pradipta Chatterjee and Nisha Singhal and Abhilash Tushir},
      year={2026},
      Journal={arXiv.2605.27034},
      archivePrefix={arXiv},
      primaryClass={math.AP},
      url={https://arxiv.org/abs/2605.27034}, 
}

@book {Atk:book,
    AUTHOR = {Atkinson, Kendall and Han, Weimin},
     TITLE = {Spherical harmonics and approximations on the unit sphere: an
              introduction},
    SERIES = {Lecture Notes in Mathematics},
    VOLUME = {2044},
 PUBLISHER = {Springer, Heidelberg},
      YEAR = {2012},
     PAGES = {x+244},
      ISBN = {978-3-642-25982-1},
   MRCLASS = {41-02 (33C55 41A30 41A63 42A10)},
  MRNUMBER = {2934227},
MRREVIEWER = {Feng\ Dai},
       DOI = {10.1007/978-3-642-25983-8},
       URL = {https://doi.org/10.1007/978-3-642-25983-8},
}

@article {whitney43,
    AUTHOR = {Whitney, Hassler},
     TITLE = {Differentiable even functions},
   JOURNAL = {Duke Math. J.},
  FJOURNAL = {Duke Mathematical Journal},
    VOLUME = {10},
      YEAR = {1943},
     PAGES = {159--160},
      ISSN = {0012-7094,1547-7398},
   MRCLASS = {27.0X},
  MRNUMBER = {7783},
MRREVIEWER = {A.\ E.\ Taylor},
       URL = {http://projecteuclid.org/euclid.dmj/1077471799},
}

@article{Finch_2006,
doi = {10.1088/0266-5611/22/3/012},
url = {https://doi.org/10.1088/0266-5611/22/3/012},
year = {2006},
month = {may},
publisher = {},
volume = {22},
number = {3},
pages = {923},
author = {Finch, David and Rakesh},
title = {The range of the spherical mean value operator for functions supported in a ball},
journal = {Inverse Problems}
}

@book{Gabor,
    AUTHOR = {Szeg\H o, G\'abor},
     TITLE = {Orthogonal polynomials},
    SERIES = {American Mathematical Society Colloquium Publications},
    VOLUME = {Vol. XXIII},
   EDITION = {Fourth},
 PUBLISHER = {American Mathematical Society, Providence, RI},
      YEAR = {1975},
     PAGES = {xiii+432},
   MRCLASS = {42A52 (33A65)},
  MRNUMBER = {372517},
}

@article {AQ2,
    AUTHOR = {Agranovsky, Mark L. and Quinto, Eric Todd},
     TITLE = {Stationary sets for the wave equation in crystallographic
              domains},
   JOURNAL = {Trans. Amer. Math. Soc.},
  FJOURNAL = {Transactions of the American Mathematical Society},
    VOLUME = {355},
      YEAR = {2003},
    NUMBER = {6},
     PAGES = {2439--2451},
      ISSN = {0002-9947,1088-6850},
   MRCLASS = {35L05 (35B05 35L20 35S30)},
  MRNUMBER = {1973997},
MRREVIEWER = {R.\ G.\ Airapetyan},
       DOI = {10.1090/S0002-9947-03-03228-8},
       URL = {https://doi.org/10.1090/S0002-9947-03-03228-8},
}

@article {AQ3,
    AUTHOR = {Agranovsky, Mark L. and Quinto, Eric Todd},
     TITLE = {Geometry of stationary sets for the wave equation in {$\mathbb{
              R^n}$}: the case of finitely supported initial data},
   JOURNAL = {Duke Math. J.},
  FJOURNAL = {Duke Mathematical Journal},
    VOLUME = {107},
      YEAR = {2001},
    NUMBER = {1},
     PAGES = {57--84},
      ISSN = {0012-7094,1547-7398},
   MRCLASS = {35L05 (35L10 35S30 44A12)},
  MRNUMBER = {1815250},
MRREVIEWER = {Victor\ V.\ Zharinov},
       DOI = {10.1215/S0012-7094-01-10714-X},
       URL = {https://doi.org/10.1215/S0012-7094-01-10714-X},
}

@article{Linh,
author={Linh V. Nguyen},
	title = {Range description for a spherical mean transform on spaces of constant curvature},
	journal = {JJAMA},
	volume = {128},
	number = {},
	pages = {191–214},
	year = {2016},
	
}

@article {AN,
    AUTHOR = {Agranovsky, Mark and Nguyen, Linh V.},
     TITLE = {Range conditions for a spherical mean transform and global
              extendibility of solutions of the {D}arboux equation},
   JOURNAL = {J. Anal. Math.},
  FJOURNAL = {Journal d'Analyse Math\'{e}matique},
    VOLUME = {112},
      YEAR = {2010},
     PAGES = {351--367},
      ISSN = {0021-7670},
   MRCLASS = {35L20 (35A01 44A15)},
  MRNUMBER = {2763005},
MRREVIEWER = {Chong Kyu Han},
       DOI = {10.1007/s11854-010-0033-0},
       URL = {https://doi.org/10.1007/s11854-010-0033-0},
}

@article {Clark1999,
    AUTHOR = {Clarkson, Eric},
     TITLE = {Projections onto the range of the exponential {R}adon
              transform and reconstruction algorithms},
   JOURNAL = {Inverse Problems},
  FJOURNAL = {Inverse Problems. An International Journal on the Theory and
              Practice of Inverse Problems, Inverse Methods and Computerized
              Inversion of Data},
    VOLUME = {15},
      YEAR = {1999},
    NUMBER = {2},
     PAGES = {563--571},
      ISSN = {0266-5611,1361-6420},
   MRCLASS = {44A12},
  MRNUMBER = {1684475},
MRREVIEWER = {Aleksander\ Denisiuk},
       DOI = {10.1088/0266-5611/15/2/014},
       URL = {https://doi.org/10.1088/0266-5611/15/2/014},
}

@article {DK,
    AUTHOR = {Do, N. and Kunyansky, L.},
     TITLE = {Theoretically exact photoacoustic reconstruction from
              spatially and temporally reduced data},
   JOURNAL = {Inverse Problems},
  FJOURNAL = {Inverse Problems. An International Journal on the Theory and
              Practice of Inverse Problems, Inverse Methods and Computerized
              Inversion of Data},
    VOLUME = {34},
      YEAR = {2018},
    NUMBER = {9},
     PAGES = {094004, 26},
      ISSN = {0266-5611,1361-6420},
   MRCLASS = {65M32 (35L15 44A12)},
  MRNUMBER = {3830143},
MRREVIEWER = {Natesan\ Barani Balan},
       DOI = {10.1088/1361-6420/aacfac},
       URL = {https://doi.org/10.1088/1361-6420/aacfac},
}

@article {Kuchment-Kunyansky-TAT,
    AUTHOR = {Kuchment, Peter and Kunyansky, Leonid},
     TITLE = {Mathematics of thermoacoustic tomography},
   JOURNAL = {European J. Appl. Math.},
  FJOURNAL = {European Journal of Applied Mathematics},
    VOLUME = {19},
      YEAR = {2008},
    NUMBER = {2},
     PAGES = {191--224},
      ISSN = {0956-7925,1469-4425},
   MRCLASS = {92C55 (35L20 44A12 76Q05 78A70 80A99)},
  MRNUMBER = {2400720},
       DOI = {10.1017/S0956792508007353},
       URL = {https://doi.org/10.1017/S0956792508007353},
}

@incollection {Nat83,
    AUTHOR = {Natterer, Frank},
     TITLE = {Exploiting the ranges of {R}adon transforms in tomography},
 BOOKTITLE = {Numerical treatment of inverse problems in differential and
              integral equations ({H}eidelberg, 1982)},
    SERIES = {Wissenschaft und Kultur},
     PAGES = {290--303},
 PUBLISHER = {Birkh\"auser, Boston, MA},
      YEAR = {1983},
      ISBN = {3-7643-3125-9},
   MRCLASS = {44A15 (53C65 92A07)},
  MRNUMBER = {714577},
}

@article{SKPatch_2004,
doi = {10.1088/0031-9155/49/11/013},
url = {https://doi.org/10.1088/0031-9155/49/11/013},
year = {2004},
month = {may},
publisher = {},
volume = {49},
number = {11},
pages = {2305},
author = {S K Patch},
title = {Thermoacoustic tomography—consistency conditions and the partial scan problem},
journal = {Physics in Medicine \& Biology}
}

@article{Anastasio2001FBP,
  author    = {Mark A. Anastasio and Xiaochuan Pan and Eric Clarkson},
  title     = {Comments on the Filtered Backprojection Algorithm, Range Conditions, and the Pseudoinverse Solution},
  journal   = {IEEE Transactions on Medical Imaging},
  volume    = {20},
  pages     = {539--542},
  year      = {2001},
  doi       = {10.1109/42.929620},
  issn      = {0278-0062},
  publisher = {IEEE}
}

@article{AAKN1,
title = {A simple range characterization for spherical mean transform in odd dimensions and its applications},
journal = {Inverse Problems and Imaging},
pages = {},
year = {2026},
issn = {1930-8337},
doi = {10.3934/ipi.2026047},
url = {https://www.aimsciences.org/article/id/6a672fd4772a870addfe142c},
author = {Divyansh Agrawal and Gaik Ambartsoumian and Venkateswaran P. Krishnan and Nisha Singhal}
}

@article {R,
    AUTHOR = {Rubin, Boris},
     TITLE = {Inversion formulae for the spherical mean in odd dimensions
              and the {E}uler-{P}oisson-{D}arboux equation},
   JOURNAL = {Inverse Problems},
  FJOURNAL = {Inverse Problems. An International Journal on the Theory and
              Practice of Inverse Problems, Inverse Methods and Computerized
              Inversion of Data},
    VOLUME = {24},
      YEAR = {2008},
    NUMBER = {2},
     PAGES = {025021, 10},
      ISSN = {0266-5611},
   MRCLASS = {44A12 (26A33 35Q15 35R30)},
  MRNUMBER = {2408558},
MRREVIEWER = {Dmitry G. Shepelsky},
       DOI = {10.1088/0266-5611/24/2/025021},
       URL = {https://doi.org/10.1088/0266-5611/24/2/025021},
}

@article {K,
    AUTHOR = {Kunyansky, Leonid A.},
     TITLE = {Explicit inversion formulae for the spherical mean {R}adon
              transform},
   JOURNAL = {Inverse Problems},
  FJOURNAL = {Inverse Problems. An International Journal on the Theory and
              Practice of Inverse Problems, Inverse Methods and Computerized
              Inversion of Data},
    VOLUME = {23},
      YEAR = {2007},
    NUMBER = {1},
     PAGES = {373--383},
      ISSN = {0266-5611},
   MRCLASS = {44A12 (65R10)},
  MRNUMBER = {2302980},
MRREVIEWER = {Fritz Keinert},
       DOI = {10.1088/0266-5611/23/1/021},
       URL = {https://doi.org/10.1088/0266-5611/23/1/021},
}

@article{ref:AmbKuch,
author = {Gaik Ambartsoumian and Peter Kuchment},
title = {{On the injectivity of the circular Radon transform}},
journal =  {Inverse Problems},
volume = 21,
pages = {473--485}, year = 2005}

@article{ref:AmbKuch-range,
author = {Gaik Ambartsoumian and Peter Kuchment},
title = {{A range description for the planar circular Radon
transform}},
journal =  {SIAM J. Math. Anal.},
volume = 38,
number = 2,
pages = {681--692},
year = 2006}

@article{And,
key = {And},
author = {Lars-Erik Andersson},
title = {{On the determination of a function from
spherical averages}},
journal = {SIAM J.  Math.  Anal.},
volume =  19,
year = 1988,
pages = {214-232},
}

@article{KK,
author={Kuchment, Peter and Kunyansky, Leonid},
TITLE={Range description for the free space wave operator
and the spherical means transform},
NOTE={In preparation},
YEAR={2023},
}

@article {AAKN2,
    AUTHOR = {Agrawal, Divyansh and Ambartsoumian, Gaik and Krishnan,
              Venkateswaran P. and Singhal, Nisha},
     TITLE = {On the null space of the backprojection operator and {R}ubin's
              conjecture for the spherical mean transform},
   JOURNAL = {Inverse Problems},
  FJOURNAL = {Inverse Problems. An International Journal on the Theory and
              Practice of Inverse Problems, Inverse Methods and Computerized
              Inversion of Data},
    VOLUME = {40},
      YEAR = {2024},
    NUMBER = {12},
     PAGES = {Paper No. 125018, 15},
      ISSN = {0266-5611,1361-6420},
   MRCLASS = {44A12 (26A33)},
  MRNUMBER = {4844592},
       DOI = {10.1088/1361-6420/ad8fc8},
       URL = {https://doi.org/10.1088/1361-6420/ad8fc8},
}

@article{Finch-P-R,
  author =   {David Finch and Sarah Patch and Rakesh},
  title =    {Determining a function from its mean values over
                  a family of spheres},
year =   {2004},
journal = {SIAM J. Math. Anal.},
volume =  {35},
pages = {1213--1240},
}

@book {Palamodov2016,
    AUTHOR = {Palamodov, Victor},
     TITLE = {Reconstruction from integral data},
    SERIES = {Monographs and Research Notes in Mathematics},
 PUBLISHER = {CRC Press, Boca Raton, FL},
      YEAR = {2016},
     PAGES = {xi+169},
      ISBN = {978-1-4987-1010-7},
   MRCLASS = {44A12 (82D25 92C55)},
  MRNUMBER = {3618153},
MRREVIEWER = {Aleksander\ Denisiuk},
       DOI = {10.1201/b19575},
       URL = {https://doi.org/10.1201/b19575},
}

@incollection {Agronovasky96,
    AUTHOR = {Agranovsky, Mark L. and Quinto, Eric Todd},
     TITLE = {Injectivity of the spherical mean operator and related
              problems},
 BOOKTITLE = {Complex analysis, harmonic analysis and applications
              ({B}ordeaux, 1995)},
    SERIES = {Pitman Res. Notes Math. Ser.},
    VOLUME = {347},
     PAGES = {12--36},
 PUBLISHER = {Longman, Harlow},
      YEAR = {1996},
      ISBN = {0-582-28698-0},
   MRCLASS = {44A12},
  MRNUMBER = {1402020},
MRREVIEWER = {\'Arp\'ad\ Kurusa},
}

@article {Denisjuk1999,
    AUTHOR = {Denisjuk, Alexander},
     TITLE = {Integral geometry on the family of semi-spheres},
   JOURNAL = {Fract. Calc. Appl. Anal.},
  FJOURNAL = {Fractional Calculus \& Applied Analysis. An International
              Journal for Theory and Applications},
    VOLUME = {2},
      YEAR = {1999},
    NUMBER = {1},
     PAGES = {31--46},
      ISSN = {1311-0454},
   MRCLASS = {53C65},
  MRNUMBER = {1679627},
}

@article {Agranovsky:1999,
    AUTHOR = {Agranovsky, M. L. and Volchkov, V. V. and Zalcman, L. A.},
     TITLE = {Conical uniqueness sets for the spherical {R}adon transform},
   JOURNAL = {Bull. London Math. Soc.},
  FJOURNAL = {The Bulletin of the London Mathematical Society},
    VOLUME = {31},
      YEAR = {1999},
    NUMBER = {2},
     PAGES = {231--236},
      ISSN = {0024-6093,1469-2120},
   MRCLASS = {44A12},
  MRNUMBER = {1664137},
       DOI = {10.1112/S0024609398005396},
       URL = {https://doi.org/10.1112/S0024609398005396},
}

@Article{Ambartsoumian2018,
  author   = {Ambartsoumian, Gaik and Gouia-Zarrad, Rim and Krishnan, Venkateswaran P. and Roy, Souvik},
  title    = {Image reconstruction from radially incomplete spherical {R}adon data},
  journal  = {European J. Appl. Math.},
  year     = {2018},
  volume   = {29},
  number   = {3},
  pages    = {470--493},
  issn     = {0956-7925},
  doi      = {10.1017/S0956792517000250},
  mrnumber = {3788452},
}

@InCollection{Ambartsoumian2015,
  author    = {Ambartsoumian, Gaik and Krishnan, Venkateswaran P.},
  title     = {Inversion of a class of circular and elliptical {R}adon transforms},
  booktitle = {Complex analysis and dynamical systems {VI}. {P}art 1},
  publisher = {Amer. Math. Soc., Providence, RI},
  year      = {2015},
  volume    = {653},
  series    = {Contemp. Math.},
  pages     = {1--12},
  doi       = {10.1090/conm/653/13174},
  mrnumber  = {3453064},
}

@book {CH_Book,
    AUTHOR = {Courant, Richard and Hilbert, David},
     TITLE = {Methods of Mathematical Physics. {V}ol. {II}},
    SERIES = {Wiley Classics Library},
      NOTE = {Partial differential equations,
              Reprint of the 1962 original,
              A Wiley-Interscience Publication},
 PUBLISHER = {John Wiley \& Sons, Inc., New York},
      YEAR = {1989},
     PAGES = {xxii+830},
      ISBN = {0-471-50439-4},
   MRCLASS = {35-00 (00A05 01A75)},
  MRNUMBER = {1013360},
}

@book {Natterer_book,
    AUTHOR = {Natterer, Frank},
     TITLE = {The Mathematics of Computerized Tomography},
    SERIES = {Classics in Applied Mathematics},
    VOLUME = {32},
      NOTE = {Reprint of the 1986 original},
 PUBLISHER = {Society for Industrial and Applied Mathematics (SIAM),
              Philadelphia, PA},
      YEAR = {2001},
     PAGES = {xviii+222},
      ISBN = {0-89871-493-1},
   MRCLASS = {00A69 (44A12 65R10 68U99 92C55)},
  MRNUMBER = {1847845},
MRREVIEWER = {Fritz Keinert},
       DOI = {10.1137/1.9780898719284},
       URL = {https://doi.org/10.1137/1.9780898719284},
}

@article {Agranovsky-Kuchment-Quinto,
    AUTHOR = {Agranovsky, Mark and Kuchment, Peter and Quinto, Eric Todd},
     TITLE = {Range descriptions for the spherical mean {R}adon transform},
   JOURNAL = {J. Funct. Anal.},
  FJOURNAL = {Journal of Functional Analysis},
    VOLUME = {248},
      YEAR = {2007},
    NUMBER = {2},
     PAGES = {344--386},
      ISSN = {0022-1236},
   MRCLASS = {47G10 (43A85 44A12 94A12)},
  MRNUMBER = {2335579},
MRREVIEWER = {Keisaku Kumahara},
       DOI = {10.1016/j.jfa.2007.03.022},
       URL = {https://doi.org/10.1016/j.jfa.2007.03.022},
}

@article {Finch-Haltmeir-Rakesh_even-inversion,
    AUTHOR = {Finch, David and Haltmeier, Markus and Rakesh},
     TITLE = {Inversion of spherical means and the wave equation in even
              dimensions},
   JOURNAL = {SIAM J. Appl. Math.},
  FJOURNAL = {SIAM Journal on Applied Mathematics},
    VOLUME = {68},
      YEAR = {2007},
    NUMBER = {2},
     PAGES = {392--412},
      ISSN = {0036-1399,1095-712X},
   MRCLASS = {35R30 (35L05 65R32 92C55)},
  MRNUMBER = {2366991},
MRREVIEWER = {Alexey\ V.\ Borovskikh},
       DOI = {10.1137/070682137},
       URL = {https://doi.org/10.1137/070682137},
}

@article {Ambartsoumian-Zarrad-Lewis,
    AUTHOR = {Ambartsoumian, Gaik and Gouia-Zarrad, Rim and Lewis, Matthew
              A.},
     TITLE = {Inversion of the circular {R}adon transform on an annulus},
   JOURNAL = {Inverse Problems},
  FJOURNAL = {Inverse Problems. An International Journal on the Theory and
              Practice of Inverse Problems, Inverse Methods and Computerized
              Inversion of Data},
    VOLUME = {26},
      YEAR = {2010},
    NUMBER = {10},
     PAGES = {105015, 11},
      ISSN = {0266-5611,1361-6420},
   MRCLASS = {44A12 (65R99 92C55)},
  MRNUMBER = {2719776},
MRREVIEWER = {H.\ S. P. Shrivastava},
       DOI = {10.1088/0266-5611/26/10/105015},
       URL = {https://doi.org/10.1088/0266-5611/26/10/105015},
}

@article{Salman:2014,
title = {An inversion formula for the spherical mean transform with data on an ellipsoid in two and three dimensions},
journal = {Journal of Mathematical Analysis and Applications},
volume = {420},
number = {1},
pages = {612-620},
year = {2014},
issn = {0022-247X},
doi = {https://doi.org/10.1016/j.jmaa.2014.05.007},
url = {https://www.sciencedirect.com/science/article/pii/S0022247X14004429},
author = {Yehonatan Salman}
}

@article {Salman,
    AUTHOR = {Salman, Yehonatan},
     TITLE = {Recovering functions from the spherical mean transform with
              limited radii data by expansion into spherical harmonics},
   JOURNAL = {J. Math. Anal. Appl.},
  FJOURNAL = {Journal of Mathematical Analysis and Applications},
    VOLUME = {465},
      YEAR = {2018},
    NUMBER = {1},
     PAGES = {331--347},
      ISSN = {0022-247X,1096-0813},
   MRCLASS = {33C55},
  MRNUMBER = {3806707},
MRREVIEWER = {Tetiana\ A.\ Stepanyuk},
       DOI = {10.1016/j.jmaa.2018.05.019},
       URL = {https://doi.org/10.1016/j.jmaa.2018.05.019},
}

@article {Seeley,
    AUTHOR = {Seeley, R. T.},
     TITLE = {Spherical harmonics},
   JOURNAL = {Amer. Math. Monthly},
  FJOURNAL = {American Mathematical Monthly},
    VOLUME = {73},
      YEAR = {1966},
    NUMBER = {4, part II},
     PAGES = {115--121},
      ISSN = {0002-9890,1930-0972},
   MRCLASS = {33.27},
  MRNUMBER = {201695},
MRREVIEWER = {Ram\ Kishore\ Saxena},
       DOI = {10.2307/2313760},
       URL = {https://doi.org/10.2307/2313760},
}

@article {Eknaraynan2007,
    AUTHOR = {Narayanan, E. K. and Rawat, R. and Ray, S. K.},
     TITLE = {Approximation by {$K$}-finite functions in {$L^p$} spaces},
   JOURNAL = {Israel J. Math.},
  FJOURNAL = {Israel Journal of Mathematics},
    VOLUME = {161},
      YEAR = {2007},
     PAGES = {187--207},
      ISSN = {0021-2172,1565-8511},
   MRCLASS = {43A85 (46E30)},
  MRNUMBER = {2350162},
MRREVIEWER = {Mark\ Agranovsky},
       DOI = {10.1007/s11856-007-0078-7},
       URL = {https://doi.org/10.1007/s11856-007-0078-7},
}

@article {EKnarayanan2009,
    AUTHOR = {Narayanan, E. K. and Rawat, R. and Ray, S. K.},
     TITLE = {Approximation by {$K$}-finite functions on {$L^p$} spaces},
   JOURNAL = {J. Anal.},
  FJOURNAL = {The Journal of Analysis},
    VOLUME = {17},
      YEAR = {2009},
     PAGES = {61--66},
      ISSN = {0971-3611,2367-2501},
   MRCLASS = {43A85 (46E30)},
  MRNUMBER = {2722602},
MRREVIEWER = {Radu\ Miculescu},
}

@article {Rama2009,
    AUTHOR = {Rawat, Rama and Srivastava, R. K.},
     TITLE = {Twisted spherical means in annular regions in {$\Bbb C^n$} and
              support theorems},
   JOURNAL = {Ann. Inst. Fourier (Grenoble)},
  FJOURNAL = {Universit\'e{} de Grenoble. Annales de l'Institut Fourier},
    VOLUME = {59},
      YEAR = {2009},
    NUMBER = {6},
     PAGES = {2509--2523},
      ISSN = {0373-0956,1777-5310},
   MRCLASS = {32A50 (43A85 44A35)},
  MRNUMBER = {2640928},
MRREVIEWER = {E.\ K.\ Narayanan},
       DOI = {10.5802/aif.2498},
       URL = {https://doi.org/10.5802/aif.2498},
}

@article {Rama2011,
    AUTHOR = {Rawat, Rama and Srivastava, R. K.},
     TITLE = {Spherical means in annular regions in the {$n$}-dimensional
              real hyperbolic spaces},
   JOURNAL = {Proc. Indian Acad. Sci. Math. Sci.},
  FJOURNAL = {Indian Academy of Sciences. Proceedings. Mathematical
              Sciences},
    VOLUME = {121},
      YEAR = {2011},
    NUMBER = {3},
     PAGES = {311--325},
      ISSN = {0253-4142,0973-7685},
   MRCLASS = {42C15 (26E60 43A90)},
  MRNUMBER = {2867985},
MRREVIEWER = {W.\ \.Zelazko},
       DOI = {10.1007/s12044-011-0037-4},
       URL = {https://doi.org/10.1007/s12044-011-0037-4},
}

@book {John-book,
    AUTHOR = {John, Fritz},
     TITLE = {Plane Waves and Spherical Means Applied to Partial
              Differential Equations},
      NOTE = {Reprint of the 1955 original},
 PUBLISHER = {Dover Publications, Inc., Mineola, NY},
      YEAR = {2004},
     PAGES = {iv+172},
      ISBN = {0-486-43804-X},
   MRCLASS = {35-02},
  MRNUMBER = {2098409},
}

@article{CKT,
    AUTHOR = {Chatterjee, Pradipta and Krishnan, Venkateswaran P. and
              Tushir, Abhilash},
     TITLE = {Inversion of spherical mean transform in odd dimensions with
              partial data via system of {ODE}s},
   JOURNAL = {Anal. Math. Phys.},
  FJOURNAL = {Analysis and Mathematical Physics},
    VOLUME = {16},
      YEAR = {2026},
    NUMBER = {5},
     PAGES = {Paper No. 123},
      ISSN = {1664-2368,1664-235X},
   MRCLASS = {33C55 (35R30 44A12 44A15 44A20)},
  MRNUMBER = {5128561},
       DOI = {10.1007/s13324-026-01267-7},
       URL = {https://doi.org/10.1007/s13324-026-01267-7},
}

@article {Rhee,
    AUTHOR = {Rhee, Haewun},
     TITLE = {A representation of the solutions of the {D}arboux equation in
              odd-dimensional spaces},
   JOURNAL = {Trans. Amer. Math. Soc.},
  FJOURNAL = {Transactions of the American Mathematical Society},
    VOLUME = {150},
      YEAR = {1970},
     PAGES = {491--498},
      ISSN = {0002-9947,1088-6850},
   MRCLASS = {35.06},
  MRNUMBER = {262647},
MRREVIEWER = {E.\ J.\ Scott},
       DOI = {10.2307/1995531},
       URL = {https://doi.org/10.2307/1995531},
}

@article {Agranovsky-Finch-Kuchment-range,
    AUTHOR = {Agranovsky, Mark and Finch, David and Kuchment, Peter},
     TITLE = {Range conditions for a spherical mean transform},
   JOURNAL = {Inverse Probl. Imaging},
  FJOURNAL = {Inverse Problems and Imaging},
    VOLUME = {3},
      YEAR = {2009},
    NUMBER = {3},
     PAGES = {373--382},
      ISSN = {1930-8337,1930-8345},
   MRCLASS = {44A12 (92C55)},
  MRNUMBER = {2557910},
MRREVIEWER = {Aleksander\ Denisiuk},
       DOI = {10.3934/ipi.2009.3.373},
       URL = {https://doi.org/10.3934/ipi.2009.3.373},
}

@article{ambartsoumian2014exterior,
  title={Exterior/interior problem for the circular means transform with applications to intravascular imaging},
  author={Ambartsoumian, Gaik and Kunyansky, Leonid},
  journal={Inverse Problems and Imaging},
  volume={8},
  number={2},
  pages={339--359},
  year={2014},
  publisher={Inverse Problems and Imaging}
}

@article{ambartsoumian2015inversion,
  title={Inversion of a class of circular and elliptical {R}adon transforms},
  author={Ambartsoumian, Gaik and Krishnan, Venkateswaran P.},
  journal={Contemporary Mathematics},
  volume={653},
  year={2015},
  pages={1--12}
}

@article{nguyen2009family,
  title={A family of inversion formulas in thermoacoustic tomography},
  author={Nguyen, Linh V.},
  journal={Inverse Problems and Imaging},
  volume={3},
  number={4},
  pages={649--675},
  year={2009},
  publisher={Inverse Problems and Imaging}
}

@article{norton1980reconstruction,
  title={Reconstruction of a two-dimensional reflecting medium over a circular domain: Exact solution},
  author={Norton, Stephen J.},
  journal={The Journal of the Acoustical Society of America},
  volume={67},
  number={4},
  pages={1266--1273},
  year={1980},
  publisher={Acoustical Society of America}
}

@article{aramyan2020recovering,
  title={To recovering the moments from the spherical mean {R}adon transform},
  author={Aramyan, Rafik H. and Mnatsakanov, Robert M.},
  journal={Journal of Mathematical Analysis and Applications},
  volume={490},
  number={2},
  pages={124334},
  year={2020},
  publisher={Elsevier}
}

@article{agranovsky1996approximation,
  title={Approximation by spherical waves in ${L}^p$-spaces},
  author={Agranovsky, Mark and Berenstein, Carlos and Kuchment, Peter},
  journal={J. Geom. Anal.},
  volume={6},
  number={3},
  pages={365--383},
  year={1996},
  publisher={Springer}
}

@article{agranovsky1996injectivity,
  title={Injectivity sets for the {R}adon transform over circles and complete systems of radial functions},
  author={Agranovsky, Mark and Quinto, Eric Todd},
  journal={J. Funct. Anal.},
  volume={139},
  number={2},
  pages={383--414},
  year={1996},
  publisher={Elsevier}
}

@article{Kruger:1995,
author = {Kruger, Robert A. and Liu, Pingyu and Fang, Yuncai and Appledorn, C. Robert},
title = {Photoacoustic ultrasound {(PAUS)}—Reconstruction tomography},
journal = {Medical Physics},
volume = {22},
number = {10},
pages = {1605-1609},
doi = {https://doi.org/10.1118/1.597429},
eprint = {https://aapm.onlinelibrary.wiley.com/doi/pdf/10.1118/1.597429},
year = {1995}
}
  \end{document}